\newcommand{\C}{{\mathbb C}}
\newcommand{\Isol}{\mathrm{Isol}}
\newcommand{\I}{{\mathbb I}}
\newcommand{\D}{{\mathbb D}}
\newcommand{\R}{{\mathbb R}}
\newcommand{\N}{{\mathbb N}}
\renewcommand{\D}{{\mathbb D}}
\newcommand{\M}{\mathcal{M}}
\newcommand{\Homeo}{\operatorname{Homeo}}
\newcommand{\id}{\operatorname{id}}
\renewcommand{\L}{\mathcal{L}}
\renewcommand{\mod}{\mathrm{mod}}
\newcommand{\dto}{\dashrightarrow}
\newcommand{\rto}{\righttoleftarrow}
\newcommand{\T}{\mathcal{T}}
\newcommand{\hide}[1]{}
\newcounter{main}
\theoremstyle{plain}
        \newtheorem{theorem}{Theorem}[section]
        \newtheorem*{theorem*}{Theorem}
        \newtheorem*{conj*}{Conjecture}
        \newtheorem{lemma}[theorem]{Lemma}
        \newtheorem{corollary}[theorem]{Corollary}
        \newtheorem{proposition}[theorem]{Proposition}
        \newtheorem{maintheorem}[main]{Main Theorem}        
\theoremstyle{definition}
        \newtheorem{definition}[theorem]{Definition}
        \newtheorem*{definition*}{Definition}
\theoremstyle{remark}
        \newtheorem{remark}[theorem]{Remark}
        \newtheorem{example}[theorem]{Example}
        \newtheorem*{example*}{Example}
        \newtheorem*{examples*}{Examples}        
        \newtheorem*{claim}{Claim}
        \newtheorem*{claim1}{Claim 1}
\newenvironment{subproof}[1][\proofname]{%
  \begin{proof}[#1]%
}{%
  \end{proof}%
}
\title[Finite and infinite degree Thurston maps with extra marked points]{Finite and infinite degree Thurston maps with extra marked points}
\author{Nikolai Prochorov}
\address {Aix-Marseille Université, CNRS, Institut de Mathématiques de Marseille, 13003 Marseille, France}
\email{nikolai.prochorov@etu.univ-amu.fr, prochorov41@gmail.com}
\date{\today}
\keywords{Thurston maps, postsingularly finite holomorphic maps, Teichm\"uller spaces, moduli spaces, pullback maps, Levy cycles.}
\subjclass[2020]{Primary 37F20; Secondary 37F10, 37F15, 37F34.}
\begin{document}

\begin{abstract}
    We investigate the family of marked Thurston maps that are defined everywhere on the topological sphere $S^2$, potentially excluding at most countable closed set of essential singularities. We show that when an unmarked Thurston map $f$ is realized by a postsingularly finite holomorphic map, the marked Thurston map $(f, A)$, where $A \subset S^2$ is the corresponding finite marked set, admits such a realization if and only if it has no degenerate Levy cycle. To obtain this result, we analyze the associated pullback map $\sigma_{f, A}$ defined on the Teichm\"uller space~$\T_A$ and demonstrate that some of its iterates admit well-behaved invariant complex sub-manifolds within $\T_A$. By applying powerful machinery of one-dimensional complex dynamics and hyperbolic geometry, we gain a clear understanding of the behavior of the map $\sigma_{f, A}$ restricted to the corresponding invariant subset of the Teichm\"uller space~$\T_A$.
\end{abstract}

\maketitle

\vspace{-0.4cm}

\tableofcontents

\newpage

\section{Introduction}

\subsection{Thurston theory for finite and infinite degree maps}\label{subsec: inro thurston theory}
In the one-dimensional rational dynamics, the crucial role is played by the family of \textit{postcritically finite} (or \textit{pcf} in short) rational maps, i.e., maps with all critical points being periodic or pre-periodic. In this context, one of the most influential ideas has been to abstract from the rigid underlying complex structure and consider the more general setup of postcritically finite \emph{branched self-coverings} of the topological $2$-sphere $S^2$. Nowadays, orientation-preserving pcf branched covering maps $f\colon S^2 \to S^2$ of topological degree $\deg(f)\geq 2$ are called \emph{Thurston maps} (\textit{of finite degree}), in honor of William Thurston, who introduced them to deepen the understanding of the dynamics of postcritically finite rational maps~on~$\widehat{\C}$.

These ideas can be extended to the transcendental setting to explore the dynamics of holomorphic maps of infinite degree that are not defined everywhere on the Riemann sphere. In this paper, we focus on \textit{postsingularly finite} (\textit{psf} in short) maps that are holomorphic everywhere on~$\widehat{\C}$ except at the closed at most countable closed set of \textit{essential singularities}. Here, the map is called postsingularly finite if it has finitely many \textit{singular values}, and each of them eventually becomes periodic or lands on an essential singularity under the iteration. We can generalize the notion of a Thurston map to postsingularly finite \textit{topologically holomorphic} non-injective maps $f \colon S^2 - E \to S^2$, where $E \subset S^2$ is at most countable closed set, $f$ does not extend continuously to a neighborhood of any of the points of $E$ and meets a technical condition of being an \textit{admissible type} map; see Sections \ref{subsec: topologically holomorphic maps} and~\ref{subsec: thurston maps}. It is important to note  that the set $E$ is non-empty if and only if $f$ is \textit{transcendental}, meaning that it has infinite topological degree. For simplicity, we will use the notation $f \colon S^2 \dto S^2$ to indicate that the Thurston map $f$, whether finite or infinite degree, might not be defined on at most countable closed set of $S^2$.

For a Thurston map $f \colon S^2 \dto S^2$, the \textit{postsingular set} $P_f$ is defined as the union of all orbits of the singular values of the map $f$. It is important to note that some of these orbits might terminate after several iterations if a singular value reaches a point where the map~$f$ is not defined. The elements of the postsingular set $P_f$ are called the \textit{postsingular values} of the Thurston map $f$. If the map $f$ is defined on the entire sphere $S^2$, we simply refer to its \textit{postcritical set} and \textit{postcritical values}, as the set of singular values of $f$ coincides with the set of its \textit{critical values}.  Two Thurston maps are called \emph{combinatorially} (or \emph{Thurston}) \emph{equivalent} if they are conjugate up to isotopy relative to their postsingular~sets; see Definition \ref{def: comb equiv}.

A fundamental question in this context is whether a given Thurston map $f \colon S^2 \dto S^2$ can be \emph{realized} by a psf holomorphic map with the same combinatorics, that is, if $f \colon S^2 \dto S^2$ is combinatorially equivalent to a psf holomorphic map.  If the Thurston map $f$ is not realized, then we say that $f$ is \textit{obstructed}. William Thurston answered this question for Thurston maps of finite degree in his celebrated \emph{characterization of rational maps}: 
if a finite degree Thurston map $f \colon S^2 \to S^2$ is not a \textit{$(2, 2, 2, 2)$-map}, then $f$ is realized by a pcf rational map if and only if $f$ has no \emph{Thurston obstruction} \cite{DH_Th_char}. Such an obstruction is given by a finite collection of disjoint simple closed curves in $S^2 - P_f$ with certain invariance properties under the map $f$. In many instances, it suffices to restrict to simpler types of Thurston obstructions provided by \textit{Levy cycles}; see \cite[Theorem 10.3.8]{Hubbard_Book_2}, \cite[Proposition 1.1]{sharland}, \cite[Corollary~1.5]{critically_fixed}, or \cite[Theorems 7.6 and 8.6]{park} for examples of such cases. The family of $(2, 2, 2, 2)$-maps that are not covered by Thurston's characterization result consists of all Thurston maps with four postcritical values, where each critical point is \textit{simple} and strictly pre-periodic. 

The same characterization question can be also asked in the transcendental setting. The first breakthrough in this area was obtained in \cite{HSS}, where it was shown that an \textit{exponential} Thurston map is realized if and only if it has no Levy cycle. More recently, the characterization question was resolved for a broad class of Thurston maps with four postsingular values and a single essential singularity in \cite{small_postsingular_set}. Furthermore, the results of \cite{Shemyakov_Thesis} and \cite{our_approx} suggest that a Thurston-like criterion for realizability may hold in a greater generality. However, the characterization question in the transcendental setting remains largely open, as many of the techniques used in Thurston theory for finite degree maps do not extend to this context.

Let $f \colon S^2 \dto S^2$ be a Thurston map and $A$ be a subset of $S^2$. We say that $A$ is \textit{$f$-pseudo-invariant} if the orbit of each point $a \in A$, whether it terminates after several iterations or not, remains within $A$. Clearly, every $f$-invariant set is $f$-pseudo-invariant; however, the converse is not necessarily true, since one of the points in 
$A$ could be an essential singularity of $f$. If $A$ is a finite $f$-pseudo-invariant subset of $S^2$ containing $S_f$ (and therefore, the entire postsingular set $P_f$), we can consider the pair $(f, A)$ called a \textit{marked Thurston map}, where $A$ is referred to as the \textit{marked set}. For clarity in notation, we denote the marked Thurston map $(f, A)$ by $f \colon (S^2, A) \rto$. If no marked set is explicitly mentioned, it is assumed to coincide with the postsingular set $P_f$. Two marked Thurston maps $f_1 \colon (S^2, A_1) \rto$ and $f_2 \colon (S^2, A_2) \rto$ are said to be \textit{combinatorially equivalent} if they are conjugate up to isotopy relative to their marked sets; see Definition \ref{def: comb equiv}. It implies, for instance, that $f_1|A_1$ and $f_2|A_2$ are conjugate. Furthermore, for the unmarked Thurston map $f \colon S^2 \dto S^2$, we can explore whether $f$ is realized with respect to different marked sets. Naturally, the larger the marked set we consider, the finer the notion of combinatorial equivalence becomes. Finally, it is worth noting that the notion of a marked Thurston map naturally emerges even if we focus solely on unmarked Thurston maps. This phenomenon, for instance, arises in \textit{Pilgrim's decomposition theory}; see \cite{Pilgrim_Comb}, \cite{Selinger_Top_Obstr}, and \cite{overview_bartholdi_dudko}.

This leads to the same realizability question as before, but now in the broader context of marked Thurston map. According to \cite{Buff} (see also \cite{MargalitWinarski} for an alternative approach), Thurston's characterization criterion extends directly to the setting of finite degree marked Thurston maps that are not $(2, 2, 2, 2)$-maps. Marked Thurston $(2, 2, 2, 2)$-maps were studied in \cite{Selinger_Yampolski} and \cite{Bartholdi_Dudko}. Additionally, although not explicitly stated, the result of \cite{HSS} also applies in the context of marked exponential Thurston maps. Finally, in \cite[Theorem~4.4 and Example~4.15]{small_postsingular_set}, it is shown that a marked Thurston maps with at most one essential singularity, at most three postsingular values, and four marked points is realized if and only if it has no Levy cycle.

\subsection{Pullback maps}\label{subsec: intro pullback maps}
From this point forward, we assume that any marked set contains at least three points, as the case of fewer marked points is rather trivial; see Proposition \ref{prop: small set}.
The key method in determining whether a given finite or infinite degree marked Thurston map $f \colon (S^2, A) \rto$ is realized by a psf holomorphic map is the analysis of the dynamics of an operator~$\sigma_{f, A}$, known as the \textit{pullback map}, defined on the \textit{Teichm\"uller space}~$\T_A$; see Sections~\ref{subsec: teichmuller and moduli spaces} and~\ref{subsec: pullback maps}. Roughly speaking, the Teichm\"uller space $\T_A$ encodes all possible ways to put a complex structure, which is biholomorphic to a punctured Riemann sphere, on the punctured topological sphere $S^2 - A$, up to a certain equivalence depending on the set~$A$. Crucially, the Thurston map $f\colon (S^2, A) \rto$ is realized if and only if the pullback map $\sigma_{f, A}$ has a fixed point in $\T_A$; see Proposition \ref{prop: fixed point of sigma}. Moreover, the dynamics of the pullback map encodes many other properties of the corresponding Thurston map; see, for instance, \cite{pullback_map}, \cite{Pullback_invariants}, and \cite{correpondences}.

Teichm\"uller space has the structure of $(|A| - 3)$-complex-dimensional manifold and can be equipped with a complete metric $d_T$, known as the \textit{Teichm\"uller metric}. It is known that every pullback map $\sigma_{f, A}$ is holomorphic and \textit{1-Lipschitz} on $\T_A$, meaning $d_T(\sigma_{f, A}(\tau_1), \sigma_{f, A}(\tau_2)) \leq d_T(\tau_1, \tau_2)$ for all $\tau_1, \tau_2 \in \T_A$. Moreover, if $f$ is transcendental, then $\sigma_{f, A}$ is \textit{distance-decreasing}, i.e., $d_T(\sigma_{f, A}(\tau_1), \sigma_{f, A}(\tau_2)) < d_T(\tau_1, \tau_2)$ for all distinct $\tau_1, \tau_2 \in \T_A$. The same property holds for some iterate $\sigma_{f, A}^{\circ k}$ provided that $f$ is not a $(2, 2, 2, 2)$-map; see Proposition \ref{prop: propeties of pullback map}. In particular, if such a pullback map $\sigma_{f, A}$ has a fixed point, then it is necessarily unique. However, these properties no longer apply for $(2, 2, 2, 2)$-maps. For instance, if $f$ is a $(2, 2, 2, 2)$-map, then $\sigma_{f, P_f}$ is an isometry of $\T_{P_f}$, and it is even possible that $\sigma_{f, P_f} = \id_{\T_{P_f}}$; see \cite[Appendix~C.6]{Hubbard_Book_2}. This creates the main challenge of working with marked Thurston $(2, 2, 2, 2)$-maps, and one of the reasons why Thurston's result does not extend to this context.

Although the pullback maps of transcendental Thurston maps are distance-decreasing, they exhibit more intricate behavior compared with those of finite degree Thurston maps. To explain this further, we need to introduce the \textit{moduli space} $\M_A$, which, roughly speaking, captures all possible complex structures on $S^2 - A$ that are biholomorhic to a punctured Riemann sphere; see Section \ref{subsec: teichmuller and moduli spaces} for the precise definition. There is a natural projection map $\pi_A \colon \T_A \to \M_A$ from the Teichm\"uller space $\T_A$ to the moduli space $\M_A$. For finite degree maps, there is a close interplay between the $\sigma_f$-orbit $(\sigma_{f, A}^{\circ n}(\tau))$ of a point $\tau \in \T_A$ and its projection $(\pi_A(\sigma_{f, A}^{\circ n}(\tau)))$ to the moduli space $\M_A$; see \cite{critically_finite_endomorphisms} and \cite[Section~10.9]{Hubbard_Book_2}. However, this relationship becomes more subtle and less understood in the case when $f$ is a transcendental map (partial results can be found in \cite{small_postsingular_set}). For instance, one of the key steps in the proof of Thurston's characterization theorem is to show that $\sigma_{f, A}$-orbit of $\tau \in \T_A$ converges (indicating that $f$ is realized) if and only if its projection to the moduli space has a limit, provided that $f$ is not a $(2, 2, 2, 2)$-map; see \cite[Theorem 10.9.2]{Hubbard_Book_2}. This result is not yet known for transcendental Thurston maps, posing one of the significant challenges in extending Thurston's original proof to the transcendental setting.

In this paper, we seek to understand the properties of a marked Thurston map, assuming we have knowledge about the same Thurston map with respect to a smaller marked set. Specifically, we discuss  realizability question in this context, employing explicit techniques that shed light on the geometry of the pullback dynamics on Teichmüller and moduli spaces.

\subsection{Main results}

Let $f \colon (S^2, A) \rto$ be a marked Thurston map, with finite or infinite degree, which may have at most countable closed set of essential singularities. Suppose that $B \subset A$ is an $f$-pseudo-invariant subset containing $S_f$. Then, we can also consider the Thurston map $f \colon (S^2, B) \rto$. It is natural to ask how the dynamical properties of the Thurston map $f \colon (S^2, A)\rto$ are related to those of $f \colon (S^2, B) \rto$. For instance, it is clear that if $f \colon (S^2, A) \rto$ is realized, then $f \colon (S^2, B) \rto$ must also be realized, but the converse does not necessarily hold. The main result of this paper provides sufficient and necessary condition for when this happens.

\begin{maintheorem}\label{mainthm A}
    Let $f \colon (S^2, A) \rto$ be a Thurston map of finite or infinite degree. Suppose that $B \subset A$ is an $f$-pseudo-invariant set containing $S_f$. If the Thurston map $f \colon (S^2, B) \rto$ is realized, then the Thurston map $f \colon (S^2, A) \rto$ is realized if and only if it has no degenerate Levy multicurve consisting of curves that are non-essential in $S^2 - B$.
\end{maintheorem}

Here, if $A$ is a finite subset $S^2$, then a simple closed curve $\gamma \subset S^2 - A$ is \textit{non-essential} if it can be shrinked to a point by a homotopy in $S^2 - A$. Otherwise, we say that $\gamma$ is \textit{essential} in $S^2 - A$. A collection $\mathcal{G} = \{\gamma_1, \gamma_2, \dots, \gamma_r\}$ of essential simple closed curves in $S^2 - A$ is called a \textit{Levy cycle} for the Thurston map $f\colon (S^2, A) \rto$ if there exists a cyclic permutation $\sigma$ of $\{1, 2, \dots, r\}$ so that for every $i$ there is a simple closed curve $\widetilde{\gamma}_i \in f^{-1}(\gamma_{i})$ that is homotopic in $S^2 - A$ to $\gamma_{\sigma(i)}$ and $\deg(f|\widetilde{\gamma}_i \colon \widetilde{\gamma}_i \to \gamma_i) = 1$. 
A Levy cycle $\mathcal{G}$ is \textit{degenerate} if each $\gamma_i$ bounds an open Jordan region $D_i$ so that for every~$i$ some connected component $\widetilde{D}_i$ of $f^{-1}(D_i)$ is homotopic in $S^2 - A$ to $D_{\sigma(i)}$ and maps with degree~$1$ by the map $f$ to $D_i$. Finally, Levy cycle $\mathcal{G}$ is called a \textit{Levy multicurve} if all curves in $\mathcal{G}$ are pairwise disjoint and pairwise non-homotopic in $S^2 - A$; see Section \ref{subsec: obstructions} for further discussion.

Main Theorem \ref{mainthm A} treats the following three cases for the map $f$ in a unified manner:
\begin{enumerate}
    \item\label{it: easy case} $f$ is a finite degree map that is not a $(2, 2, 2, 2)$-map;
    \item $f$ is a $(2, 2, 2, 2)$-map;
    \item $f$ is an infinite degree map.
\end{enumerate}

In scenario \eqref{it: easy case}, the Main Theorem \ref{mainthm A} can be derived as a direct consequence of Thurston's characterization criterion for marked Thurston map. However, for the other two cases, Thurston's result does not apply, and we have to address the challenges discussed in Section~\ref{subsec: intro pullback maps}. 

Note that Main Theorem \ref{mainthm A} does not make any assumptions about the function-theoretic properties of the considered Thurston maps. This differs from other results in this direction: \cite{HSS}, which focuses on exponential Thurston maps, \cite{Shemyakov_Thesis}, which is primarily devoted to \textit{structurally finite} Thurston maps, and \cite{small_postsingular_set}, where all Thurston maps have at most three singular values. Note that the geometric and analytic properties of even entire or meromorphic maps with finitely many singular values can be highly non-trivial; see~\cite{qc_foldings, order_conjecture, models_for_class_S}.

\subsubsection{Strategy of proof}\label{subsubsec: Strategy of proof}

For simplicity, we will outline how to derive the existence of a Levy cycle for the Thurston map $f \colon (S^2, A) \rto$ in the context of Main Theorem \ref{mainthm A}. With further efforts, the complete version of Main Theorem \ref{mainthm A} can be established.

We begin by considering the case where the set $C := A - B$ consists of a single fixed point~$c$ of the map $f$. Suppose that the Thurston map $f \colon (S^2, B) \rto$ is realized by a postsingularly finite holomorphic map $g \colon (\widehat{\C}, P) \rto$, and let $\mu \in \T_B$ be the corresponding fixed point of the pullback map $\sigma_{f, B}$ defined on the Teichm\"uller space $\T_B$. There exist a holomorphic submersion $t_{A, B} \colon \T_A \to \T_B$ naturally induced by forgetting the marked point~$c$. It is straightforward to show that the fiber $\Delta := t_{A, B}^{-1}(\mu)$ is an $\sigma_{f, A}$-invariant subset of the Teichm\"uller space $\T_A$. Furthermore, as the following proposition indicates, the pullback map $\sigma_{f, A}$ behaves particularly well on~$\Delta$; see Proposition \ref{prop: extra marked point} for a more detailed version of this result.

\begin{proposition}\label{prop: first prop intro}
    \begin{enumerate}
        \item $\Delta$ is a properly and holomorphically embedded unit disk in $\T_A$ such that $\sigma_{f, A}(\Delta) \subset \Delta$;
        \item there exists a holomorphic covering map $\pi \colon \Delta \to \widehat{\C} - P$ such that diagram \eqref{fig: intro} commutes, where $W := \widehat{\C} - \overline{g^{-1}(P)}$. 

        \begin{figure}[h]
\begin{tikzcd}
    \Delta \arrow[rr, "\sigma_{f, A}|\Delta"] \arrow[dd, "\pi"] & & \sigma_{f, A}(\Delta) \arrow[dd, "\pi"]\\
    & & \\
    \widehat{\C} - P & & W \arrow[ll, "g|W"]
\end{tikzcd}
\caption{Behavior of $\sigma_{f, A}$ on $\Delta$.}\label{fig: intro}
\end{figure}   
    \end{enumerate}
\end{proposition}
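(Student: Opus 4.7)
My plan is to leverage the classical Bers fiber space structure on the forgetful map together with the realization data at $\mu$ to pin down both the complex structure on $\Delta$ and the dynamics of $\sigma_{f, A}$ along it.

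\textbf{Step 1 (fiber = disk).} I would first apply the Bers fiber space construction to the forgetful holomorphic submersion $t_{A, B} \colon \T_A \to \T_B$, which adds one marked point and has fibers canonically biholomorphic to the universal cover of the corresponding punctured sphere. Since $|B| \geq 3$, the punctured sphere $\widehat{\C} - P$ in our setting is hyperbolic, so by uniformization its universal cover is the unit disk $\D$; thus $\Delta = t_{A, B}^{-1}(\mu)$ is biholomorphic to $\D$. It is a closed complex submanifold of $\T_A$ as the fiber of a holomorphic submersion over a single point, hence properly and holomorphically embedded.

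\textbf{Step 2 (invariance).} Next I would verify the intertwining identity $t_{A, B} \circ \sigma_{f, A} = \sigma_{f, B} \circ t_{A, B}$ directly from the definitions: since $f(c) = c$ and $B$ is $f$-pseudo-invariant, pulling back a complex structure by $f$ commutes with forgetting the extra marked point $c$. Combined with $\sigma_{f, B}(\mu) = \mu$, this forces $\sigma_{f, A}(\Delta) \subseteq \Delta$, completing part (1).

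\textbf{Step 3 (the covering $\pi$ and commutativity).} The map $\pi \colon \Delta \to \widehat{\C} - P$ would be taken as the universal covering projection coming from the Bers identification in Step 1. Concretely, a point $\tau \in \Delta$ is the datum of a marked structure on $(S^2, A)$ restricting to $\mu$ on $B$, which under the realization $g$ is represented by $(\widehat{\C}, P \cup \{y\})$ with $y = \pi(\tau)$. To check the diagram, I would unwind the pullback: the structure $\sigma_{f, A}(\tau)$ is represented by $(\widehat{\C}, P \cup \{y'\})$ such that a suitable lift of $f$ conjugates to a holomorphic map which, on the underlying structure induced by $\mu$, is precisely $g$, and which sends $y'$ to $y$. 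Hence $g(y') = y$, which is the commutativity of the diagram. That $y' \in W = \widehat{\C} - \overline{g^{-1}(P)}$ follows because $y'$ lies in $\widehat{\C}$ (so it is not an essential singularity of $g$) and $g(y') = y \notin P$ (so it is not in $g^{-1}(P)$), and $\overline{g^{-1}(P)}$ is the union of these two closed sets.

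\textbf{Main obstacle.} The principal technical point is Step 3: one has to choose the lift of $f$ appearing in the definition of $\sigma_{f, A}$ coherently with the realization of $f \colon (S^2, B) \rto$ by $g$ at $\mu$, and then follow the isotopy class of the marked point $c$ through the pullback so that the Bers-theoretic projection of $\sigma_{f, A}(\tau)$ matches the dynamical preimage under $g$. Once this identification of $y'$ as the $g$-preimage of $y$ along the correct branch is set up, both the commutativity of the diagram and the fact that $\pi$ is a covering map (and not merely holomorphic) are immediate consequences of the universality of the Bers fiber.
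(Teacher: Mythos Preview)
Your proposal is correct and follows essentially the same route as the paper. The paper's proof (Proposition~\ref{prop: extra marked point}) invokes the Bers fiber description for part~(1), the compatibility $t_{A,B}\circ\sigma_{f,A}=\sigma_{f,B}\circ t_{A,B}$ (Proposition~\ref{prop: dependence}) for invariance, and then verifies commutativity by choosing a representative $\varphi$ of $\tau\in\Delta$ with $\varphi|B=\theta_1|B$, lifting to $\psi$ with $g=\varphi\circ f\circ\psi^{-1}$, and reading off $g(\psi(c))=\varphi(c)$; the covering property of $\pi$ is obtained from the commutative diagram relating the forgetful maps on Teichm\"uller and moduli spaces (diagram~\eqref{fig: marked points forgetting}). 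Your ``main obstacle'' is exactly what Proposition~\ref{prop: dependence} resolves: the coherent choice of representatives so that the holomorphic map arising from $\sigma_{f,A}$ at any $\tau\in\Delta$ is literally $g$.
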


In other word, although the pullback map $\sigma_{f, A}$ is defined on the $(|A| - 3)$-complex-dimensional manifold $\T_A$, it admits a one-complex-dimensional invariant subset. This allows us to use powerful tools of one-dimensional complex dynamics and hyperbolic geometry to better understand the dynamical behavior of $\sigma_{f, A}$.

Holomorphic self-maps of the unit disk that satisfy commutative diagram \eqref{fig: intro} have been already investigated. Notably, pullback maps defined on the one-complex-dimensional Teichm\"uller space, which is biholomorphic to the unit disk, frequently exhibit this property; see \cite{critically_finite_endomorphisms} for various examples and extending this observation to higher-dimensional Teichm\"uller spaces in the context of finite degree Thurston maps, and \cite{small_postsingular_set} for the broad family of such pullback maps for infinite degree Thurston maps.  In particular, by applying \cite[Theorem 3.10]{small_postsingular_set}, we conclude that if the Thurston map $f \colon (S^2, A) \rto$ is obstructed, then for any $\tau \in \Delta$, the projection $(\pi(\sigma_{f, A}^{\circ n}(\tau)))$ of the $\sigma_{f, A}$-orbit of $\tau$ converges to the same repelling fixed point $x \in P$ of the map $g$, regardless of the choice of $\tau$. Finally, we develop tools (see Section \ref{subsec: detecting levy cycles}) that allow us to deduce the existence of a Levy cycle for the Thurston map $f \colon (S^2, A) \rto$ based on the behavior of $\sigma_{f, A}|\Delta$.

These ideas also extend to the case when the set $C = B - A$ consists of several fixed points of the map $f$. We say that a point $c \in A$ is a \textit{trivial marked point} of the Thurston map $f \colon (S^2, A) \rto$ if $c \in A - P_f$ and either $c$ is strictly pre-periodic, or it eventually lands to an essential singularity of the map $f$ after several iterations (this includes the case when $c$ is already an essential singularity of $f$). In other words, a point $c \in A - P_f$ is non-trivial if it is periodic under the map $f$. In Section \ref{subsec: forgetting marked points}, we demonstrate that extending the marked set by adding trivial marked points essentially does not change the properties of the Thurston map or the corresponding pullback map, allowing us to ignore them in the context of the proof of Main Theorem \ref{mainthm A}. Therefore, from this point forward, we assume that every point $c \in C$ is periodic under the map $f$.

Given the previous assumptions, we can find an iterate $f^{\circ m} \colon (S^2, A) \rto$ of the original Thurston map such that every point $c \in C$ becomes fixed, which allows us to apply previous considerations to the map $f^{\circ m}$. It is straightforward to see that the existence of a Levy cycle for the Thurston map $f^{\circ m} \colon (S^2, A) \rto$ also implies the existence of one for the original Thurston map $f \colon (S^2, A) \rto$. However, to complete the argument, we must show that $f^{\circ m} \colon (S^2, A) \rto$ is obstructed if $f \colon (S^2, A) \rto$ is also obstructed. This is a trivial result for Thurston maps that are not of $(2, 2, 2, 2)$-type because of the contraction properties of the corresponding pullback maps. However, for the family of marked Thurston $(2, 2, 2, 2)$-maps, this conclusion is non-trivial. Nevertheless, we confirm the positive answer in the following statement, which finishes the proof of Main Theorem \ref{mainthm A}; see Section \ref{subsec: passing to an iterate}, particularly Proposition \ref{prop: passing to iterate}.

\begin{proposition}\label{prop: second prop intro}
    For each $m \geq 1$, a Thurston map $f \colon (S^2, A) \rto$ is realized if and only the iterate $f^{\circ m} \colon (S^2, A) \rto$ is realized. 
\end{proposition}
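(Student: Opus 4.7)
The forward implication is routine. If $f$ is realized by a psf holomorphic map $g$ and $h_0, h_1 \colon S^2 \to \widehat{\C}$ are homeomorphisms isotopic rel $A$ with $h_0 \circ f = g \circ h_1$, then by induction on $m$ one constructs $h_m \sim h_0$ rel $A$ satisfying $h_0 \circ f^{\circ m} = g^{\circ m} \circ h_m$, exhibiting $g^{\circ m}$ as a realization of $f^{\circ m} \colon (S^2, A) \rto$. The inductive step amounts to lifting the isotopy $h_0 \sim h_1$ through the topologically holomorphic maps $f$ and $g$ (possible because they are ramified covers away from their singular sets and the isotopy fixes the branch values) and composing with $h_{m-1}$.

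For the reverse direction I would argue via pullback maps. By Proposition \ref{prop: fixed point of sigma}, realizability of $f$ (resp.\ $f^{\circ m}$) is equivalent to the existence of a fixed point in $\T_A$ for $\sigma_{f, A}$ (resp.\ $\sigma_{f, A}^{\circ m} = \sigma_{f^{\circ m}, A}$). Given $\tau \in \T_A$ with $\sigma_{f, A}^{\circ m}(\tau) = \tau$, the orbit $\tau, \sigma_{f, A}(\tau), \dots, \sigma_{f, A}^{\circ (m-1)}(\tau)$ is a periodic cycle of $\sigma_{f, A}$ of length dividing $m$, and it suffices to prove that this cycle has length one. When $f$ is not a $(2,2,2,2)$-map, Proposition \ref{prop: propeties of pullback map} supplies an integer $k$ for which $\sigma_{f, A}^{\circ k}$ is strictly distance-decreasing in the Teichm\"uller metric; then $\sigma_{f, A}^{\circ mk}$ admits at most one fixed point, forcing $\tau = \sigma_{f, A}(\tau)$.

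The main obstacle is the $(2,2,2,2)$-case, where $\sigma_{f, A}$ may act as a non-trivial holomorphic isometry on $\T_A$ and the contraction argument above fails. Here I would exploit the rigid structure of $(2,2,2,2)$-maps, which are semi-conjugate via a Weierstrass $\wp$-function to affine endomorphisms $z \mapsto a z + b$ of an underlying elliptic curve. A psf holomorphic realization of $f^{\circ m}$ corresponds to an affine endomorphism whose multiplier is an $m$-th power; after extracting an $m$-th root at the affine level and descending via $\wp$, one obtains a candidate rational realization of $f$, and the remaining work is to verify combinatorial equivalence with $f$ relative to the entire marked set $A$, not merely with some other $(2,2,2,2)$ Thurston map sharing this $m$-th iterate. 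An alternative route is to linearize $\sigma_{f, A}$ on the invariant subvariety spanned by the periodic cycle using its isometric action on $\T_A$ and apply the M\"obius-type classification of holomorphic isometries to locate a fixed point directly. Either strategy ultimately rests on the explicit classification of marked Thurston $(2,2,2,2)$-maps from \cite{Selinger_Yampolski} and \cite{Bartholdi_Dudko}, and combinatorial matching at the end is the principal technical hurdle.
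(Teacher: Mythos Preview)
Your forward direction and the non-$(2,2,2,2)$ case of the reverse direction are essentially what the paper does (the paper phrases the forward direction directly in terms of fixed points of $\sigma_{f,A}$, but your lifting argument is equivalent).

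The $(2,2,2,2)$ case is where your proposal has a genuine gap. Both routes you sketch are incomplete. The affine $m$-th root approach runs into exactly the obstacle you name: even if you extract some affine map whose $m$-th power descends to a realization of $f^{\circ m}$, nothing in your outline forces that map to be combinatorially equivalent to $f$ rel.\ $A$ rather than to some other Thurston map with the same $m$-th iterate; you call this ``the principal technical hurdle'' but do not resolve it. The second route (``linearize $\sigma_{f,A}$ on the invariant subvariety'' and appeal to a M\"obius classification) is not well-defined as stated: when $|A|>4$ the space $\T_A$ is higher-dimensional and $\sigma_{f,A}$ is not an automorphism of a disk, so there is no M\"obius classification to invoke. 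Finally, the paper's argument is designed precisely to avoid relying on the external classification results in \cite{Selinger_Yampolski} and \cite{Bartholdi_Dudko}.

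The paper instead argues as follows. For a $(2,2,2,2)$-map, the pullback map $\sigma_{f,P_f}$ acts as an automorphism of $\T_{P_f}\cong\D$. The given periodic point of $\sigma_{f,A}$ projects under $t_{A,P_f}$ to a periodic point of $\sigma_{f,P_f}$; an automorphism of $\D$ with a periodic point must have a fixed point $\mu$ (Schwarz lemma plus classification of disk automorphisms). Now $\mu$ is also fixed by $\sigma_{f^{\circ m},P_f}$, and the paper's earlier Proposition~\ref{prop: fixed points in fibers} guarantees that $\sigma_{f^{\circ m},A}$ has a \emph{unique} fixed point $\tau$ in the fiber $t_{A,P_f}^{-1}(\mu)$. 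Since that fiber is $\sigma_{f,A}$-invariant, both $\tau$ and $\sigma_{f,A}(\tau)$ are fixed points of $\sigma_{f^{\circ m},A}$ lying in it, hence coincide. The key ingredient you are missing is this uniqueness-in-fibers statement, which the paper proves independently using the one-dimensional invariant disks of Section~\ref{subsec: extra fixed point}.
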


In conclusion, we mention that the result of Main Theorem \ref{mainthm A} is known for finite degree marked Thurston maps; see \cite[Main Theorem II]{Selinger_Yampolski} and \cite[Theorem E]{Bartholdi_Dudko}. However, our approach differs significantly from these two works, and most crucially, applies to the infinite degree case.

\subsection{Organization of the paper}\label{subsec: Organization of the paper}

The paper is organized as follows. In Section \ref{sec: background}, we review some general background. In Section \ref{subsec: notation and basic concepts}, we fix the notation and state some basic definitions. We discuss topologically holomorphic maps in Section \ref{subsec: topologically holomorphic maps}. The necessary background on Thurston maps is covered in Section \ref{subsec: thurston maps}. Section \ref{subsec: teichmuller and moduli spaces} introduces the Teichm\"uller and moduli spaces of a marked topological sphere. In Section \ref{subsec: pullback maps}, we define pullback maps, discuss their basic properties and their relations with the associated Thurston~maps. Finally, Section \ref{subsec: obstructions} is devoted to Levy cycles.

In Section \ref{sec: marked Thurston maps}, we investigate the realizability question and obstructions for marked Thurston maps. Specifically, in Section \ref{subsec: detecting levy cycles}, we provide tools for detecting Levy cycles for marked Thurston maps.
We then focus on the relations between the properties of Thurston maps $f \colon (S^2, A) \rto$ and $f \colon (S^2, B) \rto$, where $B \subset A$ is an $f$-pseudo-invariant subset containing $S_f$. In Section~\ref{subsec: forgetting marked points}, we consider the case when the set $A - B$ consists of trivial marked points. Section~\ref{subsec: extra fixed point} addresses the situation when $A - B$ consists of a single fixed point of the map $f$. In Section~\ref{subsec: realized thurston maps with extra marked points}, we explore the relations between the fixed points of the pullback maps $\sigma_{f, A}$ and $\sigma_{f, B}$. In Section~\ref{subsec: passing to an iterate}, we use previously established results to relate the properties of a marked Thurston map to those of its iterates. Finally, in Section~\ref{subsec: thurston maps with extra marked points}, we study obstructions for the Thurston map $f \colon (S^2, A) \rto$ to be realized, under the assumption that the Thurston map $f \colon (S^2, B) \rto$ is realized. In particular,  we establish the proof of Main Theorem~\ref{mainthm A}.

\textbf{Acknowledgments.} I would like to express my deep gratitude to my thesis advisor, Dierk Schleicher, for introducing me to the fascinating world of Transcendental Thurston Theory. I am also profoundly thankful to Kevin Pilgrim and Lasse Rempe for their valuable suggestions and for many helpful and inspiring discussions. I would like to thank \textit{Centre National de la Recherche Scientifique} (\textit{CNRS}) for supporting my visits to the University of Saarland and University of Liverpool, where these conversations took place. Special thanks go to Anna Jov\'e and Zachary Smith for the discussions on the dynamics of holomorphic self-maps of the unit disk and their diverse applications. Lastly, I would like to thank Kostiantyn Drach and Dzmitry Dudko for valuable remarks.

\section{Background} \label{sec: background}

\subsection{Notation and basic concepts} \label{subsec: notation and basic concepts}

The sets of positive integers, real and complex numbers are denoted by $\N$, $\R$, and $\C$, respectively. We use the notation $\I:=[0,1]$ for the closed unit interval on the real line, $\D :=\{z \in \C\colon |z|<1\}$ for the
open unit disk in the complex plane, and $\widehat{\C}:=\C\cup\{\infty\}$ for the Riemann sphere. If $m, n \in \N$, the greatest common divisor of $m$ and $n$ is denoted by $\mathrm{gcd}(m, n)$.

We denote the oriented 2-dimensional sphere by $S^2$. In this paper, we treat it as purely topological object. In particular, our convention is to write $g\colon \C \to \widehat{\C}$ or $g \colon \widehat{\C} \to \widehat{\C}$ to indicate that $g$ is holomorphic, and $f\colon S^2 \to S^2$ if $f$ is only continuous. The same rule applies to the notation $g \colon \widehat{\C} \dto \widehat{\C}$ and $f \colon S^2 \dto S^2$ (see Section \ref{subsec: topologically holomorphic maps} for the details).

The cardinality of a set $A$ is denoted by $|A|$ and the identity map on $A$ by $\id_A$. If $f\colon U\to V$ is a map and $W\subset U$, then $f|W$ stands for the restriction of $f$ to $W$. If $U$ is a topological space and $W\subset U$, then $\overline W$ denotes the closure and $\partial W$ the boundary of $W$ in $U$. 

A subset $D$ of $S^2$ is called an \emph{open} or \emph{closed Jordan region} if there exists an injective continuous map $\varphi\colon \overline{\D} \to S^2$ such that $D = \varphi(\D)$ or $D = \varphi(\overline{\D})$, respectively. In this case, $\partial D = \varphi(\partial \D)$ is a simple closed curve in $S^2$. A domain $U \subset \widehat{\C}$ is called an \textit{annulus} if $\widehat{\C} - U$ has two connected components. The \textit{modulus} of an annulus $U$ is denoted by $\mod(U)$ (see \cite[Proposition~3.2.1]{Hubbard_Book_1}~for~the~definition).

Let $U$ and $V$ be topological spaces. A continuous map $H\colon U\times \I \to V$ is called a \emph{homotopy} from $U$ to $V$. When $U = V$, we simply say that $H$ is a homotopy \textit{in} $U$. Given a homotopy $H \colon U \times \I \to V$, for each $t \in \I$, we associate the \emph{time-$t$ map} $H_t:=H(t,\cdot)\colon U\to V$. Sometimes it is convenient to think of the homotopy $H$ as a continuous family of its time maps~$(H_t)_{t \in \I}$. The homotopy $H$ is called an (\emph{ambient}) \emph{isotopy} if the map $H_t\colon U\to V$ is a homeomorphism for each $t\in \I$. Suppose $A$ is a subset of $U$. An isotopy $H\colon U\times \I \to V$ is said to be an isotopy \emph{relative to $A$} (abbreviated ``$H$ is an isotopy rel.\ $A$'') if $H_t(p) = H_0(p)$ for all $p\in A$ and $t\in \I$. 

Given $M, N \subset U$, we say that \emph{$M$ is homotopic} (\textit{in $U$}) to $N$ if there exists a homotopy $H\colon U\times \I \to U$  with $H_0 = \id_U$ and $H_1(M) = N$. Two homeomorphisms $\varphi_0, \varphi_1\colon U\to V$ are called \emph{isotopic} (\emph{rel.\ $A\subset U$}) if there exists an
isotopy $H\colon U\times \I \to V$ (rel.\ $A$) with $H_0=\varphi_0$ and $H_1 = \varphi_1$. 

We assume that every topological surface $X$ is oriented. We denote by $\Homeo^+(X)$ and $\Homeo^+(X, A)$ the group of all orientation-preserving self-homeomorphisms of $X$ and the group of all orientation-preserving self-homeomorphisms of $X$ fixing $A$ pointwise, respectively. We use the notation $\Homeo^+_0(X, A)$ for the subgroup of $\Homeo^+(X, A)$ consisting of all homeomorphisms isotopic rel.\ $A$ to $\id_X$.

Further, we will require the following two topological facts. Here, $\Isol(X)$ denotes the set of isolated points of the topological space $X$.

\begin{lemma}\label{lemm: small sets}
    If $E$ is at most countable closed set of $S^2$, then $E$ coincides with the closure of its isolated points, i.e., $E = \overline{\Isol(E)}$, and its complement $S^2 - E$ is path-connected.
\end{lemma}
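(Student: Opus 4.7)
The plan is to prove the two assertions separately: the equality $E = \overline{\Isol(E)}$ via a Baire category argument applied to the set $E \setminus \overline{\Isol(E)}$, and the path-connectedness of $S^2 - E$ by reducing to $\R^2$ minus a countable set and using an elementary geometric argument with bent paths.

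For the first claim, I would let $F := E \setminus \overline{\Isol(E)}$ and aim to show $F = \emptyset$. Since $E$ is closed in the compact Hausdorff space $S^2$, it is compact Hausdorff, hence a Baire space. The set $F$ is open in $E$ (as the complement of a closed set), so as an open subset of a locally compact Hausdorff space, $F$ is itself locally compact Hausdorff and therefore Baire. The key observation is that $F$ has no isolated points: if $\{x\}$ were open in $F$, then since $F$ is open in $E$ this singleton would also be open in $E$, forcing $x \in \Isol(E)$, contradicting $x \in F$. Since $F$ is at most countable, I can write $F = \bigcup_{x \in F}\{x\}$ as a countable union of singletons, each of which is closed in $F$ (by Hausdorffness) and has empty interior in $F$ (by absence of isolated points). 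Provided $F$ is non-empty, this contradicts the Baire property of $F$. Thus $F = \emptyset$, which gives $E \subset \overline{\Isol(E)}$; the reverse inclusion is immediate since $\Isol(E) \subset E$ and $E$ is closed.

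For the second claim, note first that $S^2 - E$ is an open subset of a manifold, hence locally path-connected, so path-connectedness is equivalent to connectedness. Given distinct $p, q \in S^2 - E$, pick any point $r \in S^2 - (E \cup \{p, q\})$, which exists because $E \cup \{p, q\}$ is countable while $S^2$ is not. Identifying $S^2 - \{r\}$ with $\R^2$, it suffices to construct a path from $p$ to $q$ in $\R^2 - E$. For each $m \in \R^2$, let $\gamma_m$ be the bent path $[p, m] \cup [m, q]$. For any fixed $c \in E$, the set of $m$ with $c \in \gamma_m$ is contained in the union of two rays (those emanating from $p$ through $c$ and from $q$ through $c$), which has planar Lebesgue measure zero. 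Since $E$ is countable, the set of "bad" midpoints $m$ for which $\gamma_m$ meets $E$ has measure zero, and so a "good" $m$ exists; the corresponding $\gamma_m$ is the desired path.

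The steps are essentially routine once the right framework is chosen; the only subtle point is justifying that $F = E \setminus \overline{\Isol(E)}$ inherits the Baire property, which is the core of the Cantor–Bendixson-style dichotomy for countable closed sets. The reduction to $\R^2$ for the second claim keeps everything elementary and avoids appealing to any dimension-theoretic separation theorems on $S^2$.
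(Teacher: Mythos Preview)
Your proof is correct, and both parts rest on the same ideas as the paper's proof, only packaged differently. For the density statement, the paper applies Baire directly to $E$: enumerating $E \setminus \Isol(E) = \{e_1, e_2, \dots\}$, each $E_i := E \setminus \{e_i\}$ is open and dense in $E$ (density because $e_i$ is not isolated), so $\Isol(E) = \bigcap_i E_i$ is dense. Your version applies Baire instead to the subspace $F = E \setminus \overline{\Isol(E)}$ and shows it is a countable perfect Baire space, hence empty; this is the same argument turned inside out, at the cost of the extra verification that $F$ inherits the Baire property. For path-connectedness, the paper simply notes that any two points of $S^2$ are joined by uncountably many arcs that are pairwise disjoint except at their endpoints, so a countable $E$ cannot meet all of them. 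Your reduction to $\R^2$ via stereographic projection and the measure-zero estimate on bad midpoints is a more explicit realization of the same cardinality trick; it buys concreteness but is slightly longer.
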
 

\begin{proof}
    Let $E - \Isol(E) = \{e_1, e_2, \dots, e_n, \dots\}$, and define $E_i := E - \{e_i\}$ for each $i \in \N$. Clearly, each $E_i$ is open and dense in $E$. Therefore, by the Baire category theorem, $\Isol(E) = \bigcap_{i \in \N} E_i$ is dense in $E$.

    The set $S^2 - E$ is path-connected because, for any two points $e_1, e_2 \in S^2 - E$, there are uncountably many arcs in $S^2$ connecting $e_1$ to $e_2$ and intersecting each other only at their endpoints. 
\end{proof}

\begin{lemma}\label{lemm: extensions}
    Let $\varphi \colon X \to Y$ be a homeomorphism, where $X = S^2 - E_X$ and $Y = S^2 - E_Y$, with $E_X$ and $E_Y$ being at most countable closed sets of $S^2$. Then $\varphi$ has a unique continuous extension to $S^2$. Moreover, this extension is a self-homemorphism of $S^2$ with $\varphi(E_X) = E_Y$. 

    If $X = \widehat{\C} - E_X$ and $Y = \widehat{\C} - E_Y$, where $E_X$ and $E_Y$ are at most countable closed sets of~$\widehat{\C}$, and $\varphi \colon X \to Y$ is a biholomorphism, then $\varphi$ extends to a M\"obius transformation.
\end{lemma}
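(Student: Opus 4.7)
The plan is to build the extension via cluster sets, exploiting that $E_X$ and $E_Y$ are at most countable. First I would observe that $E_X$ has empty interior (being a countable closed subset of an uncountable space with no isolated points), so $X$ is dense in $S^2$; this forces uniqueness of any continuous extension. For existence, fix $e \in E_X$ and a nested sequence of open Jordan disks $B_n \supset B_{n+1}$ with $\bigcap_n B_n = \{e\}$, and study the cluster set $C(e) := \bigcap_n \overline{\varphi(B_n \cap X)}$. Each $B_n \cap X = B_n \setminus E_X$ is a disk minus an at most countable set and hence path-connected (given two of its points, an uncountable pencil of otherwise disjoint arcs between them cannot all meet the countable set $E_X \cap B_n$). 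So $\varphi(B_n \cap X)$ is connected, and $C(e)$ is a decreasing intersection of compact connected subsets of $S^2$, hence non-empty, compact, and connected.

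Next I would rule out $C(e) \cap Y \ne \emptyset$: if $y_0 \in C(e) \cap Y$, then I can choose $x_n \in B_n \cap X$ with $\varphi(x_n) \to y_0$; continuity of $\varphi^{-1}$ on $Y$ would force $x_n \to \varphi^{-1}(y_0) \in X$, contradicting $x_n \to e \in E_X$. Thus $C(e) \subset E_Y$, and being a connected subset of an at most countable metric space it must be a singleton (every connected metric space with more than one point has cardinality at least the continuum). I define $\tilde\varphi(e)$ to be this unique point; continuity of $\tilde\varphi$ at $e$ is then immediate from the definition of the cluster set.

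Applying the same construction to $\varphi^{-1}$ yields a continuous map $\widetilde{\varphi^{-1}}\colon S^2 \to S^2$, and a direct sequence chase shows $\widetilde{\varphi^{-1}} \circ \tilde\varphi = \id_{S^2}$ and symmetrically on the other side. So $\tilde\varphi$ is a continuous bijection between compact Hausdorff spaces, hence a self-homeomorphism of $S^2$ satisfying $\tilde\varphi(E_X) = E_Y$. For the holomorphic case, $\tilde\varphi\colon \widehat{\C} \to \widehat{\C}$ is now a self-homeomorphism that is already holomorphic off $E_X$. I would argue by transfinite induction on the Cantor-Bendixson derivatives of $E_X$: since $E_X$ is countable and closed in the Polish space $\widehat{\C}$, these derivatives terminate at $\emptyset$ at some countable ordinal. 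At each successor stage, Riemann's removable singularity theorem (applied in local charts, with continuity into $\widehat{\C}$ handling the potential pole case) extends holomorphy across the newly isolated points; at limit ordinals, holomorphy on the corresponding union of open sets is automatic. The outcome is a holomorphic self-bijection of $\widehat{\C}$, which must be a Möbius transformation.

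I expect the central obstacle to be showing $|C(e)| = 1$: this is the place where the argument genuinely requires both connectedness (coming from local path-connectedness of $X$, which uses the cardinality of pencils of arcs) and the at-most-countability of $E_Y$. The Cantor-Bendixson induction in the holomorphic case is then routine given the topological extension already in hand.
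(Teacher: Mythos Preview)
Your proposal is correct and, for the topological extension, follows essentially the same cluster-set route as the paper: both build nested neighborhoods of $e \in E_X$, show the intersection of the image closures is a non-empty compact connected set contained in $E_Y$, and conclude it is a singleton by total disconnectedness of $E_Y$. Your presentation is somewhat streamlined---you invoke the standard fact that a nested intersection of compact connected sets in a compact Hausdorff space is connected (the paper reproves this via a normal-space argument), and you obtain bijectivity by extending $\varphi^{-1}$ symmetrically and checking the two compositions, whereas the paper verifies injectivity and surjectivity of the extended map by separate direct arguments.

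The genuine difference is in the holomorphic part. The paper post-composes with a M\"obius transformation to reduce to a self-homeomorphism of $\C$ holomorphic off a closed countable set and then appeals (without details) to Morera's theorem. Your Cantor--Bendixson transfinite induction with Riemann's removable singularity theorem is a different and more explicit mechanism: it peels off isolated points of the exceptional set one derivative at a time, terminating because a countable closed subset of a Polish space has empty perfect kernel. Both arguments are valid; yours trades a single Morera step for an ordinal-indexed iteration of the classical removable-singularity theorem, which has the advantage of being entirely self-contained and of handling the pole case transparently in local charts.
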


\begin{proof}
    Let $p \in E_X$. We can select a sequence of closed Jordan regions $(U_n)_{n \in \N}$ such that $p \in U_n$, $U_{n + 1} \subset U_n$, and $\partial U_n \subset X$ for every $n \in \N$, and $\bigcap_{n \in \N} U_n = \{p\}$.
    Let $V_n := \overline{\varphi(U_n - X)}$ be a closed Jordan region with $\partial V_n = \varphi(\partial U_n)$. Denote $V := \bigcap_{n \in \N} V_n$.

    \begin{claim}
      The set $V$ is a singleton.  
    \end{claim}

    \begin{subproof}
        Note that $V$ is non-empty due to Cantor's intersection lemma. Next, we show that $V$ is connected. Suppose, for the sake of contradiction, that $V = V' \cup V''$, where $V'$ and $V''$ are both open and closed sets of $V$, and $V' \cap V'' = \emptyset$. Since $V$ is closed in $S^2$, then $V'$ and $V''$ must also be closed subsets of $S^2$. Since $S^2$ is a normal topological space, we can find two open sets $W' \supset V'$ and $W'' \supset V''$ such that $W' \cap W'' = \emptyset$. 
        
        Denote $W := W' \cup W''$. We aim to show that $V_n \subset W$ for sufficiently large $n$. Suppose, on the contrary, that it is not the case. Then the compact set $W_n := V_n - W$ is non-empty for every $n \in \N$. Since, $W_{n + 1} \subset W_n$, Cantor's intersection lemma guarantees the existence a point $q \in \bigcap_{n \in \N} W_n$. However, it contradicts to the fact that $\bigcap_{n \in \N} W_n = V - W = \emptyset$.
        Therefore, $V_n \subset W$ for sufficiently large $n$. But then $V_n$ can be written as a disjoint union of two open non-empty sets $V_n \cap W'$ and $V_n \cap W''$, and we obtain a contradiction with the fact that $V_n$ is connected.

        Suppose there exists a point $q \in V \cap Y$. Then the point $\varphi^{-1}(q) \in X$ belongs to $U_n$ for all $n \in \N$, which is impossible. Hence, $V \subset E_Y$. But $E_Y$ is totally disconnected, and therefore, $V$ must be a singleton.
    \end{subproof}

Now let us define $\varphi(p)$ by setting $\varphi(p) := q$, where $\{q\} = V = \bigcap_{n \in \N}V_n$, and apply the same procedure to every point $p \in E_X$. It is straightforward to verify that this construction is independent of the choice of the sequence $(U_n)$. Indeed, it easily follows from the fact that if $D$ is a neighborhood of $p$, then $U_n \subset D$ for all sufficiently large $n$. At the same time, one can prove that $\varphi$ is continuous on $S^2$ by showing that $\lim_{n \to \infty} f(p_n) = f(p)$ for any sequence $(p_n)$ of points in~$S^2$ converging to $p \in S^2$. Clearly, this is the unique continuous extensions of $\varphi$ to $S^2$.

Suppose there are two points $p_1, p_2 \in S^2$ such that $f(p_1) = f(p_2)$. We can pick two disjoint open Jordan regions $D_1$ and $D_2$ such that $p_i \in D_i$ and $\partial D_i \subset X$ for $i = 1, 2$. In this case, $f(D_1)$ and $f(D_2)$ should intersect at uncountably many points of $S^2$, and that would imply that the original map was not injective on $X$. Thus, $\varphi$ is injective on $S^2$. Now, let $q \in S^2$ be an arbitrary point and $(q_n)$ be a sequence of points in $Y$ converging to $q$. Then the sequence $(\varphi^{-1}(q_n))$ has a converging subsequence in~$S^2$. Without loss of generality, we assume that the entire sequence $(\varphi^{-1}(q_n))$ converges to $p \in S^2$. Since $\varphi \colon S^2 \to S^2$ is continuous, it follows that $\varphi(p) = q$. Thus, $\varphi \colon S^2 \to S^2$ is surjective and, because $S^2$ is both compact and Hausdorff, $\varphi$ is a self-homeomorphism of $S^2$. Obviously, $\varphi(E_X) = E_Y$.

Now, suppose that $\varphi \colon X \to Y$ is a biholomorphism. As discussed earlier, it has a unique extension to a self-homeomorphism of $\widehat{\C}$. By post-composing $\varphi$ with a M\"obius transformation, we can assume that it restricts to a self-homeomorphism of $\C$ that is holomorphic outside of at most countable closed set. Then it can be shown, using Morera's theorem \cite[Theorem 5.10]{conway}, that $\varphi$ must be holomorphic everywhere on $\widehat{\C}$. We leave this verification to the reader.
\end{proof}

\subsection{Topologically holomorphic maps} \label{subsec: topologically holomorphic maps}

In this section we recall the definition of a topologically holomorphic map and some of its basic properties (for more detailed discussion see \cite[Section 2.3]{our_approx} and \cite[Section 2.2]{small_postsingular_set}; see also \cite{Stoilow}).

\begin{definition}\label{def: top hol}
    Let $X$ and $Y$ be two connected topological surfaces. A map $ f\colon X \to Y$ is called \textit{topologically holomorphic} if it satisfies one of the following four equivalent conditions:
    \begin{enumerate}
        \item for every $p \in X$ there exist $d \in \N$, a neighborhood $U$ of $x$, and two orientation-preserving homeomorphisms $\psi\colon U \to \D$ and $\varphi\colon f(U) \to \D$ such that $\psi(p) = \varphi(f(p)) = 0$ and $(\varphi \circ f \circ \psi^{-1})(z) = z^d$ for all $z \in \D$;

        \item $f$ is continuous, open, \textit{discrete} (i.e., $f^{-1}(q)$ is discrete in $X$ for very $q \in Y$), and for every $p \in X$ such that $f$ is locally injective at $p$, there exists a neighborhood $U$ of $p$ for which $f|U\colon U \to f(U)$ is an orientation-preserving homeomorphism;

        \item there exist Riemann surfaces $S_X$ and $S_Y$ and orientation-preserving homeomorphisms $\varphi \colon Y \to S_Y$ and $\psi \colon X \to S_X$ such that $\varphi \circ f \circ \psi^{-1} \colon S_X \to S_Y$ is a holomorphic map;

        \item \label{it: pulling back} for every orientation-preserving homeomorphism $\varphi \colon Y \to S_Y$, where $S_Y$ is a Riemann surface, there exist a Riemann surface $S_X$ and an orientation-preserving homeomorphism $\psi \colon X \to S_X$ such that $\varphi \circ f \circ \psi^{-1} \colon S_X \to S_Y$ is a holomorphic map.
    \end{enumerate}
\end{definition}

\begin{remark}\label{rem: uniqueness}
    Note that in condition (\ref{it: pulling back}) of Definition \ref{def: top hol}, the pair $(\psi, S_X)$ is uniquely determined up to the following equivalence: $(\psi, S_X) \sim (\widetilde{\psi}, \widetilde{S}_X)$ if there exists a biholomorphism $\theta \colon S_X \to \widetilde{S}_X$ such that $\psi = \theta \circ \widetilde{\psi}$. In particular, the conformal type of the surface $S_X$ is uniquely determined by the homeomorphism $\varphi \colon Y \to S_Y$. If $\varphi$ and the surface~$S_X$ are fixed, then the homeomorphism $\psi \colon X \to S_X$ is uniquely determined up to post-composition with a biholomorphism of $S_X$.
\end{remark}

It is straightforward to define the concepts of \textit{regular}, \textit{singular}, \textit{critical}, and \textit{asymptotic values}, as well as \textit{regular} and \textit{critical points} and their \textit{local degrees} (denoted by $\deg(f, \cdot)$) for the topologically holomorphic map $f$ (see \cite[Definition 2.7]{our_approx}). We denote by $S_f \subset Y$ the \textit{singular set} of~$f$, i.e., the union of all singular values of the topologically holomorphic map $f \colon X \to Y$. We say that the map $f$ is \textit{of finite type} or belongs to the \textit{Speiser class} $\mathcal{S}$ if the set $S_f$ is finite.

It is possible to derive the following isotopy lifting property for finite-type topologically holomorphic maps (cf. \cite[Propostion 2.3]{Rempe_Epstein} and \cite[Proposition 2.2]{small_postsingular_set}).

\begin{proposition}\label{prop: isotopy lifting property}
    Let $f \colon X \to Y$ and $\widetilde{f}\colon \widetilde{X} \to \widetilde{Y}$ be topologically holomorphic maps of finite type, where $X$, $Y$, $\widetilde{X}$, and $\widetilde{Y}$ are connected topological surfaces. Suppose that $\varphi_0 \circ f = \widetilde{f} \circ \psi_0$ for some orientation-preserving homeomorphisms $\varphi_0 \colon Y \to \widetilde{Y}$ and $\psi_0 \colon X \to \widetilde{X}$.
    Let $A \subset Y$ be a finite set containing $S_{f}$ and $\varphi_1 \colon Y \to \widetilde{Y}$ be an orientation-preserving homeomorphism isotopic rel.\ $A$ to $\varphi_0$. Then $\varphi_1 \circ f = \widetilde{f} \circ \psi_1$ for some orientation-preserving homeomorphism $\psi_1 \colon X \to \widetilde{X}$ isotopic rel.\ $f^{-1}(A)$ to $\psi_0$.
\end{proposition}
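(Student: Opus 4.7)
The plan is to reduce to a standard homotopy lifting argument on the complement of $f^{-1}(A)$ and then extend the lift across the punctures using Lemma \ref{lemm: extensions}.

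First, let $H \colon Y \times \I \to \widetilde{Y}$ be an isotopy rel.\ $A$ with $H_0 = \varphi_0$ and $H_1 = \varphi_1$, and set $\widetilde{A} := \varphi_0(A)$, so that $\varphi_t(A) = \widetilde{A}$ for every $t \in \I$. Because $A \supset S_f$ and $\varphi_0 \circ f = \widetilde{f} \circ \psi_0$ with $\varphi_0$, $\psi_0$ orientation-preserving homeomorphisms, we also have $\widetilde{A} \supset S_{\widetilde{f}}$. Writing $X_0 := X - f^{-1}(A)$, $Y_0 := Y - A$, $\widetilde{X}_0 := \widetilde{X} - \widetilde{f}^{-1}(\widetilde{A})$, and $\widetilde{Y}_0 := \widetilde{Y} - \widetilde{A}$, the restrictions $f \colon X_0 \to Y_0$ and $\widetilde{f} \colon \widetilde{X}_0 \to \widetilde{Y}_0$ are covering maps (possibly of infinite degree), and $H$ restricts to a homotopy $H \colon Y_0 \times \I \to \widetilde{Y}_0$.

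Next, I would apply the homotopy lifting property of covering maps to $H \circ (f \times \id_{\I}) \colon X_0 \times \I \to \widetilde{Y}_0$ with initial lift $\psi_0|X_0$ through $\widetilde{f} \colon \widetilde{X}_0 \to \widetilde{Y}_0$, obtaining a unique continuous family $\psi_t \colon X_0 \to \widetilde{X}_0$ with $\widetilde{f} \circ \psi_t = \varphi_t \circ f$. Each $\psi_t$ is a homeomorphism: the analogous lifting for the reverse isotopy $\varphi_t^{-1}$ with initial lift $\psi_0^{-1}$ through $f \colon X_0 \to Y_0$ produces a family $\chi_t \colon \widetilde{X}_0 \to X_0$, and the composition $\chi_t \circ \psi_t$ is a lift of $f$ through $f$ starting at $\id_{X_0}$, so it equals $\id_{X_0}$ by the uniqueness of lifts; symmetrically $\psi_t \circ \chi_t = \id_{\widetilde{X}_0}$.

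Finally, the sets $f^{-1}(A) \subset X$ and $\widetilde{f}^{-1}(\widetilde{A}) \subset \widetilde{X}$ are closed, discrete, and at most countable, so Lemma \ref{lemm: extensions} extends each $\psi_t$ uniquely to a self-homeomorphism $\psi_t \colon X \to \widetilde{X}$ sending $f^{-1}(A)$ bijectively onto $\widetilde{f}^{-1}(\widetilde{A})$; orientation-preservation is inherited from $\psi_0$ by continuity. To see that $(x,t) \mapsto \psi_t(x)$ remains continuous and that the isotopy is rel.\ $f^{-1}(A)$, I would argue locally near each puncture $p \in f^{-1}(A)$: choosing a small closed Jordan neighborhood $U$ of $p$ with $\partial U \subset X_0$, the restriction of $\psi_t$ to $\partial U$ varies continuously in $t$, while the intertwining identity $\widetilde{f} \circ \psi_t = \varphi_t \circ f$ combined with $\varphi_t(f(p)) \equiv \widetilde{f}(\psi_0(p))$ forces the unique enclosed preimage to be $\psi_0(p)$ for every $t$. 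The hard part is the transcendental case, where $f^{-1}(A)$ is countably infinite and $\widetilde{f}$ has infinite degree on the relevant complements; standard covering-space theory still applies verbatim, but the key additional input is Lemma \ref{lemm: extensions}, which was formulated precisely to accommodate at most countable closed puncture sets and thus allows the lifting argument to be closed off uniformly in all cases at once.
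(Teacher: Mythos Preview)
Your argument is correct and follows essentially the same route as the paper: lift the isotopy through the covering $\widetilde{f}$ on the complement of the singular fibers, check the lifts are homeomorphisms, and extend across the punctures. The paper phrases the lifting slightly differently---it invokes \cite[Lemma~2.7]{Astorg_Benini_Fagella} to produce a self-isotopy $(\phi_t)$ of $Z = X - f^{-1}(A)$ satisfying $\varphi_t \circ f = \varphi_0 \circ f \circ \phi_t$, and then sets $\psi_1 := \psi_0 \circ \phi_0^{-1} \circ \phi_1$---but composing with $\psi_0$ shows this is exactly your family $\psi_t$.

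One technical slip: you invoke Lemma~\ref{lemm: extensions} for the extension step, but that lemma is formulated for countably-punctured \emph{spheres}, whereas Proposition~\ref{prop: isotopy lifting property} concerns arbitrary connected surfaces $X,\widetilde{X}$. Here $f^{-1}(A)$ is merely closed and discrete in $X$ (since $A$ is finite and $f$ is discrete), and the extension of each $\psi_t$ across an \emph{isolated} puncture is the elementary local fact you sketch in your final paragraph---no global sphere hypothesis is needed, and Lemma~\ref{lemm: extensions} (which handles possibly non-discrete countable closed sets) is both inapplicable and overkill. The paper likewise extends pointwise using only that the punctures are isolated. So your proof stands once you replace the citation of Lemma~\ref{lemm: extensions} with the direct local argument you already outline.
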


\begin{proof}
    Let $(\varphi_t)_{t \in \I}$ be the corresponding isotopy. From the definition of a singular value, it follows that the restrictions
    $\varphi_t \circ f | Z  \colon Z \to T$ are covering maps for each $t \in \I$, where $Z := X - f^{-1}(A)$ and $T := Y - A$. Therefore, \cite[Lemma 2.7]{Astorg_Benini_Fagella} implies the existence of an isotopy $(\phi_t)_{t \in \I}$ in~$Z$ such that $\varphi_t \circ f = \varphi_0 \circ f \circ \phi_t$. Each homeomorphism $\phi_t \colon Z \to Z$ extends to a self-homeomorphism of the entire surface $X$ since all points of the set $X - Z = f^{-1}(A)$ are isolated. Moreover, the homotopy $(\phi_t)_{t \in \I}$ can be viewed as an isotopy in $X$ rel.\ ~$f^{-1}(A)$.
    
    At the same time, $\varphi_0 \circ f = \varphi_0 \circ f \circ \phi_0$ and, therefore, we have the following:
    $$
        \varphi_1 \circ f = \varphi_0 \circ f \circ \phi_1 = (\varphi_0 \circ f \circ \phi_0) \circ \phi_0^{-1} \circ \phi_1 = \varphi_0 \circ f \circ \phi_0^{-1} \circ \phi_1 = \widetilde{f} \circ (\psi_0 \circ \phi_0^{-1} \circ \phi_1).
    $$
    Thus, we can set $\psi_1 := \psi_0 \circ \phi_0^{-1} \circ \phi_1$, and $(\psi_0 \circ \phi_0^{-1} \circ \phi_t)_{t \in \I}$ provides the required isotopy rel.\ $f^{-1}(A)$. Finally, $\psi_1$ is orientation-preserving since $f$ and $\widetilde{f}$ are local orientation-preserving homeomorphisms outside the sets of their critical points.
\end{proof}

We say that a topological surface $X$ is a \textit{countably-punctured sphere}, if $X = S^2 - E$, where $E$ is a closed subset of $S^2$ that is at most countable. It is worth noting that we allow the set $E$ to be finite or even empty. In this paper, we mainly focus on topologically holomorphic maps $f \colon X \to S^2$ defined on a countably-punctured sphere $X$. Additionally, we always assume that $f$ cannot be extended as a topologically holomorphic map to a neighborhood of any of the points of the set $S^2 - X$.
For simplicity, we will use the notation $f \colon S^2 \dto S^2$ in order to indicate that $f$ might not be defined on at most countable closed set of $S^2$. Similar to the holomorphic case, any point $e \in S^2 - X$ is referred as an \textit{essential singularity} of the map $f$, which is \textit{isolated} if it is an isolated point of the set $S^2 - X$. We say that a topologically holomorphic map $f \colon S^2 \dto S^2$ is \textit{transcendental} if it has an essential singularity. Given our previous assumptions on the map $f$, this is equivalent to saying that $f$ has infinite topological degree. Likewise, for a holomorphic map $g$ defined everywhere on~$\widehat{\C}$, potentially excluding at most countable closed set of singularities, we use the notation $g \colon \widehat{\C} \dto \widehat{\C}$, and we say that $g \colon \widehat{\C} \dto \widehat{\C}$ is holomorphic.

Let $\varphi\colon S^2 \to \widehat{\C}$ be an orientation-preserving-homeomorphism and $f \colon X \to S^2$ be a topologically holomorphic map of finite type, where $X$ is a countably-punctured sphere. According to item \eqref{it: pulling back} of Definition \ref{def: top hol}, there exists an orientation-preserving homeomorphism $\psi \colon X \to R$, where $R$ is a connected Riemann surface, so that the map $\varphi \circ f \circ \psi^{-1} \colon R \to \widehat{\C}$ is holomorphic. Moreover, according to Remark \ref{rem: uniqueness}, the conformal type of $R$ is uniquely determined for a given homeomorphism $\varphi$. 
\begin{definition}\label{def: admissible type}
   We define $f$ as a topologically holomorphic map of \textit{admissible type} if the Riemann surface $R$, as described above, is biholomorphic to a \textit{countably-punctured Riemann sphere} for any orientation-preserving homeomorphism $\varphi \colon S^2 \to \widehat{\C}$.
\end{definition}

In fact, the following proposition shows that Definition \ref{def: admissible type} is independent of the choice of~$\varphi$.

\begin{proposition}\label{prop: admissible type}
    Suppose that $f \colon X \to S^2$ is a finite-type topologically holomorphic map, where $X$ is a countably-punctured sphere. If $f$ is of admissible type, then for every orientation-preserving homeomorphism $\varphi \colon S^2 \to \widehat{\C}$, there exists an orientation-preserving homeomorphism $\psi \colon X \to R$ such that $R$ is a countably-punctured Riemann sphere and the map $\varphi \circ f \circ \psi^{-1} \colon R \to \widehat{\C}$ is holomorphic. Moreover, such $\psi$ is unique up to post-composition with a M\"obius transformation. 
\end{proposition}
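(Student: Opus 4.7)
The plan is to split the argument into existence and uniqueness. The existence part will reduce immediately to item (\ref{it: pulling back}) of Definition \ref{def: top hol} and the admissibility hypothesis, while uniqueness will combine the essential uniqueness described in Remark \ref{rem: uniqueness} with the second assertion of Lemma \ref{lemm: extensions}.

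For existence, I would apply item (\ref{it: pulling back}) of Definition \ref{def: top hol} to the given orientation-preserving homeomorphism $\varphi \colon S^2 \to \widehat{\C}$, obtaining a Riemann surface $R$ and an orientation-preserving homeomorphism $\psi \colon X \to R$ such that $\varphi \circ f \circ \psi^{-1} \colon R \to \widehat{\C}$ is holomorphic. The admissibility hypothesis on $f$ then guarantees that $R$ is biholomorphic to some countably-punctured Riemann sphere $\widehat{\C} - E'$, say via a biholomorphism $\eta \colon R \to \widehat{\C} - E'$. Replacing the pair $(\psi, R)$ by $(\eta \circ \psi, \widehat{\C} - E')$ gives the desired realization, with $R$ literally of the form $\widehat{\C} - E'$ and the composition $\varphi \circ f \circ (\eta \circ \psi)^{-1} = (\varphi \circ f \circ \psi^{-1}) \circ \eta^{-1}$ still holomorphic.

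For uniqueness, suppose $(\psi_1, R_1)$ and $(\psi_2, R_2)$ are two pairs satisfying the conclusion of the proposition, so in particular each $R_i = \widehat{\C} - E_i$ for some at most countable closed $E_i \subset \widehat{\C}$. By Remark \ref{rem: uniqueness}, there exists a biholomorphism $\theta \colon R_1 \to R_2$ such that $\psi_2 = \theta \circ \psi_1$. Since the $E_i$ are at most countable closed subsets of $\widehat{\C}$, this biholomorphism is exactly of the type considered in the second assertion of Lemma \ref{lemm: extensions}, and therefore extends to a M\"obius transformation $\Theta$ of $\widehat{\C}$. It follows that $\psi_2 = \Theta \circ \psi_1$, which is precisely the claimed uniqueness of $\psi$ up to post-composition with a M\"obius transformation.

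The main --- and essentially the only nontrivial --- step is this final appeal to Lemma \ref{lemm: extensions}: the fact that $R_1$ and $R_2$ are genuinely of the form $\widehat{\C} - E$ (rather than merely abstract Riemann surfaces biholomorphic to such) is what upgrades the abstract biholomorphism $\theta$ produced by Remark \ref{rem: uniqueness} to a rigid M\"obius self-transformation of $\widehat{\C}$. Without realizing $R_1$ and $R_2$ as open subsets of $\widehat{\C}$, one would only obtain uniqueness up to biholomorphism of abstract Riemann surfaces, which is strictly weaker than the statement demanded by the proposition.
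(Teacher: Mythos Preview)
Your uniqueness argument is essentially identical to the paper's (which also just cites Remark~\ref{rem: uniqueness} and Lemma~\ref{lemm: extensions}), and your existence argument is formally correct given the literal wording of Definition~\ref{def: admissible type}, which already quantifies over \emph{all} orientation-preserving homeomorphisms~$\varphi$.

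However, you have bypassed the actual substance of the proposition. The sentence immediately preceding it reads: ``In fact, the following proposition shows that Definition~\ref{def: admissible type} is independent of the choice of~$\varphi$.'' The paper's proof accordingly only \emph{uses} that the target Riemann surface is a countably-punctured sphere for a \emph{single} homeomorphism $\varphi_1$, and then shows that this forces the same conclusion for every other $\varphi_2$. This is a genuine analytic step: the paper replaces $\varphi_2$ by a diffeomorphism isotopic to it rel.~$S_f$ (using Proposition~\ref{prop: isotopy lifting property}), pulls back the resulting Beltrami form through $g_1$, and invokes the Measurable Riemann Mapping Theorem to produce a quasiconformal straightening $\theta$ on all of~$\widehat{\C}$. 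The upshot is that admissibility for one $\varphi$ implies admissibility for all, so that in practice one only ever has to check a single chart. Your argument, by contrast, assumes the ``for all $\varphi$'' conclusion as part of the hypothesis and thus proves nothing beyond a tautology plus the M\"obius rigidity for uniqueness.
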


\begin{proof}
    Since $f$ has an admissible type, there exist two orientation-preserving homeomorphisms $\varphi_1 \colon S^2 \to \widehat{\C}$ and $\psi_1 \colon X \to \widehat{\C} - E$, where $E$ is at most countable closed subset of $\widehat{\C}$, such that the map $g_1 := \varphi_1 \circ f \circ \psi_1^{-1} \colon \widehat{\C} - E \to \widehat{\C}$ is holomorphic. Now let $\varphi_2 \colon S^2 \to \widehat{\C}$ be any other orientation-preserving homeomorphism. By item \eqref{it: pulling back} of Proposition \ref{def: top hol}, there exists a connected Riemann surface $R$ and an orientation-preserving homeomorphism $\psi_2 \colon X \to R$ so that the map $g_2 := \varphi_2 \circ f \circ \psi_2^{-1} \colon R \to \widehat{\C}$ is holomorphic.
    In particular, 
    $g_2 = \widetilde{\varphi} \circ g_1 \circ \widetilde{\psi}^{-1}$ on $R$, where $\widetilde{\varphi} := \varphi_2 \circ \varphi_1^{-1} \colon \widehat{\C} \to \widehat{\C}$ and $\widetilde{\psi} := \psi_2 \circ \psi_1^{-1} \colon \widehat{\C} - E \to R$ are orientation-preserving homeomorphisms. Note that $\widetilde{\varphi}$ is isotopic rel.\ $S_f$ to some \textit{diffeomorphism} of $\widehat{\C}$ (see, for instance, \cite[Theorem 1.13]{farb_margalit}). According to Proposition \ref{prop: isotopy lifting property}, without loss of generality, we can assume that $\widetilde{\varphi}$ is itself a diffeomorphism of $\widehat{\C}$, and therefore, it is a \textit{quasiconformal mapping}. Let $\mu_{\widetilde{\varphi}} := \widetilde{\varphi}^*\mu_0$ be the Beltrami form corresponding to $\widetilde{\varphi}$, where $\mu_0$ is the zero Beltrami form on $\widehat{\C}$ (see \cite[Sections 1.2 and 1.3]{branner_fagella_2014}). We define another Beltrami form on $\widehat{\C}$ by
    $$
    \mu(z) = 
    \begin{cases}
        (g_1)^*\mu_{\widetilde{\varphi}}(z), \text{ if } z \in \widehat{\C} - E,\\
        0, \text{ otherwise}.
    \end{cases}
    $$
    By a standard argument (see, for example, \cite[Definition 1.34]{branner_fagella_2014}) involving Riemann Measurable Mapping Theorem \cite[Theorem 1.28]{branner_fagella_2014}, it is easy to establish the existence of a quasiconformal mapping $\theta \in \Homeo^+(\C)$ such that $\mu_{\theta} = \mu$ almost everywhere on $\widehat{\C}$ and the map $g := \widetilde{\varphi} \circ g_1 \circ \theta^{-1} \colon \widehat{\C} - \theta(E) \to \widehat{\C}$ is holomorphic. Thus, applying Remark \ref{rem: uniqueness} to the map $g_1$ and pairs $(\widetilde{\psi}, R)$ and $(\theta, \widehat{\C} - \theta(E))$, we conclude that $R$ must be biholomorphic to a countably-punctured Riemann sphere.

    The uniqueness of $\psi$ as stated in this proposition can be deduced from Lemma~\ref{lemm: extensions} and Remark~\ref{rem: uniqueness}.
\end{proof}

Further, we will focus on finite-type topologically holomorphic maps $f \colon S^2 \dto S^2$ of admissible type. These maps serve as topological models for the family of holomorphic maps $g \colon \widehat{\C} \dto \widehat{\C}$ having finitely many singular values. The most straightforward examples of such maps are branched self-covers of $S^2$. The singular sets of these maps consist of finitely many critical values, and they do not have any essential singularities. A finite-type topologically holomorphic map $f \colon S^2 \dto S^2$ with a single essential singularity is of admissible type if and only if $f$ is of \textit{parabolic type} (see \cite[Section 2.2]{small_postsingular_set}).

Let $f \colon S^2 \dto S^2$ be a finite-type topologically holomorphic map of admissible type. Then Great Picard's Theorem and Lemma \ref{lemm: small sets} imply that in any neighborhood of an essential singularity of such a map $f$, every value is attained infinitely often with at most two exceptions. In particular, $f$ can have at most two \textit{omitted values}, i.e., points~$p$ in $S^2$ such that the preimage $f^{-1}(p)$ is empty. Furthermore, each omitted value is an asymptotic value of~$f$. Additionally, observe that if $A \subset S^2$ is a finite set with $|A| \geq 3$ and $S_f \subset A$, then the~restriction
$$
    f|S^2 - \overline{f^{-1}(A)}\colon S^2 - \overline{f^{-1}(A)} \to S^2 - A
$$
is a covering map. Note that the closure $\overline{f^{-1}(A)}$ consists of $f^{-1}(A)$ along with all essential singularities of the map $f$.

The main motivation for introducing maps with countably many essential singularities is that the family of topologically holomorphic maps with at most one (or even finitely many) essential singularities is not closed under composition. Indeed, it is straightforward to construct two such maps, each with a single essential singularity, whose composition has infinitely many of them. However, the class of finite-type topologically holomorphic maps $f \colon S^2 \dto S^2$ of admissible type is closed under the operation of composition.

\begin{proposition}\label{prop: composition}
    Let $f_1 \colon X_1 \to S^2$ and $f_2 \colon X_2 \to S^2$ be two finite-type topologically holomorphic maps, where $X_1$ and $X_2$ are countably-punctured spheres. Then the composition $f_1 \circ f_2 \colon X_2 \cap f_2^{-1}(X_1) \to S^2$ is also a finite-type topologically holomorphic map, with $S_{f_1 \circ f_2} = S_{f_1} \cup f_1(S_{f_2})$. Moreover, if both $f_1$ and $f_2$ are of admissible type, then the composition $f_1 \circ f_2 \colon S^2 \dto S^2$ is of admissible type as well.
\end{proposition}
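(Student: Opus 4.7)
My plan is to verify the three assertions in turn: that the domain of the composition is a countably-punctured sphere on which $f_1 \circ f_2$ is topologically holomorphic, that the singular set has the asserted description, and finally that admissibility is preserved.

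First, I would identify the domain. Writing $X_i = S^2 - E_i$ with $E_i$ at most countable and closed in $S^2$, one has
\[
X_2 \cap f_2^{-1}(X_1) = S^2 - \bigl(E_2 \cup f_2^{-1}(E_1)\bigr).
\]
Each fibre $f_2^{-1}(e)$, $e \in E_1$, is discrete in $X_2$ since $f_2$ is topologically holomorphic, so $f_2^{-1}(E_1)$ is at most countable. Any accumulation point of this set in $S^2$ that is not already contained in it must lie in $E_2$, for otherwise continuity of $f_2$ on $X_2$ together with closedness of $E_1$ would be violated. Hence $E_2 \cup f_2^{-1}(E_1)$ is closed and at most countable, and the domain is indeed a countably-punctured sphere. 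Topological holomorphicity of $f_1 \circ f_2$ on this domain is then a local statement: at a point $p$ in the domain, Definition \ref{def: top hol}(1) applied to $f_2$ near $p$ and to $f_1$ near $f_2(p)$ produces charts in which $f_2$ takes the form $z \mapsto z^{d_2}$ and $f_1$ takes the form $w \mapsto w^{d_1}$; absorbing the discrepancy between the two charts at $f_2(p)$ into an orientation-preserving homeomorphism of a common disk, one obtains the local normal form $z \mapsto z^{d_1 d_2}$ for the composition.

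For the singular-set formula I would argue both inclusions. If $v \notin S_{f_1} \cup f_1(S_{f_2})$, pick a small neighborhood $U$ of $v$ evenly covered by $f_1$; since $v \notin f_1(S_{f_2})$, one may shrink $U$ so that every sheet of $f_1^{-1}(U)$ is disjoint from $S_{f_2}$, whereupon $f_2$ restricts to an unbranched covering over each sheet and consequently $f_1 \circ f_2$ is an unbranched covering over $U$. Hence $v$ is regular for the composition. For the reverse inclusion, a critical value of $f_1$ at $w$ either has a preimage under $f_2$ in the domain of the composition, supplying a critical point of $f_1 \circ f_2$ over $v$, or else $w$ is omitted by $f_2$, in which case $w$ is an asymptotic value of $f_2$ and $v = f_1(w) \in f_1(S_{f_2})$; asymptotic values of $f_1$ are handled by lifting an asymptotic curve through $f_2$ along the covering $f_2 \colon X_2 - \overline{f_2^{-1}(A)} \to S^2 - A$ for sufficiently large finite $A \supseteq S_{f_2}$ with $|A| \geq 3$, while asymptotic values of $f_2$ transport directly to $f_1 \circ f_2$ via $f_1$. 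In all cases $S_{f_1 \circ f_2} = S_{f_1} \cup f_1(S_{f_2})$, which is finite.

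For the admissibility claim I would fix an orientation-preserving homeomorphism $\varphi \colon S^2 \to \widehat{\C}$. By admissibility of $f_1$ and Proposition \ref{prop: admissible type}, there is $\psi_1 \colon X_1 \to R_1$ with $R_1 = \widehat{\C} - E_1'$ a countably-punctured Riemann sphere and $\varphi \circ f_1 \circ \psi_1^{-1}$ holomorphic on $R_1$. Lemma \ref{lemm: extensions} extends $\psi_1$ to a self-homeomorphism $\bar{\psi}_1 \colon S^2 \to \widehat{\C}$; applying Proposition \ref{prop: admissible type} to $f_2$ with $\bar{\psi}_1$ in the role of the target homeomorphism yields $\psi_2 \colon X_2 \to R_2$ with $R_2 = \widehat{\C} - E_2'$ and $\bar{\psi}_1 \circ f_2 \circ \psi_2^{-1}$ holomorphic on $R_2$. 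Setting $U := \psi_2(X_2 \cap f_2^{-1}(X_1))$, on $U$ one has
\[
\varphi \circ (f_1 \circ f_2) \circ \psi_2^{-1} = \bigl(\varphi \circ f_1 \circ \psi_1^{-1}\bigr) \circ \bigl(\psi_1 \circ f_2 \circ \psi_2^{-1}\bigr),
\]
which is a composition of holomorphic maps and therefore holomorphic. Extending $\psi_2$ to a self-homeomorphism $\bar{\psi}_2$ of $S^2$ via Lemma \ref{lemm: extensions} and transporting the observation of the first paragraph by $\bar{\psi}_2$, the complement $\widehat{\C} - U$ equals $E_2' \cup \bar{\psi}_2(f_2^{-1}(E_1))$, which is closed and at most countable in $\widehat{\C}$. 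Hence $U$ is a countably-punctured Riemann sphere, and Definition \ref{def: admissible type} is satisfied for $f_1 \circ f_2$.

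The main obstacle I anticipate is the converse direction of the singular-set formula, and more precisely the lifting of an asymptotic curve of $f_1$ through $f_2$: one needs to arrange that the lift itself exits every compact subset of its domain so as to genuinely witness $v$ as an asymptotic value of $f_1 \circ f_2$, which requires a careful appeal to path-lifting for the covering above together with the finiteness of $S_{f_2}$.
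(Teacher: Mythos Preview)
Your proposal is correct and, for the admissibility assertion, follows essentially the same route as the paper: apply Proposition~\ref{prop: admissible type} (together with the extension Lemma~\ref{lemm: extensions}) first to $f_1$ with the given $\varphi$, then to $f_2$ with the resulting homeomorphism, and observe that the composite straightening is holomorphic on a countably-punctured Riemann sphere. The paper's proof is more terse on the first two assertions, simply deferring to Definition~\ref{def: top hol} and the definition of a singular value, whereas you spell out the domain analysis, the local normal form for the composition, and both inclusions of the singular-set identity; this extra detail is sound and the obstacle you flag (lifting an asymptotic curve of $f_1$ through the covering induced by $f_2$) is real but routine once the curve is perturbed off the finite set $A$.
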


\begin{proof}
    The first part of this proposition easily follows from Definition \ref{def: top hol} and the definition of a singular value.

    Let $\varphi \colon S^2 \to \widehat{\C}$ be an orientation-preserving homeomorphism. Then by Lemma \ref{lemm: extensions} and Proposition \ref{prop: admissible type}, there exists another orientation-preserving homeomorphism $\psi \colon S^2 \to \widehat{\C}$ such that $\varphi \circ f \circ \psi^{-1} \colon \psi(X_1) \to \widehat{\C}$ is holomorphic.
    Applying the same argument to the map $f_2$ and the homeomorphism $\psi$, we derive the existence of an orientation-preserving homeomorphism $\theta \colon S^2 \to \widehat{\C}$ such that $\psi \circ f \circ \theta^{-1} \colon \theta(X_2) \to~\widehat{\C}$ is holomorphic. Hence,
    $$
        \varphi \circ (f_1 \circ f_2) \circ \theta^{-1} = (\varphi \circ f_1 \circ \psi^{-1}) \circ (\psi \circ f_2 \circ \theta^{-1}) \colon \theta(X_2 \cap f_2^{-1}(X_1)) \to \widehat{\C}
    $$
    is holomorphic as well. Obviously, $\theta(X_2 \cap f_2^{-1}(X_1))$ is a countably-punctured Riemann sphere, and therefore, $f_1 \circ f_2$ is of admissible type.
\end{proof}

Finally, observe that Lemma \ref{lemm: extensions} allows us to state the following version of Proposition~\ref{prop: isotopy lifting property}.

\begin{corollary}\label{corr: isotopy lifting property}
    Let $f \colon X \to S^2$ and $\widetilde{f}\colon \widetilde{X} \to S^2$ be finite-type topologically holomorphic maps, where $X$ and $\widetilde{X}$ are countably-punctured spheres. Suppose that $\varphi_0 \circ f = \widetilde{f} \circ \psi_0$ for some $\varphi_0, \psi_0 \in \Homeo^+(S^2)$.
    Let $A \subset S^2$ be a finite set containing $S_{f}$ and $\varphi_1 \in \Homeo^+(S^2)$ is isotopic rel.\ $A$ to $\varphi_0$. Then $\varphi_1 \circ f = \widetilde{f} \circ \psi_1$ for some $\psi_1 \in \Homeo^+(S^2)$ isotopic rel.\ $f^{-1}(A) \cup (S^2 - X) \supset \overline{f^{-1}(A)}$ to $\psi_0$.
\end{corollary}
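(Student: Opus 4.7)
The plan is to reduce the statement to Proposition~\ref{prop: isotopy lifting property} applied on the countably-punctured spheres $X$ and $\widetilde{X}$, and then extend the resulting isotopy back to $S^2$ using Lemma~\ref{lemm: extensions}. As a preliminary step, I would observe that $\psi_0$ must carry $X$ bijectively onto $\widetilde{X}$, and therefore $S^2 - X$ bijectively onto $S^2 - \widetilde{X}$. Indeed, for any $p \in X$, the functional equation $\varphi_0 \circ f = \widetilde{f} \circ \psi_0$ forces $\psi_0(p) \in \widetilde{X}$, so that $\widetilde{f} \circ \psi_0$ is defined at $p$; the reverse inclusion is forbidden by our standing convention that $f$ does not admit a continuous extension across any of its essential singularities, since otherwise the equation would provide precisely such an extension.

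Once this is in place, I would apply Proposition~\ref{prop: isotopy lifting property} with source surface $X$, target surface $\widetilde{X}$, the homeomorphisms $\varphi_0, \varphi_1 \in \Homeo^+(S^2)$, and $\psi_0|X \colon X \to \widetilde{X}$. This yields an orientation-preserving homeomorphism $\psi_1' \colon X \to \widetilde{X}$ with $\varphi_1 \circ f = \widetilde{f} \circ \psi_1'$, together with an isotopy $\psi_t' = \psi_0 \circ \phi_t$ in $X$ rel.\ $f^{-1}(A)$, where $\phi_0 = \id_X$. By Lemma~\ref{lemm: extensions}, each $\phi_t$ extends uniquely to some $\widehat{\phi}_t \in \Homeo^+(S^2)$ with $\widehat{\phi}_t(S^2 - X) = S^2 - X$, and I set $\psi_1 := \psi_0 \circ \widehat{\phi}_1$, which immediately inherits the required functional equation on $X$.

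It then remains to show that $(\psi_0 \circ \widehat{\phi}_t)_{t \in \I}$ is actually an isotopy of $S^2$ rel.\ $f^{-1}(A) \cup (S^2 - X)$. Once joint continuity of the extended family is established, the fact that $\widehat{\phi}_t$ fixes $S^2 - X$ pointwise follows from a soft topological observation: the path $t \mapsto \widehat{\phi}_t(p)$ starts at $p$ and stays inside $S^2 - X$, which is closed in $S^2$ and at most countable by Lemma~\ref{lemm: small sets}, hence totally disconnected, so any continuous path in it is constant. The main technical obstacle I anticipate is precisely establishing joint continuity of $(p, t) \mapsto \widehat{\phi}_t(p)$ at points $(p_0, t_0)$ with $p_0 \in S^2 - X$: continuity on $X \times \I$ is immediate from the isotopy in $X$, but the behavior of the lift near essential singularities is not automatic. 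I would handle this by combining the uniform continuity of $(\varphi_t)$ on the compact space $S^2 \times \I$ with the lifting identity $\varphi_0 \circ f \circ \phi_t = \varphi_t \circ f$ to control the displacement $\phi_t(p_n)$ for $p_n$ approaching $p_0$, and then invoke the uniqueness clause in Lemma~\ref{lemm: extensions} to identify the limit as the prescribed extension. Together with $\widehat{\phi}_t|f^{-1}(A) = \id$ inherited from the isotopy in $X$, this completes the argument.
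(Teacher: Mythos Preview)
Your approach is correct and matches the paper's own: the paper merely states that Lemma~\ref{lemm: extensions} allows one to pass from Proposition~\ref{prop: isotopy lifting property} to this corollary, without spelling out the details. You have filled in precisely those details, including the observation that $\psi_0(X) = \widetilde{X}$ and, more importantly, correctly identifying the joint continuity of the extended family $(\widehat{\phi}_t)$ on $S^2 \times \I$ as the one nontrivial step the paper leaves implicit; your sketch for handling it via the lifting identity and compactness is the right idea.
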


Analogously to \cite[Corollary 2.3]{small_postsingular_set}, we have the following result.

\begin{corollary}\label{corr: homotopy lifting for curves}
    Let $f \colon X \to S^2$ be a finite-type topologically holomorphic map, where $X$ is a countably-punctured topological sphere. Let $A \subset S^2$ be a finite set containing $S_f$, and suppose that $\gamma_0$ is a simple closed curve in $S^2 - A$. Let $\widetilde{\gamma}_0 \subset f^{-1}(\gamma)$ be a simple closed curve with $\deg(f|\widetilde{\gamma}_0\colon \widetilde{\gamma}_0 \to \gamma_0) = d$. If $\gamma_1$ is a simple closed curve that is homotopic in $S^2 - A$ to $\gamma_0$, then there exists a simple closed curve $\widetilde{\gamma}_1 \subset f^{-1}(\gamma_1)$ such that $\widetilde{\gamma}_0$ and $\widetilde{\gamma}_1$ are homotopic in $X -  f^{-1}(A) \subset S^2 - \overline{f^{-1}(A)}$ and $\deg(f|\widetilde{\gamma}_1\colon \widetilde{\gamma}_1 \to \gamma_1) = d$.

    Moreover, suppose that $D_0$ and $\widetilde{D}_0$ are connected components of $S^2 - \gamma_0$ and $S^2 - \widetilde{\gamma}_0$, respectively, such that $f|\widetilde{D}_0 \colon \widetilde{D}_0 \to D_0$ is a homeomorphism. Then there exist connected components $D_1$ and $\widetilde{D}_1$ of $S^2 - \gamma_1$ and $S^2 - \widetilde{\gamma}_1$, respectively, such that $D_0$ is homotopic in $S^2 - A$ to $D_1$, $\widetilde{D}_0$ is homotopic in $X -  f^{-1}(A) \subset S^2 - \overline{f^{-1}(A)}$ to $\widetilde{D}_1$, and $f|\widetilde{D}_1 \colon \widetilde{D}_1 \to D_1$ is a homeomorphism.
\end{corollary}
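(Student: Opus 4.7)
The plan is to reduce the statement to the isotopy lifting property stated in Corollary~\ref{corr: isotopy lifting property}. First, I would promote the given homotopy between $\gamma_0$ and $\gamma_1$ in $S^2 - A$ to an ambient isotopy $(\varphi_t)_{t \in \I}$ of $S^2$ relative to $A$ with $\varphi_0 = \id_{S^2}$ and $\varphi_1(\gamma_0) = \gamma_1$. This is the classical fact that on an orientable surface two isotopic (in particular, homotopic) essential simple closed curves are related by an ambient isotopy supported in the complement of any prescribed finite set, and a non-essential curve can be contracted in $S^2 - A$ to lie near a point. Since $A$ is finite, $S^2 - A$ is a surface of finite type, so this step is standard.

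Next, applying Corollary~\ref{corr: isotopy lifting property} with $\widetilde{f} = f$, $\varphi_0 = \id_{S^2}$, and $\psi_0 = \id_X$, I obtain an isotopy $(\psi_t)_{t \in \I}$ of $S^2$, relative to $f^{-1}(A) \cup (S^2 - X) \supset \overline{f^{-1}(A)}$, such that $\varphi_t \circ f = f \circ \psi_t$ on $X$ for every $t \in \I$. Setting $\widetilde{\gamma}_1 := \psi_1(\widetilde{\gamma}_0)$, the identity
\[
    f(\widetilde{\gamma}_1) \;=\; f\bigl(\psi_1(\widetilde{\gamma}_0)\bigr) \;=\; \varphi_1\bigl(f(\widetilde{\gamma}_0)\bigr) \;=\; \varphi_1(\gamma_0) \;=\; \gamma_1
\]
shows that $\widetilde{\gamma}_1 \subset f^{-1}(\gamma_1)$. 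Since $\psi_t$ fixes $\overline{f^{-1}(A)}$ pointwise and restricts to a self-homeomorphism of $X$, the restriction of $(\psi_t)$ to $\widetilde{\gamma}_0$ is a homotopy in $X - f^{-1}(A) = S^2 - \overline{f^{-1}(A)}$ connecting $\widetilde{\gamma}_0$ to $\widetilde{\gamma}_1$. The degree statement follows from the commutation $\varphi_1 \circ (f|\widetilde{\gamma}_0) = (f|\widetilde{\gamma}_1) \circ \psi_1$, which conjugates the covering $f|\widetilde{\gamma}_0 \to \gamma_0$ of degree $d$ to the covering $f|\widetilde{\gamma}_1 \to \gamma_1$.

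For the second half, I would set $D_1 := \varphi_1(D_0)$ and $\widetilde{D}_1 := \psi_1(\widetilde{D}_0)$. Because $\varphi_1$ and $\psi_1$ are orientation-preserving self-homeomorphisms of $S^2$ sending $\gamma_0$ to $\gamma_1$ and $\widetilde{\gamma}_0$ to $\widetilde{\gamma}_1$ respectively, $D_1$ and $\widetilde{D}_1$ are connected components of $S^2 - \gamma_1$ and $S^2 - \widetilde{\gamma}_1$; the isotopies $(\varphi_t)$ and $(\psi_t)$ provide the required homotopies in $S^2 - A$ and in $X - f^{-1}(A)$. Finally, from the relation $\varphi_t \circ f = f \circ \psi_t$ on $X$ we obtain $f|\widetilde{D}_1 = \varphi_1 \circ (f|\widetilde{D}_0) \circ (\psi_1|\widetilde{D}_1)^{-1}$, which is a composition of homeomorphisms onto $D_1$.

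The main obstacle I anticipate is the first step: upgrading the given ambient homotopy in $S^2 - A$ to a genuine ambient isotopy of $S^2$ relative to $A$. Without this, one cannot invoke Corollary~\ref{corr: isotopy lifting property}. This is routine for simple closed curves on a punctured sphere (via Baer-type theorems), but one must be careful to make the isotopy trivial on $A$ and supported away from $A$ so that the hypothesis of Corollary~\ref{corr: isotopy lifting property} is satisfied; the essential singularities $S^2 - X$, being disjoint from $A$, pose no additional difficulty because the isotopy lifting property automatically keeps them fixed.
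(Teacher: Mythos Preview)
Your approach is correct and is exactly what the paper intends: the corollary is stated immediately after Corollary~\ref{corr: isotopy lifting property} without proof (only a reference to the analogous \cite[Corollary~2.3]{small_postsingular_set}), and deriving it by upgrading the homotopy of simple closed curves to an ambient isotopy rel.\ $A$ and then invoking Corollary~\ref{corr: isotopy lifting property} is the implicit argument. The homotopy-to-ambient-isotopy step you flag as the potential obstacle is precisely the standard fact the paper itself uses later (see the proof of Proposition~\ref{prop: forgetting points and levy cycles}, citing \cite[Theorem~A.3]{BuserGeometry} and \cite[Sections~1.2.5--1.2.6]{farb_margalit}), so there is no gap.

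One small imprecision: Corollary~\ref{corr: isotopy lifting property} as stated only produces $\psi_1$ isotopic rel.\ $f^{-1}(A)\cup(S^2-X)$ to $\psi_0$, not an isotopy $(\psi_t)$ satisfying $\varphi_t\circ f=f\circ\psi_t$ for every $t$. This does not matter for your argument---the commutation at $t=1$ gives $\widetilde\gamma_1\subset f^{-1}(\gamma_1)$ and the degree, while any isotopy rel.\ $f^{-1}(A)\cup(S^2-X)$ from $\id_{S^2}$ to $\psi_1$ already carries $\widetilde\gamma_0$ to $\widetilde\gamma_1$ through $X-f^{-1}(A)$---but you may want to phrase it that way.
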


\subsection{Thurston maps} \label{subsec: thurston maps}

Let $f \colon X \to S^2$ be a topologically holomorphic map, where $X = S^2 - E$ is a countably-punctured sphere. Then we define the orbit $\mathcal{O}_f(p)$ of $p \in S^2$ under~$f$~as
$$
    \mathcal{O}_f(p) := \{q \in S^2: q = f^{\circ n}(p) \text{ for some } n \geq 0\}.
$$
It is worth to note that $f^{\circ n}(p)$ might be defined only for finitely many $n$ if $f^{\circ m}(p) \in E$ for some $m \geq 0$. In this case, we say that the orbit of $p$ \textit{terminates} after $m$ iterations of $f$.

The \textit{postsingular set} $P_f$ of the map $f$ is defined as the union of all forward orbits of the singular values of $f$. We say that $f \colon S^2 \dto S^2$ is \textit{postsingularly finite} (\textit{psf} in short) if the set $P_f$ is finite, i.e., $f$ has finitely many singular values and each of them eventually becomes periodic or lands on an essential singularity of~$f$ under the iteration. 
Postsingularly finite topologically holomorphic maps of finite degree are also called \textit{postcritically finite} (\textit{pcf} in short), and their \textit{postsingular values} are called \textit{postcritical}, as their singular values are always~critical.

\begin{definition}\label{def: pseudo-invariant set}
    For a topologically holomorphic map $f\colon S^2 \dto S^2$, the subset $A \in S^2$ is called \textit{$f$-pseudo-invariant} if the orbit of each $a \in A$ belongs to the set $A$.
\end{definition}


Every $f$-invariant set is $f$-pseudo-invariant and, if the map $f$ has no essential singularities, the converse holds as well. Note that if $|A| \geq 3$ and $f$ is a finite-type topologically holomorphic map of admissible type, then the set $A$ is $f$-pseudo-invariant if and only if $A \subset \overline{f^{-1}(A)}$. The postsingular set $P_f$ provides an example of $f$-pseudo-invariant set.

Now we are ready to state one of the key definitions of this section.

\begin{definition}\label{def: thurston map}
    A non-injective topologically holomorphic map $f\colon S^2 \dto S^2$ is called a \textit{Thurston map} if it is postsingularly finite and of admissible type.
    
    Given a finite $f$-pseudo-invariant set $A \subset S^2$ such that $P_f \subset A$, we call the pair $(f, A)$ a \textit{marked Thurston map} and $A$ its~\textit{marked~set}.
\end{definition}

We often consider marked Thurston maps in the same way as usual Thurston maps and use the notation $f\colon (S^2, A) \righttoleftarrow$ while still assuming that $f$ might not be defined on the entire sphere $S^2$. If no specific marked set is mentioned, we assume it to be $P_f$. 

\begin{definition}\label{def: isotopy of thurston maps}
    Two Thurston maps $f_1\colon (S^2, A) \righttoleftarrow$ and $f_2 \colon (S^2, A) \righttoleftarrow$ are called \textit{isotopic} (\textit{rel.\ $A$}) if there exists $\phi \in \Homeo_0^+(S^2, A)$ such that $f_1 = f_2 \circ \phi$.
\end{definition}

\begin{remark}\label{rem: isotopy of thurston maps}
    Let $f_1\colon (S^2, A) \righttoleftarrow$ and $f_2 \colon (S^2, A) \righttoleftarrow$ be two Thurston maps satisfying the relation $\phi_1 \circ f_1 = f_2 \circ \phi_2$ for some $\phi_1, \phi_2 \in \Homeo_0^+(S^2, A)$. Then it follows from Corollary~\ref{corr: isotopy lifting property} that $f_1$ and $f_2$ are isotopic rel.\ $A$.
\end{remark}

The notion of isotopy for Thurston maps depends on their common marked set. Consequently, we sometimes refer to isotopy \textit{relative $A$} (or rel.\ $A$ for short) to specify which marked set is being considered. This applies to other notions introduced below that also depend on the choice of the marked set.

We say that two (marked) Thurston maps are \textit{combinatorially equivalent} if they are ``topologically conjugate up to isotopy'':

\begin{definition}\label{def: comb equiv}
    Two Thurston maps $f_1 \colon (S^2, A_1) \rto$ and $f_2\colon (S^2, A_2) \rto$ are called combinatorially (or \textit{Thurston}) equivalent if there exist two Thurston maps $\widetilde{f}_1 \colon (S^2, A_1) \rto$ and $\widetilde{f}_2\colon (S^2, A_2) \rto$ such that:
    \begin{itemize}
        \item $f_i$ and $\widetilde{f}_i$ are isotopic rel.\ $A_i$ for each $i = 1, 2$, and

        \item $\widetilde{f}_1$ and $\widetilde{f}_2$ are conjugate via a homeomorphsim $\phi \in \Homeo^+(S^2)$, i.e., $\phi \circ \widetilde{f}_1 = \widetilde{f}_2 \circ \phi$, such that $\phi(A_1) = A_2$.
    \end{itemize}
\end{definition}

\begin{remark}\label{rem: comb equiv}
    Definition \ref{def: comb equiv} can be reformulated in a more classical way. Thurston maps $f_1 \colon (S^2, A_1) \rto$ and $f_2\colon (S^2, A_2) \rto$ are combinatorially equivalent if and only if there exist two homeomorphisms $\phi_1, \phi_2 \in \Homeo^+(S^2)$ such that $\phi_1(A_1) = \phi_2(A_1) = A_2$, $\phi_1$ and $\phi_2$ are isotopic rel.\ $A$, and $\phi_1 \circ f_1 = f_2 \circ \phi_2$.
\end{remark}

\begin{remark}
    If $A_1 = P_{f_1}$ and $A_2 = P_{f_2}$, then the condition that $\phi(A_1) = A_2$ in Definition~\ref{def: comb equiv} and the condition $\phi_1(A_1) = \phi_2(A_1) = A_2$ in Remark \ref{rem: comb equiv} can be removed since they are automatically satisfied if all other conditions hold.
\end{remark}

A Thurston map $f\colon (S^2, A)\righttoleftarrow$ is said to be \textit{realized} if it is combinatorially equivalent to a postsingularly finite holomorphic map $g \colon (\widehat{\C}, P) \rto$. If $f \colon (S^2, A) \rto$ is not realized, we say that it is \textit{obstructed}.

In the following statement we explore the case when the marked set of a Thurston map consists of at most three points.

\begin{proposition}\label{prop: small set}
    Let $f \colon (S^2, A) \rto$ be a Thurston map. The marked set $A$ must contain at least two points. Furthermore:
    \begin{enumerate}
        \item \label{it: three points} if $|A| \leq 3$, then $f \colon (S^2, A) \rto$ is realized, and
        \item \label{it: two points} if $|A| = 2$, then $f \colon (S^2, A) \rto$ is combinatorially equivalent to either $z \mapsto z^d\colon (\widehat{\C}, \{0, \infty\}) \rto$ or $z \mapsto z^{-d}\colon (\widehat{\C}, \{0, \infty\}) \rto$, where $d := \deg(f)$.
    \end{enumerate}
\end{proposition}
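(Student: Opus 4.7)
The plan is to handle the three assertions in turn, with the bulk of the work in item (3).

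For item (1), it suffices to show $|P_f| \geq 2$. In the finite degree case $d \geq 2$, Riemann--Hurwitz gives $\sum_p (e_p - 1) = 2d - 2$, while a single critical value contributes at most $d - 1 < 2d - 2$ (attained only when it has a unique preimage of local degree $d$), so $|S_f| \geq 2$. In the transcendental case, non-injectivity of $f$ combined with the fact that every connected covering of the simply connected sphere is a homeomorphism first yields $|S_f| \geq 1$. To rule out $|P_f| = 1$, note that the unique singular value $p$ must either be an essential singularity or a fixed point of $f$; in either case the restriction $f \colon S^2 - \overline{f^{-1}(p)} \to S^2 - \{p\}$ is a connected covering of the simply-connected base $S^2 - \{p\}$, hence a global bijection. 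The local normal form $z \mapsto z^{e_q}$ at a preimage $q \in f^{-1}(p)$ in the domain of $f$ then forces $e_q = 1$, but the presence of any such preimage, combined with the bijection on the complement, produces two distinct preimages for points near $p$, contradicting the bijection.

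For item (2), when $|A| = 3$ the Teichm\"uller space $\T_A$ has complex dimension $|A| - 3 = 0$ and is therefore a single point, which is automatically fixed by the holomorphic pullback $\sigma_{f,A}$; Proposition \ref{prop: fixed point of sigma} then yields realization. The remaining case $|A| = 2$ of item (2) follows directly from item (3), since $z \mapsto z^{\pm d}$ are postsingularly finite holomorphic maps.

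For item (3), write $A = \{p_1, p_2\}$. The crucial step is to force $f$ to have finite degree. The restriction $f \colon S^2 - \overline{f^{-1}(A)} \to S^2 - A$ is a covering map onto the annulus $S^2 - A$, whose fundamental group is $\Z$. Its connected covers are either finite-degree annuli or the simply connected universal cover $\R^2$. But the total space is the complement in $S^2$ of an at most countable closed set containing the two distinct points $p_1, p_2$, so a small loop around $p_1$ is non-contractible and the total space cannot be simply connected. Hence the cover has finite degree $d$, making $f$ a degree-$d$ branched self-cover of $S^2$. Then $S^2 - f^{-1}(A)$ is a degree-$d$ annular cover of $S^2 - A$, so $|f^{-1}(A)| = 2 = |A|$, and pseudo-invariance forces $f^{-1}(A) = A$. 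A local-degree count with $\sum_{q \in f^{-1}(p_j)} e_q = d$ rules out collapsing configurations and leaves exactly two combinatorial possibilities: either both $p_i$ are fixed with local degree $d$, or they are interchanged with local degree $d$. The classical topological classification of orientation-preserving branched covers of $S^2$ with prescribed ramification data then yields an orientation-preserving homeomorphism $(S^2, A) \to (\widehat{\C}, \{0, \infty\})$ conjugating $f$ to $z \mapsto z^d$ in the first case and to $z \mapsto z^{-d}$ in the second. Since topological conjugacy is in particular a combinatorial equivalence, the proof is complete.

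The main obstacle I expect is the transcendental subcase of (1): one cannot invoke Riemann--Hurwitz and must combine the triviality of connected covers of simply-connected targets with a careful local analysis near preimages of the singular value (and near essential singularities, using Picard-type behavior) to reach the required contradiction.
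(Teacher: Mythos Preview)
Your approach is sound overall but diverges from the paper's at each step, and two places need tightening.

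\textbf{Differences from the paper.} For $|A|\ge 2$, the paper does not split into finite and transcendental cases; it simply observes that the restriction $f|X-f^{-1}(A)\colon X-f^{-1}(A)\to S^2-A$ is a covering, and if $|A|\le 1$ the base is simply connected, forcing $f$ to be a self-homeomorphism of $S^2$. For $|A|=3$ realized, the paper does not use that $\T_A$ is a point (though your argument is valid and arguably slicker); instead it applies item~(\ref{it: pulling back}) of Definition~\ref{def: top hol} to write $f=\varphi^{-1}\circ g\circ\psi$ with $g$ holomorphic, normalizes so that $\varphi|A=\psi|A$, and then invokes the Alexander-type fact that any two orientation-preserving self-homeomorphisms of $S^2$ agreeing on at most three points are isotopic rel.\ those points. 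For $|A|=2$, the paper uses the same route---pass to a holomorphic $g$ with $S_g=\{0,\infty\}$, read off $g(z)=z^{\pm d}$, then apply the Alexander trick---rather than appealing to a topological classification of branched covers.

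\textbf{Gaps.} In your transcendental subcase of~(1), the sentence ``the presence of any such preimage, combined with the bijection on the complement, produces two distinct preimages'' does not stand on its own: the bijection supplies exactly one preimage, and that is the one near $q$. What you actually need is that $\overline{f^{-1}(p)}$ contains a \emph{second} point (another preimage of $p$, or an essential singularity), and then Picard-type behavior near that point furnishes the extra preimages; you also need to dispose of the case $f^{-1}(p)=\emptyset$, where the bijection on the complement directly contradicts non-injectivity. You allude to Picard in your closing paragraph but do not deploy it in the argument. The paper's uniform covering argument sidesteps this entire case analysis. In your item~(3), the ``classical topological classification of branched covers'' yields an \emph{equivalence of branched covers}---a pair $\phi,\psi$ with $\phi\circ f=z^{\pm d}\circ\psi$---not a single conjugating homeomorphism. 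To reach combinatorial equivalence you still have to arrange $\phi|A=\psi|A$ and then argue that $\phi$ and $\psi$ are isotopic rel.\ $A$; this is exactly the Alexander step the paper uses, and it should be made explicit rather than absorbed into an unproved conjugacy claim.
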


\begin{proof}
    Let $X$ be the complement of the set of essential singularities of $f$. Then the restriction $f | X - f^{-1}(A) \colon X - f^{-1}(A) \to S^2 - A$ is a covering map. If $A = \emptyset$ or $|A| = 1$, this covering map must be a homeomorphism, since $S^2 - A$ is simply connected, and $X - f^{-1}(A)$ would either be $S^2$ or a once-punctured sphere, respectively. In either case, it is clear that $f$ must be a homeomorphism of $S^2$, which is impossible for a Thurston map. Thus, the set $A$ contains at least two points.

    If $|A| = 2$, classical theory of covering map gives two possibilities: either $X - f^{-1}(A)$ is a once-punctured sphere and the covering $f | X - f^{-1}(A) \colon X - f^{-1}(A) \to S^2 - A$ has infinite degree, or $X - f^{-1}(A)$ is a twice-punctured sphere and the corresponding covering has a finite degree. In the first case, $f$ has an essential singularity, implying that $S^2 - X$ consists of a single point and $f^{-1}(A)$ is empty. However, this leads to a contradiction, as there exists a point $a \in A$ that is not an essential singularity of $f$ and, in particular, $f(a) \in A$. 
    
    In the second case, $f$ has a finite degree and two critical values $a, b \in A = S_f = P_f$ with $f^{-1}(\{a, b\}) = \{a, b\}$. By item~\eqref{it: pulling back} of Definition \ref{def: top hol}, there exist two orientation-preserving homeomorphisms $\varphi \colon S^2 \to \widehat{\C}$ and $\psi \colon S^2 \to \widehat{\C}$ such that $f = \varphi \circ g \circ \psi^{-1}$, where $g$ is a rational map. Clearly, $|S_g| = 2$, and therefore, we can post-compose $\varphi$ and $\psi$ with M\"obius transformations, to ensure that $S_g = \{0, \infty\}$, $\varphi(a) = \psi(a) = 0$, and $\varphi(b) = \psi(b) = \infty$. It follows that $g(z) = z^{\pm d}$, where $d = \deg(f)$. According to \cite[Proposition 2.3]{farb_margalit}, $\varphi$ and $\psi$ are isotopic rel.\ $A$ and the rest of item \eqref{it: two points} follows.
    
    Similarly, by \cite[Proposition 2.3]{farb_margalit}, two orientation-preserving homeomorphisms $\varphi \colon S^2 \to \widehat{\C}$ and $\psi \colon S^2 \to \widehat{\C}$ that agree on the set $A \subset S^2$, where $|A| \leq 3$, are isotopic rel.\ $A$. Just as in the case when $|A| = 2$, this observation implies that any Thurston map $f\colon (S^2, A) \rto$ is realized when the marked set $A$ contains three or fewer points, leading to item \eqref{it: three points}.
\end{proof}

\subsection{Teichm\"uller and moduli spaces} \label{subsec: teichmuller and moduli spaces}

Let $A \subset S^2$ be a finite set containing at least three points. Then the \textit{Teichm\"{u}ller space of the sphere $S^2$ with the marked set $A$} is defined as
$$
    \T_A := \{\varphi\colon S^2 \rightarrow \widehat{\C} \text{ is an orientation-preserving homeomorphism}\} / \sim
$$
where $\varphi_1 \sim \varphi_2$ if there exists a M\"obius transformation $M$ such that $\varphi_1$ is isotopic rel.\ $A$ to~$M \circ \varphi_2$. 

Similarly, we define the \textit{moduli space of the sphere $S^2$ with the marked set $A$}:
$$
    \M_A := \{\eta\colon A \rightarrow \widehat{\C} \text{ is injective}\} / \sim,
$$
where $\eta_1 \sim \eta_2$ if there exists a M\"obius transformation $M$ such that $\eta_1 = M \circ \eta_2$. 

Further, $[\cdot]_{A}$ denotes an equivalence class corresponding to a point of either the Teichm\"uller space $\T_A$ or the moduli space $\M_A$. In situations when the considered marked set is obvious, we simply use the notation $[\cdot]$.
Note that there is a natural map $\pi_A \colon \T_A \to \M_A$ defined as $\pi_A([\varphi]) = [\varphi|A]$. According to \cite[Proposition 2.3]{farb_margalit}, when $|A| = 3$, both the Teichm\"uller space $\T_A$ and the moduli space $\M_A$ are just single points. Therefore, for the rest of this section, we assume that $|A| \geq 4$. 

It is known that the Teichm\"uller space $\T_A$ admits a complete metric $d_T$, known as the \textit{Teichm\"uller metric} \cite[Proposition~6.4.4]{Hubbard_Book_1}. Moreover, with respect to the topology induced by this metric, $\T_A$ is a contractible space \cite[Corollary 6.7.2]{Hubbard_Book_1}.  At the same time, both $\T_A$ and $\M_A$ admit structures of $(|A| - 3)$-complex manifolds (see \cite[Theorem 6.5.1]{Hubbard_Book_1}) so that the map $\pi_A \colon \T_A \to \M_A$ becomes a holomorphic universal covering map \cite[Section 10.9]{Hubbard_Book_2}. 

Moreover, the complex structure of $\M_A$ is quite explicit in the general case. Let $A = \{a_1, a_2, \dots, a_k, a_{k + 1}, a_{k + 2}, a_{k + 3}\}$, $k \geq 1$, where the indexing of the points of $A$ is chosen arbitrarily. Define the map $h \colon \M_A \to \C^{k} - \L_k$ by
$$
    h([\varphi]) = (\varphi(a_1), \varphi(a_2), \dots, \varphi(a_k)),
$$
where the representative $\varphi \colon S^2 \to \widehat{\C}$ is chosen so that $\varphi(a_{k + 1}) = 0, \varphi(a_{k + 2}) = 1$, and $\varphi(a_{k + 3}) = \infty$, and where $\L_k$ is the subset of $\C^k$ defined by
$$
    \L_k := \{(z_1, z_2, \dots, z_k) \in \C^k: z_i = z_j \text{ for some $i \neq j$}, \text{ or } z_i = 0, \text{ or } z_i = 1\}.
$$
It is known that the map $h$ provides a biholomorphism between $\M_A$ and $\C^k - \L_k$ (see \cite[Section 10.9]{Hubbard_Book_2}).

The case $|A| = 4$ is particularly simple. In this situation, the Teichm\"uller space~$\T_A$ is biholomorphic to~$\D$, with the metric $d_T$ coinciding with the usual hyperbolic metric on the unit disk (\cite[Corollary 6.10.3 and Theorem 6.10.6]{Hubbard_Book_1}). Additionally, the moduli space~$\M_A$ is biholomorphic to the three-punctured Riemann sphere $\widehat{\C} - \{0, 1, \infty\}$.

Suppose that $A$ and $B$ are finite subsets of $S^2$ such that $B \subset A$. There exist two maps $t_{A, B} \colon \T_A \to \T_B$ and $m_{A, B} \colon \M_A \to \M_B$ that are naturally induced by forgetting the marked points of the set $A - B$. More precisely, $t_{A, B}([\varphi]_{A}) = [\varphi]_{B}$ and $m_{A, B}([\eta]_{A}) = [\eta]_{B}$, where $[\varphi]_A \in \T_A$, $[\varphi]_B \in \T_B$, $[\eta]_A \in \M_A$, and $[\eta]_B \in \M_B$.

Suppose that $A = \{a_1, a_2, \dots, a_k, a_{k + 1}, a_{k + 2}, a_{k + 3}\}$ and $B = \{a_1, a_2, \dots, a_l, a_{k + 1}, a_{k + 2}, a_{k + 3}\}$, where $l \leq k$. Define the maps $h_A \colon \M_A \to \C^k - \L_k$ and $h_B \colon \M_B \to \C^l - \L_l$ as follows:
$$
    h_A([\varphi]_{A}) = (\varphi(a_1), \varphi(a_2), \dots, \varphi(a_k)),\hspace{1.5cm}h_B([\psi]_{B}) = (\psi(a_1), \psi(a_2), \dots, \psi(a_l)).
$$
where the representatives $\varphi \colon S^2 \to \widehat{\C}$ and $\psi \colon S^2 \to \widehat{\C}$ are chosen so that $\varphi(a_{k + 1}) = \psi(a_{k + 1}) =~0, \varphi(a_{k + 2}) = \psi(a_{k + 2}) = 1$, and $\varphi(a_{k + 3}) = \psi(a_{k + 3}) = \infty$.

Then it easy to verify that diagram \eqref{fig: marked points forgetting} commutes, where $\mathrm{proj}_{k, l} \colon \C^k \to \C^l$ is the projection onto the first $l$ coordinates.
\begin{figure}[h]
\begin{tikzcd}
    \T_A \arrow[r, "t_{A, B}"] \arrow[d, "\pi_A"] & \T_B \arrow[d, "\pi_B"] \\
    \M_A \arrow[r, "m_{A, B}"] \arrow[d, "h_A"] & \M_B \arrow[d, "h_B"]\\
    \C^k - \L_k \arrow[r, "\mathrm{proj}_{k, l}"]& \C^l - \L_l
\end{tikzcd}
\caption{Maps induced by forgetting marked points.}\label{fig: marked points forgetting}
\end{figure}

The previous discussion, along with diagram \eqref{fig: marked points forgetting}, indicates that both $t_{A, B}$ and $m_{A, B}$ are holomorphic submersions. In particular, according to the submersion lemma (see, for instance, \cite[Corollary 5.13]{Lee}), if $\tau \in \T_B$, then its fiber $t_{A, B}^{-1}(\tau)$ is a properly embedded complex submanifold of the Teichm\"uller space $\T_A$ and $\dim_{\C}(t_{A, B}^{-1}(\tau)) = |A| - |B|$. Analogously, if $\nu \in \M_B$, then its fiber $m_{A, B}^{-1}(\nu)$ is a properly embedded complex submanifold of $\M_A$, also with complex dimension $|A| - |B|$.

When the sets $A$ and $B$ differ by only one point, a more refined result can be obtained from \textit{Bers Isomorphism}; see \cite{non_isometric_disks}.

\begin{proposition}\label{prop: fibers}
    Suppose that $A$ and $B$ are finite sets such that $B \subset A \subset S^2$, $|B| \geq 3$, and $|A| = |B| + 1$. If $\tau \in \T_B$, then $t_{A, B}^{-1}(\tau)$ is a properly and holomorphically embedded unit disk in the Teichm\"uller space $\T_A$. 
\end{proposition}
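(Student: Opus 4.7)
The proof will combine what the submersion lemma (invoked just before the statement) already gives with the Bers Isomorphism, as the reference to \cite{non_isometric_disks} suggests.

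First I would invoke the submersion lemma. Since $t_{A,B}\colon \T_A\to\T_B$ is a holomorphic submersion between complex manifolds, as the preceding paragraph records (it corresponds to the coordinate projection $\mathrm{proj}_{k,l}$ up to the holomorphic biholomorphisms in diagram \eqref{fig: marked points forgetting}), the fiber $t_{A,B}^{-1}(\tau)$ is already a properly and holomorphically embedded complex submanifold of $\T_A$ of complex dimension $|A|-|B|=1$. Thus the only remaining content of the statement is to identify this one-complex-dimensional submanifold biholomorphically with the unit disk $\D$.

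Next I would invoke the Bers Isomorphism Theorem. Write $A-B=\{a\}$ and let $\varphi\colon S^2\to\widehat{\C}$ be any representative of $\tau\in\T_B$; then the complex structure recorded by $\tau$ turns $S^2-B$ into the punctured Riemann sphere $\widehat{\C}-\varphi(B)$. Passing from $\T_B$ to $\T_A$ corresponds to marking one extra point on this punctured sphere, and Bers' theorem realizes $\T_A$ as a holomorphic fiber space over $\T_B$ via the forgetful map $t_{A,B}$, whose fiber over $\tau$ is, as a complex manifold, naturally biholomorphic to the universal cover of $\widehat{\C}-\varphi(B)$.

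Finally, since $|B|\geq 3$ by hypothesis, the Riemann surface $\widehat{\C}-\varphi(B)$ has negative Euler characteristic and is therefore hyperbolic; by the Uniformization Theorem its universal cover is biholomorphic to $\D$. Combining the three steps yields the desired conclusion: $t_{A,B}^{-1}(\tau)$ is biholomorphic to $\D$ and its embedding in $\T_A$ is proper and holomorphic. The only real subtlety, and thus the main obstacle, is ensuring that one quotes a version of Bers Isomorphism applying to our marked-sphere setting (with an arbitrary finite set $B$ of at least three pre-existing marked points rather than a single distinguished puncture); once the correct form is cited, the rest of the argument is immediate.
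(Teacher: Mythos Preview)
Your proposal is correct and follows exactly the approach the paper indicates: the paper does not give a detailed proof of this proposition but simply attributes it to the Bers Isomorphism with a reference to \cite{non_isometric_disks}, and your three-step outline (submersion lemma for the proper holomorphic embedding, Bers Isomorphism to identify the fiber with the universal cover of the punctured sphere $\widehat{\C}-\varphi(B)$, uniformization to conclude it is $\D$ since $|B|\geq 3$) is precisely how one unpacks that citation.
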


\subsection{Pullback maps}\label{subsec: pullback maps}
In this section we illustrate how the notions introduced in Section~\ref{subsec: teichmuller and moduli spaces} can be applied for studying the properties of Thurston maps. Most importantly, we introduce the following crucial concept.

\begin{proposition}\label{prop: def of sigma map}
    Suppose that $f \colon S^2 \dto S^2$ is finite-type topologically holomorphic map of admissible type, and $A$ and $B$ are two finite subsets of $S^2$ such that $|A|, |B| \geq 3$ and $B \subset \overline{f^{-1}(A)}$. Let $\varphi \colon S^2 \to \widehat{\C}$ be an orientation-preserving homeomorphism. Then there exists an orientation-preserving homeomorphism $\psi\colon S^2 \to \widehat{\C}$ such that $g_\varphi := \varphi \circ f \circ \psi^{-1}\colon \widehat{\C} \dto \widehat{\C}$ is holomorphic. In other words, the following diagram commutes
    $$
    \begin{tikzcd}
        (S^2, B) \arrow[r,"\psi"] \arrow[d,"f", dashed] & (\widehat{\C}, \psi(B)) \arrow[d,"g_\varphi", dashed]\\
     (S^2, A) \arrow[r,"\varphi"] & (\widehat{\C}, \varphi(A))
    \end{tikzcd}
    $$
    The homeomorphism $\psi$ is unique up to post-composition with a M\"obius transformation. Different choices of $\varphi$ that represent the same point in $\T_A$ yield maps $\psi$ that represent the same point in $\T_B$. 
    
    In other words, we have a well-defined map $\sigma_{f, A, B} \colon \T_A \to \T_B$ such that $\sigma_{f, A, B}([\varphi]_A) = [\psi]_B$, called the pullback map (or the $\sigma$-map). As $\varphi$ ranges across all maps representing a single point in $\T_A$, the map $g_\varphi$ is uniquely defined up to pre- and post-composition with M\"obius transformations.
\end{proposition}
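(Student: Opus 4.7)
My plan is to establish the three assertions of the proposition---existence of $\psi$, its uniqueness up to post-composition with a M\"obius transformation, and well-definedness on Teichm\"uller equivalence classes---by combining Proposition~\ref{prop: admissible type} with the isotopy-lifting provided by Corollary~\ref{corr: isotopy lifting property}. For the existence, I would apply Proposition~\ref{prop: admissible type} to obtain an orientation-preserving homeomorphism $\psi_0 \colon X \to \widehat{\C} - E$ on the countably-punctured domain $X$ of $f$ such that $g_\varphi := \varphi \circ f \circ \psi_0^{-1}$ is holomorphic, then use Lemma~\ref{lemm: extensions} to extend $\psi_0$ uniquely to an orientation-preserving self-homeomorphism $\psi \colon S^2 \to \widehat{\C}$ sending $S^2 - X$ onto $E$. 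The extended $g_\varphi \colon \widehat{\C} \dto \widehat{\C}$ is then holomorphic in the sense of Section~\ref{subsec: topologically holomorphic maps}, so the commutative diagram in the statement holds. The uniqueness of $\psi$ up to a M\"obius post-composition is exactly the closing assertion of Proposition~\ref{prop: admissible type}.

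The main work is to show that $\sigma_{f, A, B}$ descends to the Teichm\"uller spaces. Suppose $\varphi_1, \varphi_2$ represent the same class in $\T_A$, so there is a M\"obius transformation $M$ with $\varphi_1$ isotopic rel.\ $A$ to $M \circ \varphi_2$; let $\psi_i$ and $g_i := g_{\varphi_i}$ denote the associated lifts and holomorphic models. Since $M \circ g_2$ is holomorphic and satisfies $(M \circ \varphi_2) \circ f = (M \circ g_2) \circ \psi_2$, I would invoke Corollary~\ref{corr: isotopy lifting property} (enlarging $A$ to include $S_f$ if necessary, which is harmless as $S_f$ is finite) with the target holomorphic model $\widetilde{f} = M \circ g_2$ to produce $\psi_1' \in \Homeo^+(S^2)$ isotopic rel.\ $\overline{f^{-1}(A)}$ to $\psi_2$ and satisfying $\varphi_1 \circ f = (M \circ g_2) \circ \psi_1'$. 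Comparing with $\varphi_1 \circ f = g_1 \circ \psi_1$ and applying the uniqueness clause of Proposition~\ref{prop: admissible type} to $\varphi_1$, one obtains a M\"obius transformation $N$ with $\psi_1' = N^{-1} \circ \psi_1$ and $M \circ g_2 = g_1 \circ N$. Hence $\psi_1$ is isotopic rel.\ $\overline{f^{-1}(A)}$, and in particular rel.\ $B$ by the hypothesis $B \subset \overline{f^{-1}(A)}$, to $N \circ \psi_2$, which says precisely that $[\psi_1]_B = [\psi_2]_B$ in $\T_B$. The final claim of the proposition, that $g_\varphi$ is well defined up to pre- and post-composition with M\"obius transformations, is then exactly the identity $M \circ g_2 = g_1 \circ N$ extracted in this step.

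The hard part will be the reconciliation described above: Proposition~\ref{prop: admissible type} pins $\psi$ down only up to a M\"obius acting on a \emph{fixed} choice of holomorphic model $g_\varphi$, while Corollary~\ref{corr: isotopy lifting property} produces a lift isotopic to a \emph{prescribed} homeomorphism but against a specific choice of target model. Bridging these two viewpoints requires carefully extracting the mediating M\"obius transformation $N$ and then tracking that the resulting isotopy rel.\ $\overline{f^{-1}(A)}$ remains an isotopy rel.\ $B$ after the post-composition by $N$, which is where the hypothesis $B \subset \overline{f^{-1}(A)}$ plays its essential role.
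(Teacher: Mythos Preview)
Your approach is essentially identical to the paper's: both obtain existence and uniqueness of $\psi$ from Proposition~\ref{prop: admissible type} combined with Lemma~\ref{lemm: extensions}, and both invoke Corollary~\ref{corr: isotopy lifting property} to show that $[\psi]_B$ depends only on $[\varphi]_A$, using $B \subset \overline{f^{-1}(A)}$. The paper's proof is terser---it simply notes that an isotopy of $\varphi$ rel.\ $A$ induces an isotopy of $\psi$ rel.\ $\overline{f^{-1}(A)} \supset B$, while a M\"obius post-composition of $\varphi$ leaves $\psi$ unchanged---whereas you explicitly extract the mediating M\"obius transformation $N$ and the relation $M \circ g_2 = g_1 \circ N$.

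One small caution: your parenthetical ``enlarging $A$ to include $S_f$ if necessary'' does not actually work as stated, since enlarging $A$ \emph{strengthens} the hypothesis you would need on the isotopy (you only have an isotopy rel.\ the original $A$, not rel.\ $A \cup S_f$). In practice the proposition is only ever applied with $S_f \subset A$, and the paper's proof tacitly assumes this as well, so this is a gap in the statement rather than in your argument.
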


\begin{proof}
    The existence of a homeomorphism $\psi \colon S^2 \to \widehat{\C}$, as well as its uniqueness up to pre-composition by a M\"obius transformation, is guaranteed by Lemma \ref{lemm: extensions} and Proposition \ref{prop: admissible type}. If we modify $\varphi$ by an isotopy rel.\ $A$, then $\psi$ changes only by an isotopy rel.\ $\overline{f^{-1}(A)} \supset B$ according to Corollary~\ref{corr: isotopy lifting property}. Post-composition of $\varphi$ by a M\"obius transformation does not affect the homeomorphism $\psi$. Thus, varying $\varphi$ within its equivalence class in $\T_A$ does not affect $[\psi]_{B}$, showing that the pullback map $\sigma_{f, A, B}$ introduced above is well-defined. These arguments also show that $g_{\varphi}$ is uniquely determined up to pre- and post-composition with a M\"obius transformation.
\end{proof}

\begin{remark}
    In the more classical convention, the roles of the sets $A$ and $B$ are usually reversed, meaning the domain of the pullback map is the Teichm\"uller space~$\mathcal{T}_A$, while the range is the Teichm\"uller space~$\mathcal{T}_B$; see~\cite{Pullback_invariants} and~\cite[Definition 3.1]{Astorg}. However, in this paper, we adopt the opposite convention.

    Moreover, except for this section (i.e., Section~\ref{subsec: pullback maps}), the sets $B$ and $C$ are always considered subsets of $A$, unless stated otherwise.
\end{remark}

Analogously to \cite[Proposition 2.16]{small_postsingular_set}, we can easily derive the following property related to the behavior of pullback maps.

\begin{proposition}\label{prop: dependence}
    Suppose that we are in the setting of Proposition \ref{prop: def of sigma map}, and there exist subsets $C \subset A$ and $D \subset B$ with $S_f \subset C$, $D \subset \overline{f^{-1}(C)}$, and $|C|, |D| \geq 3$. Then, the map~$g_\varphi$ depends only on the isotopy class rel.\ $C$ of $\varphi$ and $\psi|D$, and the equivalence class $[\psi]_{D}$ depends only on $[\varphi]_{C}$, i.e., the following diagram commutes:
    $$
    \begin{tikzcd}
        \T_A \arrow[rr,"\sigma_{f, A, B}"] \arrow[dd,"t_{A, C}"] & & \T_B \arrow[dd,"t_{B, D}"]\\
        & & \\
     \T_C \arrow[rr,"\sigma_{f, C, D}"] & & \T_D
    \end{tikzcd}
    $$
\end{proposition}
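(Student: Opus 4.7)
The plan is to trace a point $[\varphi]_A \in \T_A$ around both legs of the diagram and show that the outputs in $\T_D$ coincide, by exploiting the uniqueness statement of Proposition \ref{prop: def of sigma map} together with the isotopy lifting provided by Corollary \ref{corr: isotopy lifting property}.

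First I would pick a representative $\varphi\colon S^2 \to \widehat\C$ of $[\varphi]_A$ and invoke Proposition \ref{prop: def of sigma map} for the pair $(A,B)$ to obtain an orientation-preserving homeomorphism $\psi\colon S^2 \to \widehat\C$ such that $g_\varphi := \varphi \circ f \circ \psi^{-1}$ is holomorphic; by definition, $\sigma_{f,A,B}([\varphi]_A) = [\psi]_B$, and $t_{B,D}([\psi]_B) = [\psi]_D$. For the other leg, $t_{A,C}([\varphi]_A) = [\varphi]_C$, and I would apply Proposition \ref{prop: def of sigma map} once more, this time to the pair $(C,D)$, using the hypotheses $S_f \subset C$ and $D \subset \overline{f^{-1}(C)}$, to obtain $\widetilde\psi$ with $\varphi \circ f \circ \widetilde\psi^{-1}$ holomorphic and $\sigma_{f,C,D}([\varphi]_C) = [\widetilde\psi]_D$. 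The homeomorphism $\psi$ already produced for the $(A,B)$-construction is a valid choice for the $(C,D)$-construction, since the holomorphicity of $g_\varphi$ is intrinsic to $\varphi$ and does not depend on which admissible marked sets we are tracking. Hence by the uniqueness clause of Proposition \ref{prop: def of sigma map}, $\psi$ and $\widetilde\psi$ differ only by post-composition with a Möbius transformation, and therefore $[\psi]_D = [\widetilde\psi]_D$ in $\T_D$.

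To justify that this conclusion descends to equivalence classes, I would verify that the construction is invariant under the allowed modifications of $\varphi$ when viewed only through the $(C,D)$-data. If $\varphi$ is replaced by $M \circ \varphi$ for a Möbius transformation $M$, the same $\psi$ still works for $(A,B)$, so $[\psi]_B$, and hence $[\psi]_D$, is unchanged. If $\varphi$ is replaced by a homeomorphism $\varphi'$ isotopic to $\varphi$ relative to $C$ (not merely relative to $A$), Corollary \ref{corr: isotopy lifting property} applied with $S_f \subset C$ yields a corresponding $\psi'$ with $\varphi' \circ f \circ (\psi')^{-1}$ holomorphic and with $\psi'$ isotopic to $\psi$ relative to $\overline{f^{-1}(C)} \supset D$; in particular $[\psi']_D = [\psi]_D$. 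This simultaneously gives the second assertion of the proposition (that $[\psi]_D$ depends only on $[\varphi]_C$) and confirms the well-definedness of the map $[\varphi]_C \mapsto [\psi]_D$, which by construction is precisely $\sigma_{f,C,D} \circ t_{A,C} = t_{B,D} \circ \sigma_{f,A,B}$.

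The first assertion, that $g_\varphi$ depends only on the isotopy class rel.\ $C$ of $\varphi$ (and on $\psi|D$), follows from the same argument: any replacement of $\varphi$ by an isotopic (rel.\ $C$) homeomorphism produces, via Corollary \ref{corr: isotopy lifting property}, a new $\psi$ that agrees with the old one up to isotopy rel.\ $\overline{f^{-1}(C)}$, and the resulting holomorphic map differs from $g_\varphi$ only by pre- and post-composition with Möbius transformations, as in Proposition \ref{prop: def of sigma map}. I do not anticipate a genuine obstacle here; the only point requiring care is making sure to invoke Corollary \ref{corr: isotopy lifting property} with the smaller set $C$ (rather than $A$) so that the lifted isotopy is only guaranteed to fix $\overline{f^{-1}(C)} \supset D$, which is exactly what is needed to conclude $[\psi]_D$ is well defined but which does \emph{not} in general give $[\psi]_B$ well defined from $[\varphi]_C$ alone.
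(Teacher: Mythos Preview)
Your argument is correct and is essentially the intended proof: the paper omits the details here, simply citing \cite[Proposition 2.16]{small_postsingular_set} and noting that the result follows analogously. Your verification via the uniqueness clause of Proposition \ref{prop: def of sigma map} together with Corollary \ref{corr: isotopy lifting property} applied relative to the smaller set $C$ is exactly the right mechanism, and your closing observation about why $[\psi]_B$ need not be determined by $[\varphi]_C$ alone shows you have understood the content of the statement.
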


Using Proposition \ref{prop: composition}, it is straightforward to verify that the following composition rule holds in the context of pullback maps.

\begin{proposition}\label{prop: functoriality}
    Let $f_1 \colon S^2 \dto S^2$ and $f_2 \colon S^2 \dto S^2$ be two finite-type topologically holomorphic maps of admissible type. Suppose that $A$, $B$, and $C$ are finite subsets of $S^2$, each containing at least three points, and satisfying $S_{f_1} \subset A$, $S_{f_2} \subset B$, $B \subset \overline{f_1^{-1}(A)}$, and $C \subset \overline{f_2^{-1}(B)}$. Then $\sigma_{f_1 \circ f_2, C, A} = \sigma_{f_2, B, C} \circ \sigma_{f_1, A, B}$.
\end{proposition}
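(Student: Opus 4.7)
The plan is to chase a representative $\varphi \colon S^2 \to \widehat{\C}$ of a point $[\varphi]_A \in \T_A$ through both sides of the claimed identity and to compare the resulting equivalence classes in $\T_C$.

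First, I would verify that $\sigma_{f_1 \circ f_2, C, A}$ is itself well-defined, i.e.\ that the hypotheses of Proposition~\ref{prop: def of sigma map} hold for $f_1 \circ f_2$. Admissibility of $f_1 \circ f_2$ is immediate from Proposition~\ref{prop: composition}. The remaining requirement $C \subset \overline{(f_1 \circ f_2)^{-1}(A)}$ is the main bookkeeping step: given $c \in C \subset \overline{f_2^{-1}(B)}$, I would distinguish whether $c$ lies in $f_2^{-1}(B)$ itself (with further subcases according to whether $f_2(c)$ lies in $f_1^{-1}(A)$ or is an essential singularity of $f_1$) or $c$ is an essential singularity of $f_2$, and in each case use the containment $B \subset \overline{f_1^{-1}(A)}$ to produce either a direct preimage of $A$ under $f_1 \circ f_2$ at $c$, or a sequence of such preimages accumulating at $c$. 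I expect this purely topological case analysis, rather than the algebraic identity, to be the main obstacle.

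For the main identity, I would apply Proposition~\ref{prop: def of sigma map} twice. Choose any representative $\varphi$ of $[\varphi]_A$; this proposition furnishes an orientation-preserving homeomorphism $\psi_1 \colon S^2 \to \widehat{\C}$ for which $g_1 := \varphi \circ f_1 \circ \psi_1^{-1} \colon \widehat{\C} \dto \widehat{\C}$ is holomorphic and $[\psi_1]_B = \sigma_{f_1, A, B}([\varphi]_A)$. Applying the proposition again with $\psi_1$ in place of $\varphi$ and $f_2$ in place of $f_1$, one obtains a homeomorphism $\psi_2 \colon S^2 \to \widehat{\C}$ such that $g_2 := \psi_1 \circ f_2 \circ \psi_2^{-1}$ is holomorphic and $[\psi_2]_C = \sigma_{f_2, B, C}([\psi_1]_B)$.

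The key computation is then a one-liner:
$$
\varphi \circ (f_1 \circ f_2) \circ \psi_2^{-1} = \bigl(\varphi \circ f_1 \circ \psi_1^{-1}\bigr) \circ \bigl(\psi_1 \circ f_2 \circ \psi_2^{-1}\bigr) = g_1 \circ g_2,
$$
which is holomorphic on its natural domain as a composition of holomorphic maps. Hence $\psi_2$ is a valid witness for $\sigma_{f_1 \circ f_2, C, A}([\varphi]_A)$, and the uniqueness statement in Proposition~\ref{prop: def of sigma map} gives $[\psi_2]_C = \sigma_{f_1 \circ f_2, C, A}([\varphi]_A)$. Combining this with the definitions of $\psi_1$ and $\psi_2$ yields $\sigma_{f_1 \circ f_2, C, A}([\varphi]_A) = (\sigma_{f_2, B, C} \circ \sigma_{f_1, A, B})([\varphi]_A)$, as claimed.
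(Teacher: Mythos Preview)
Your proposal is correct and matches the paper's approach: the paper simply states that the result ``is straightforward to verify'' using Proposition~\ref{prop: composition}, and what you have written is precisely that verification, carried out in detail. Your identification of the containment $C \subset \overline{(f_1 \circ f_2)^{-1}(A)}$ as the main bookkeeping step and the one-line telescoping computation as the core of the argument is exactly right.
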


In this paper, the primary focus is on the case when $A = B$. In this scenario, either $f \colon (S^2, A) \rto$ is a Thurston map or $f$ is an orientation-preserving homeomorphism such that $f(A) = A$. We will use the notation $\sigma_{f, A}$ for the pullback map $\sigma_{f, A, A}$. Note that the domain and co-domain of $\sigma_{f, A}$ are the same, allowing us to explore its dynamical properties. 

The following observation provides the most crucial property of pullback maps.

\begin{proposition}\label{prop: fixed point of sigma}
    A Thurston map $f\colon (S^2, A) \righttoleftarrow$ with $|A| \geq 3$ is realized if and only if the pullback map~$\sigma_{f, A}$ has a fixed point in the Teichm\"uller space $\T_A$.
\end{proposition}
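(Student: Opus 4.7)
Both implications are obtained by unpacking the construction of the pullback map in Proposition~\ref{prop: def of sigma map} together with the formulation of combinatorial equivalence in Remark~\ref{rem: comb equiv}. The hypothesis $|A|\geq 3$ guarantees $\T_A$ is defined, and the $f$-pseudo-invariance of $A\supset S_f$ gives $A\subset \overline{f^{-1}(A)}$, so that Proposition~\ref{prop: def of sigma map} applies with $B=A$. The only real issue is to track the M\"obius ambiguity inherent in the Teichm\"uller-space equivalence relation.

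For the ``if'' direction, I would start with a fixed point $\tau\in\T_A$ of $\sigma_{f,A}$ and a representative $\varphi\colon S^2\to\widehat{\C}$. Applying Proposition~\ref{prop: def of sigma map} produces an orientation-preserving homeomorphism $\psi\colon S^2\to\widehat{\C}$ and a holomorphic map $g\colon\widehat{\C}\dto\widehat{\C}$ with $\varphi\circ f = g\circ\psi$ and $\sigma_{f,A}(\tau)=[\psi]_A$. The equation $[\psi]_A=[\varphi]_A$ in $\T_A$ furnishes a M\"obius transformation $M$ such that $\psi$ is isotopic rel.\ $A$ to $M\circ\varphi$. The key maneuver is then to replace the pair $(g,\psi)$ by $(g',\psi'):=(g\circ M,\,M^{-1}\circ\psi)$: the identity $\varphi\circ f=g'\circ\psi'$ still holds, $g'$ is still holomorphic (pre-composition by a M\"obius transformation preserves holomorphicity), and now $\psi'$ is isotopic rel.\ $A$ to $\varphi$ as maps $S^2\to\widehat{\C}$. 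In particular $\psi'(A)=\varphi(A)=:P$, so by Remark~\ref{rem: comb equiv} (with $\phi_1=\varphi$, $\phi_2=\psi'$, $f_2=g'$) the Thurston map $f\colon(S^2,A)\rto$ is combinatorially equivalent to $g'\colon(\widehat{\C},P)\rto$. Since $S_{g'}=\varphi(S_f)\subset\varphi(A)=P$ and postsingular orbits are transported through the conjugation, $g'$ is a psf holomorphic map, so $f$ is realized.

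Conversely, if $f$ is realized by a psf holomorphic map $g\colon(\widehat{\C},P)\rto$, then by Remark~\ref{rem: comb equiv} there exist orientation-preserving homeomorphisms $\phi_1,\phi_2\colon S^2\to\widehat{\C}$ with $\phi_1(A)=\phi_2(A)=P$, $\phi_1$ isotopic rel.\ $A$ to $\phi_2$, and $\phi_1\circ f=g\circ\phi_2$. Setting $\tau:=[\phi_1]_A=[\phi_2]_A\in\T_A$ and invoking Proposition~\ref{prop: def of sigma map} with $\varphi=\phi_1$, I obtain a representative $\psi$ of $\sigma_{f,A}(\tau)$. The uniqueness clause of Proposition~\ref{prop: def of sigma map} says $\psi$ agrees with $\phi_2$ up to post-composition with a M\"obius transformation, and since such a post-composition leaves the class in $\T_A$ unchanged, $[\psi]_A=[\phi_2]_A=\tau$. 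Hence $\sigma_{f,A}(\tau)=\tau$.

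The only potential obstacle is purely bookkeeping: one must verify that the rewriting $(g,\psi)\mapsto(g\circ M,M^{-1}\circ\psi)$ preserves the hypotheses of Proposition~\ref{prop: def of sigma map} and that the resulting $g'$ remains a psf holomorphic map with postsingular set contained in $\varphi(A)$. Once this is checked, the equivalence of the two conditions is essentially a direct translation between the definitions.
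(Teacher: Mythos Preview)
Your proposal is correct and follows essentially the same approach as the paper's proof: both directions unwind Proposition~\ref{prop: def of sigma map} and Remark~\ref{rem: comb equiv}, absorbing the M\"obius ambiguity into the holomorphic map. The only cosmetic difference is that the paper writes the adjusted holomorphic map as $g_\varphi\circ M^{-1}$ (with $\varphi$ isotopic rel.\ $A$ to $M\circ\psi$), whereas you write it as $g\circ M$ and adjust $\psi$ to $M^{-1}\circ\psi$; these are the same maneuver with the roles of $M$ and $M^{-1}$ swapped.
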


\begin{proof}
    Suppose that $f \colon (S^2, A) \rto$ is realized by a postsingularly finite holomorphic map $g\colon (\widehat{\C}, P) \rto$. Then, as noted in Remark~\ref{rem: comb equiv}, there exist orientation-preserving homeomorphisms $\varphi, \psi \colon S^2 \to \widehat{\C}$ such that $\varphi(A) = \psi(A) = P$, $\varphi$ and $\psi$ are isotopic rel.\ $A$, and $\varphi \circ f = g \circ \psi$. Clearly, $\tau = [\varphi] = [\psi] \in \T_A$ is a fixed point of $\sigma_{f, A}$.

    Now, suppose that $\tau = [\varphi] \in \T_A$ is a fixed point of $\sigma_{f, A}$. By Proposition \ref{prop: def of sigma map}, there exists an orientation-preserving homeomorphism $\psi \colon S^2 \to \widehat{\C}$ such that $g_\varphi := \varphi \circ f \circ \psi^{-1}\colon \widehat{\C} \dto \widehat{\C}$ is holomorphic and $[\varphi] = [\psi]$. Therefore, there exists a M\"obius transformation $M$ such that $\varphi$ and $M \circ \psi$ are isotopic rel.\ $A$ and, in particular, $\varphi|A = M \circ \psi|A$. Hence, the holomorphic map $g := g_{\varphi} \circ M^{-1}\colon (\widehat{\C}, \varphi(A)) \righttoleftarrow$ is postsingularly finite and combinatorially equivalent to~$f \colon (S^2, A) \rto$.  
\end{proof}

We say that a Thurston map $f\colon (S^2, A) \rto$ is a \textit{$(2, 2, 2, 2)$-map} if $f$ has a finite degree, exactly four postcritical values, and each of its critical points is \textit{simple} (i.e., has local degree~2) and strictly pre-periodic. The following proposition summarizes the most important properties of pullback maps.

\begin{proposition}\label{prop: propeties of pullback map}
    Let $f \colon (S^2, A) \rto$ be a Thurston map with $|A| \geq 3$. Then 
    \begin{enumerate}
        \item \label{it: hol} $\sigma_{f, A}$ holomorphic;
        \item \label{it: general} $\sigma_{f, A}$ is \textit{1-Lipschitz}, i.e., $d_T(\sigma_{f, A}(\tau_1), \sigma_{f, A}(\tau_2)) \leq d_T(\tau_1, \tau_2)$ for every $\tau_1, \tau_2 \in \T_A$; 
        \item \label{it: transc case} $\sigma_{f, A}$ is \textit{distance-decreasing}, i.e., $d_T(\sigma_{f, A}(\tau_1), \sigma_{f, A}(\tau_2)) < d_T(\tau_1, \tau_2)$ for every distinct $\tau_1, \tau_2 \in \T_A$, if $f$ is transcendental;
        \item \label{it: not 2,2,2,2 case} $\sigma_{f, A}^{\circ k}$ is distance-decreasing for some $k \geq 1$ if $f$ is not a $(2, 2, 2, 2)$-map;
        \item \label{it: 2,2,2,2 case} $\sigma_{f, P_f}$ is an isometry of $\T_{P_f}$ if $f$ is a $(2, 2, 2, 2)$-map.
    \end{enumerate}
\end{proposition}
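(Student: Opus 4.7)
The plan for items~(1) and~(2) is to work in Beltrami-coefficient coordinates and invoke Royden's theorem. Near a point $[\varphi] \in \T_A$, the Teichm\"uller space is parametrized by small Beltrami forms $\mu$ on $\widehat{\C}$ supported off $\varphi(A)$, and by Proposition~\ref{prop: def of sigma map} the map $\sigma_{f,A}$ sends $[\varphi_\mu]$ to the class of a normalized solution to the Beltrami equation with coefficient $f^{*}\mu$. Since $\mu \mapsto f^{*}\mu$ is linear with $\|f^{*}\mu\|_{\infty} = \|\mu\|_{\infty}$ (as $f$ is locally a homeomorphism off its discrete critical set) and solutions to the Beltrami equation depend holomorphically on the parameter, $\sigma_{f,A}$ is holomorphic, which is item~(1). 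Royden's identification of $d_T$ with the Kobayashi metric on $\T_A$ then yields the $1$-Lipschitz property (item~(2)), since holomorphic maps between complex manifolds are Kobayashi-non-expanding.

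For items~(4) and~(5), the idea is to factor $\sigma_{f, A}$ through a larger Teichm\"uller space. Set $A' := \overline{f^{-1}(A)}$; because $A$ is $f$-pseudo-invariant and $S_f \subset A$, we have $A \subset A'$. When $f$ has finite degree so that $A'$ is finite, Proposition~\ref{prop: dependence} gives
\[
\sigma_{f, A, A} \;=\; t_{A', A} \circ \sigma_{f, A, A'},
\]
where $\sigma_{f, A, A'} \colon \T_A \to \T_{A'}$ is an \emph{isometric} embedding: the restriction $f \colon S^2 - A' \to S^2 - A$ is an unramified finite covering, and both the $1$-Lipschitz inequality and its reverse follow from Teichm\"uller's uniqueness theorem (extremal quasiconformal maps upstairs are exactly the lifts of extremal maps downstairs). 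The contraction of $\sigma_{f, A}$ and its iterates is therefore governed entirely by the holomorphic forgetful submersion $t_{A', A}$. For~(5), a $(2, 2, 2, 2)$-map with $A = P_f$ satisfies $f^{-1}(P_f) = P_f$ by orbifold compatibility (cf.~\cite[Appendix~C]{Hubbard_Book_2}), so $A' = A$, $t_{A', A} = \id$, and $\sigma_{f, P_f}$ is itself the isometric embedding, hence an isometry. For~(4), Riemann--Hurwitz combined with the negation of the $(2, 2, 2, 2)$ hypothesis forces $|A'| > |A|$, the fibers of $t_{A', A}$ are positive-dimensional, and a Schwarz--Pick argument combined with the classification of isometric holomorphic self-maps of finite-dimensional Teichm\"uller spaces produces some $k \geq 1$ for which $\sigma_{f, A}^{\circ k}$ is strictly distance-decreasing.

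Item~(3) requires a separate argument since the transcendental case lacks a factorization through a finite-dimensional $\T_{A'}$. Here I argue directly: for distinct $\tau_1, \tau_2 \in \T_A$, the Teichm\"uller map realizing $d_T(\tau_1, \tau_2)$ is determined by a holomorphic quadratic differential $q$ on the representative Riemann sphere, and its lift via the associated covering of $f$ has the same dilatation, giving $d_T(\sigma(\tau_1), \sigma(\tau_2)) \leq d_T(\tau_1, \tau_2)$. Equality would, by Teichm\"uller's uniqueness theorem, force the pullback $f^{*}q$ to be a genuine Teichm\"uller quadratic differential on the representative sphere of $\sigma(\tau_1)$; but $f^{*}q$ has non-integrable singularities at each of the infinitely many points of $\overline{f^{-1}(A)} \setminus A$, so this is impossible and strict decrease follows.

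The main obstacle will be item~(4): while strict non-expansion of \emph{some} iterate is suggested by the inequality $|A'| > |A|$, one must rule out isometric behavior for \emph{every} iterate, which requires tracking how extremal quadratic differentials transform under repeated pullback and showing that no fully invariant Teichm\"uller structure can arise outside the $(2, 2, 2, 2)$ configuration. This is the classical Thurston contraction argument whose formal treatment is carried out in~\cite[Section~10.9 and Appendix~C]{Hubbard_Book_2}, and it forms the technical heart of the proposition.
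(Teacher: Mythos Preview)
Your overall strategy is sound and close in spirit to the paper's (which factors $f$ through a holomorphic model and then cites \cite{Buff}, \cite{Hubbard_Book_2}, \cite{Astorg}, and \cite{Adam_Thesis} for the individual items). The factorization $\sigma_{f,A} = t_{A',A} \circ \sigma_{f,A,A'}$ with $\sigma_{f,A,A'}$ an isometric embedding is a clean way to organize items~(2) and~(4). However, two of your arguments contain genuine errors.

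\textbf{Item~(5) is wrong as stated.} For a $(2,2,2,2)$-map one does \emph{not} have $f^{-1}(P_f)=P_f$. Take any Latt\`es map induced by multiplication-by-$2$ on an elliptic curve: then $|f^{-1}(P_f)|=10$ while $|P_f|=4$. More generally, the $2d-2$ simple critical points lie in $f^{-1}(P_f)$ but, being strictly pre-periodic, need not lie in $P_f$. Your factorization therefore has $|A'|>|A|$ even in the $(2,2,2,2)$ case, and the forgetful map $t_{A',A}$ is genuinely a submersion with positive-dimensional fibers. What makes $\sigma_{f,P_f}$ an isometry is not that the factorization collapses, but that the image $\sigma_{f,P_f,A'}(\T_{P_f})\subset\T_{A'}$ is precisely a locus on which $t_{A',P_f}$ restricts to an isometry; equivalently, the parabolic orbifold $(S^2,(2,2,2,2))$ is double-covered by a torus on which $f$ lifts to an unbranched affine self-cover, and the induced map on the torus Teichm\"uller space is an isometry. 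This is what \cite[Section~9]{DH_Th_char} and \cite[Appendix~C.6]{Hubbard_Book_2} establish, and it cannot be shortcut by the claim $A'=A$.

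\textbf{Item~(3) has the right conclusion but a wrong reason.} At each regular preimage of a pole of $q$, the pullback $f^*q$ acquires a \emph{simple} pole, and simple poles of quadratic differentials are locally integrable. So ``non-integrable singularities at each of the infinitely many points'' is false. The correct obstruction is that equality would force $f^*q$ to coincide, up to a positive scalar, with an element of $Q(\widehat{\C},\psi(A))$, i.e., a meromorphic quadratic differential holomorphic off $\psi(A)$; but $f^*q$ has (simple) poles at the infinitely many regular preimages of poles of $q$ lying outside $\psi(A)$, hence cannot lie in that finite-dimensional space. Alternatively, $\|f^*q\|_1=\deg(f)\cdot\|q\|_1=\infty$ globally. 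Either formulation repairs the argument; the paper itself simply cites \cite[Lemma~16]{Adam_Thesis} and \cite[Sections~2--3]{Astorg} for this step.
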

\begin{proof}
    This result is rather well-known in the context of finite degree Thurston maps (see \cite[Section 1.3]{Buff} and \cite[Sections 10.6 and 10.7, and Appendix C.6]{Hubbard_Book_2}), and it can be extended analogously to our setting. We provide only an outline of the argument here, leaving some of the details to the reader.
    
    According to Lemma \ref{lemm: extensions} and Proposition \ref{prop: admissible type}, there exist two orientation preserving homeomorphisms $\varphi, \psi \colon S^2 \to \widehat{\C}$ and a holomorphic map $g \colon \widehat{\C} \dto \widehat{\C}$ so that $f = \varphi^{-1} \circ g \circ \psi$. Let $B := \varphi(A)$ and $C := \psi(A)$. In this proof, we identify the topological sphere $S^2$ with the Riemann sphere~$\widehat{\C}$. By Proposition \ref{prop: functoriality}, we have the following:
    $$
        \sigma_{f, A} = \sigma_{\psi, C, A} \circ \sigma_{g, B, C} \circ \sigma_{\varphi^{-1}, A, B}.
    $$
    It is well-known that $\sigma_{\psi, C, A}\colon \T_C \to \T_A$ and $\sigma_{\varphi^{-1}, A, B} \colon \T_A \to \T_B$ are holomorphic isometries (see \cite[Sections 6.4 and 7.4]{Hubbard_Book_1}). Using an approach analogous to \cite[Lemma 3.3]{Astorg}, one can show that $\sigma_{g, B, C} \colon \T_B \to \T_C$ is also holomorphic, which establishes item \eqref{it: hol}. Similarly, according to \cite[Lemma 16]{Adam_Thesis} or \cite[Sections 2 and 3]{Astorg}, the pullback map $\sigma_{g, B, C}$ must be distance-decreasing if the map $f$ is transcendental, leading to item \eqref{it: transc case}. Item \eqref{it: general} follows from \cite[Corollary 6.10.7]{Hubbard_Book_1}, and item \eqref{it: not 2,2,2,2 case} comes from the previous discussion and \cite[Lemma 2.9]{Buff} (see also \cite[Corollary 10.7.8]{Hubbard_Book_2}). Lastly, item \eqref{it: 2,2,2,2 case} is well-known; see \cite[Section 9]{DH_Th_char} or \cite[Appendix C.6]{Hubbard_Book_2}.
\end{proof}

\begin{remark}\label{rem: remark on 2,2,2,2-maps}
    Suppose that $f$ is a $(2, 2, 2, 2)$-map. According to item \eqref{it: 2,2,2,2 case}, the pullback map $\sigma_{f, P_f}$ is an automorphism of the Teichm\"uller space $\T_{P_f}$, which is biholomorphic to the unit disk, as discussed in Section \ref{subsec: teichmuller and moduli spaces}. Moreover, it is known that there exist examples of Thurston maps $f$ for which $\sigma_{f, P_f}$ is the identity, or it is conjugate to an elliptic, parabolic, or hyperbolic transformation of the unit disk; see \cite[Appendix C.6]{Hubbard_Book_2}.
\end{remark}

Proposition \ref{prop: propeties of pullback map}, called \textit{Thurston's rigidity} (cf. \cite[Corollary 10.7.8]{Hubbard_Book_2}), illustrates how the properties of a pullback map can be used to derive conclusions about dynamics of the corresponding Thurston map.

\begin{proposition}\label{prop: rigidity}
    Suppose that $f\colon (S^2, A) \rto$ is not a $(2, 2, 2, 2)$-map. If $f\colon (S^2, A) \rto$ is realized by two postsingularly finite holomorphic maps $g_1 \colon (\widehat{\C}, P_1) \rto$ and $g_2 \colon (\widehat{\C}, P_2) \rto$, then there exists a M\"obius transformation $M$ such that $M \circ g_1 = g_2 \circ M$ and $M(P_1) = P_2$.
\end{proposition}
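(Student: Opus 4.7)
The plan is to invoke Thurston's rigidity, Proposition~\ref{prop: propeties of pullback map}(\ref{it: not 2,2,2,2 case}), to force the two realizations to produce the same fixed point of $\sigma_{f,A}$, and then translate this equality in Teichmüller space into the desired Möbius conjugacy. By Remark~\ref{rem: comb equiv}, each realization yields homeomorphisms $\varphi_i, \psi_i \in \Homeo^+(S^2)$ with $\varphi_i(A) = \psi_i(A) = P_i$, $\varphi_i$ isotopic rel.\ $A$ to $\psi_i$, and $\varphi_i \circ f = g_i \circ \psi_i$. Setting $\tau_i := [\varphi_i]_A = [\psi_i]_A \in \T_A$ produces a fixed point of $\sigma_{f, A}$, as in the proof of Proposition~\ref{prop: fixed point of sigma}. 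Since $f$ is not a $(2,2,2,2)$-map, some iterate $\sigma_{f,A}^{\circ k}$ is distance-decreasing and hence admits at most one fixed point; because every fixed point of $\sigma_{f,A}$ is also a fixed point of $\sigma_{f,A}^{\circ k}$, this forces $\tau_1 = \tau_2$.

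The equality $[\varphi_1]_A = [\varphi_2]_A$ supplies a Möbius transformation $N$ with $\varphi_1$ isotopic rel.\ $A$ to $N \circ \varphi_2$, whence $M := N^{-1}$ satisfies $M(P_1) = P_2$. The conjugate $\widetilde g := N \circ g_2 \circ N^{-1}$ also realizes $f$, via the pair $(N \circ \varphi_2, N \circ \psi_2)$, and has postsingular set $P_1$. Next I would apply Corollary~\ref{corr: isotopy lifting property} to $\varphi_1 \circ f = g_1 \circ \psi_1$ and the isotopy $\varphi_1 \sim N \circ \varphi_2$ rel.\ $A$ (using $S_f \subset A$) to obtain $\psi_1' \in \Homeo^+(S^2)$ isotopic to $\psi_1$ rel.\ $\overline{f^{-1}(A)}$ with $N \circ \varphi_2 \circ f = g_1 \circ \psi_1'$. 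Combined with $N \circ \varphi_2 \circ f = \widetilde g \circ (N \circ \psi_2)$, this yields
$$
    g_1 = \widetilde g \circ \xi, \qquad \xi := (N \circ \psi_2) \circ (\psi_1')^{-1}.
$$
Chaining the available isotopies (all rel.\ $A$) shows that $\xi \in \Homeo^+(\widehat{\C})$ is isotopic to $\id_{\widehat{\C}}$ rel.\ $P_1$.

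The main obstacle will be to upgrade $\xi$ from a mere homeomorphism to the identity. Away from the discrete union of critical points and essential singularities of $g_1$ with the $\xi$-preimages of essential singularities and critical values of $\widetilde g$, the relation $g_1 = \widetilde g \circ \xi$ locally expresses $\xi$ as a holomorphic branch of $\widetilde g^{-1}$ composed with $g_1$, so $\xi$ is holomorphic there. At a critical point, the standard $z^d$-normal form for both $g_1$ and $\widetilde g$ forces $\xi$ to be a holomorphic rotation in local coordinates, while at an essential singularity $\xi$ is continuous and holomorphic in a punctured neighborhood and extends by Riemann's removable singularity theorem. Thus $\xi$ is a holomorphic self-homeomorphism of $\widehat{\C}$, i.e., a Möbius transformation; being isotopic to identity rel.\ $P_1$ with $|P_1| = |A| \geq 3$, it fixes at least three points and must equal $\id_{\widehat{\C}}$. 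Therefore $g_1 = \widetilde g = N \circ g_2 \circ N^{-1}$, which rearranges to $M \circ g_1 = g_2 \circ M$, as required.
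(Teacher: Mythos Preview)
Your argument is correct in outline and matches the paper's strategy through the point where you obtain the homeomorphism $\xi$ isotopic to $\id_{\widehat{\C}}$ rel.\ $P_1$ with $g_1=\widetilde g\circ\xi$. The divergence is in how you upgrade $\xi$ to a M\"obius transformation. The paper bypasses your local holomorphicity analysis entirely: since both $(N\circ\varphi_2)\circ f\circ(\psi_1')^{-1}=g_1$ and $(N\circ\varphi_2)\circ f\circ(N\circ\psi_2)^{-1}=\widetilde g$ are holomorphic, the uniqueness clause of Proposition~\ref{prop: def of sigma map} (equivalently Proposition~\ref{prop: admissible type}) says $\psi_1'$ and $N\circ\psi_2$ differ by post-composition with a M\"obius transformation, so $\xi=(N\circ\psi_2)\circ(\psi_1')^{-1}$ is M\"obius immediately. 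Your hands-on route works, but it is reproving a result already available, and your phrase ``discrete union of \dots\ essential singularities'' is not quite right in this paper's setting, where the essential singularity set is only an at most countable closed set; you would need Lemma~\ref{lemm: extensions} (or the Morera argument in its proof) rather than pointwise removable singularities to handle possible accumulation.

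Two further small gaps: you tacitly assume $|A|\geq 3$ both when invoking $\sigma_{f,A}$ and when concluding that a M\"obius map fixing $P_1$ is the identity. The paper notes this and disposes of the case $|A|=2$ separately via \cite[Proposition~2.3]{farb_margalit} (cf.\ Proposition~\ref{prop: small set}).
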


\begin{proof}
    According to Remark \ref{rem: comb equiv}, there exist orientation-preserving homeomorphisms $\varphi_1$, $\varphi_2$, $\psi_1$, $\psi_2 \colon S^2 \to \widehat{\C}$ such that $\varphi_i$ and $\psi_i$ are isotopic rel.\ $A$, $\psi_i(A) = \varphi_i(A) = P_i$, and $g_i = \varphi_i \circ f \circ \psi_i^{-1}$ for each $i = 1, 2$. Clearly, if $|A| \geq 3$, the points $\tau_i = [\varphi_i] = [\psi_i]$, $i = 1, 2$ are fixed points of the pullback map $\sigma_{f, A}$. However, item \eqref{it: not 2,2,2,2 case} of Proposition \ref{prop: propeties of pullback map} demonstrates that $\sigma_{f, A}$ can have at most one fixed point in the Teichm\"uller space $\T_A$. Therefore, $\tau_1 = \tau_2$, and there exists a M\"obius transformation $M$ such that $\varphi_2$ is isotopic rel.\ $A$ to $M \circ \varphi_1$. Consequently, $\psi_2$ must isotopic rel.\ $A$ to $M \circ \psi_1$ and $M(P_1) = P_2$. Corollary \ref{corr: isotopy lifting property} then implies that $g_1$ and~$g_2$ must be conjugate by $M$. If $|A| = 2$, the same conclusion follows from \cite[Proposition 2.3]{farb_margalit}.
\end{proof}

\subsection{Obstructions}\label{subsec: obstructions}

Let $A$ be a finite subset of $S^2$. We say that a simple closed curve $\gamma \subset S^2 - A$ is \textit{essential} in $S^2 - A$ if each connected component of $S^2 - \gamma$ contains at least two points of the set $A$; otherwise $\gamma$ is called \textit{non-essential}. In other words, $\gamma$ is essential in $S^2 - A$ if it cannot be shrinked to a point via a~homotopy~in~$S^2 - A$. It is easy to see that this property is independent from the homotopy class of the curve in $S^2 - A$. Finally, we note that essential simple closed curves in $S^2 - A$ exist if and only if $|A| \geq 4$.

Now, let $f \colon S^2 \dto S^2$ be a topologically holomorphic map of finite type, and let $A$ and $B$ be two finite sets with $S_f \subset A$ and $B \subset \overline{f^{-1}(A)}$. Suppose that $\gamma \subset S^2 - A$ and $\widetilde{\gamma} \subset S^2 - \overline{f^{-1}(A)}$ are two simple closed curves such that $\widetilde{\gamma} \subset f^{-1}(\gamma)$. If $\gamma$ is non-essential in $S^2 - A$, then $\widetilde{\gamma}$ is also non-essential in $S^2 - B$ (see, for instance, \cite[Propositions~2.11 and~2.12]{our_approx}). However, if $\gamma$ is essential in $S^2 - A$, then $\widetilde{\gamma}$ may or may not be essential in $S^2 - B$. A \textit{multicurve} $\mathcal{G}$ in $S^2 - A$ is a finite collection of essential simple closed curves in $S^2 - A$ that are pairwise disjoint and pairwise non-homotopic in $S^2 - A$.

\begin{definition}\label{def: levy cycle}
    Let $f \colon (S^2, A) \rto$ be a Thurston map. A collection $\mathcal{G} = \{\gamma_1, \gamma_2, \dots, \gamma_r\}$ of essential simple closed curves in $S^2 - A$ is called a \textit{Levy cycle} for the Thurston map $f\colon (S^2, A) \rto$ if there exists a cyclic permutation $\sigma$ of $\{1, 2, \dots, r\}$ so that for every $i$ there is a simple closed curve $\widetilde{\gamma}_i \in f^{-1}(\gamma_{i})$ that is homotopic in $S^2 - A$ to $\gamma_{\sigma(i)}$ and $\deg(f|\widetilde{\gamma}_i \colon \widetilde{\gamma}_i \to \gamma_i) = 1$. 
    A Levy cycle $\mathcal{G}$ is \textit{degenerate} if each $\gamma_i$ bounds an open Jordan region $D_i$ so that for every~$i$ some connected component $\widetilde{D}_i$ of $f^{-1}(D_i)$ is homotopic in $S^2 - A$ to $D_{\sigma(i)}$ and maps with degree~$1$ by the map $f$ to $D_i$. 
\end{definition}

The following observation is widely known in the context of finite degree Thurston maps \cite[Exercise 10.3.6]{Hubbard_Book_2}, and its proof extends to the case of transcendental Thurston maps as well \cite[Proposition 2.11]{small_postsingular_set}.

\begin{proposition}\label{prop: levy cycles are obstructions}
    A Thurston map $f\colon (S^2, A) \rto$ having a Levy cycle is obstructed.
\end{proposition}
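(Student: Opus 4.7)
My plan is to argue by contradiction using the classical hyperbolic length decrease argument of Thurston, and to check that it carries over verbatim to the transcendental setting. Assume $f\colon (S^2,A)\rto$ is realized by a psf holomorphic map $g\colon (\widehat\C,P)\rto$ via a homeomorphism $\phi\colon S^2\to\widehat\C$ with $\phi(A)=P$, and transport the Levy cycle $\{\gamma_1,\dots,\gamma_r\}$ to a collection of essential curves in $\widehat\C-P$, still denoted $\gamma_i$. The existence of a Levy cycle forces $|A|\geq 4$, so both $\widehat\C-P$ and $\widehat\C-\overline{g^{-1}(P)}$ are hyperbolic Riemann surfaces. Using Corollary~\ref{corr: homotopy lifting for curves}, I first replace each $\gamma_i$ by its unique closed geodesic representative in $\widehat\C-P$ of hyperbolic length $\ell_i>0$, while keeping degree-one preimages $\widetilde\gamma_i\subset g^{-1}(\gamma_i)$ that are homotopic in $\widehat\C-P$ to $\gamma_{\sigma(i)}$.

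The key point is that the restriction $g\colon\widehat\C-\overline{g^{-1}(P)}\to\widehat\C-P$ is a holomorphic covering map (as recalled just before Proposition~\ref{prop: composition}), hence a local isometry for the hyperbolic metrics of its domain and target. Since $g$ maps $\widetilde\gamma_i$ homeomorphically onto the geodesic $\gamma_i$, the lift $\widetilde\gamma_i$ is itself a closed geodesic of $\widehat\C-\overline{g^{-1}(P)}$ of hyperbolic length exactly $\ell_i$. Next, the inclusion $\widehat\C-\overline{g^{-1}(P)}\hookrightarrow\widehat\C-P$ is a holomorphic map between hyperbolic domains which is not a covering (it is not surjective), so by the strict Schwarz--Pick lemma it strictly decreases the hyperbolic metric pointwise. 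Measuring $\widetilde\gamma_i$ in the metric of $\widehat\C-P$ therefore produces a length strictly smaller than $\ell_i$, and since $\gamma_{\sigma(i)}$ is length-minimizing in its free homotopy class I obtain
\[
\ell_{\sigma(i)}\;\leq\;\ell_h\bigl(\widetilde\gamma_i,\widehat\C-P\bigr)\;<\;\ell_i
\qquad\text{for every }i=1,\dots,r.
\]
Summing over $i$ and using that $\sigma$ is a permutation yields $\sum_i\ell_i<\sum_i\ell_i$, the desired contradiction.

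The only non-routine step is ensuring that the Schwarz--Pick inequality above is strict, which reduces to verifying $\overline{g^{-1}(P)}\supsetneq P$. In the transcendental case this is automatic, since by Great Picard's Theorem $g^{-1}(P)$ is infinite while $P$ is finite. In the finite degree case with $d:=\deg(g)\geq 2$, Riemann--Hurwitz applied to the unramified covering $g\colon\widehat\C-g^{-1}(P)\to\widehat\C-P$ (unramified because $P\supset S_g$ contains all critical values) gives $|g^{-1}(P)|=d|P|-2(d-1)$, so $|g^{-1}(P)|-|P|=(d-1)(|P|-2)>0$ using $|P|=|A|\geq 4>2$. I expect no genuine obstacle beyond this Riemann--Hurwitz check; once strictness of the Schwarz--Pick inequality is in hand, the proof is a clean transcription of the classical length argument to the countably-punctured setting.
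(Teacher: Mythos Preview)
Your proof is correct. The paper does not actually supply its own argument for this proposition; it simply cites the classical finite-degree case \cite[Exercise~10.3.6]{Hubbard_Book_2} and the transcendental extension \cite[Proposition~2.11]{small_postsingular_set}, and your hyperbolic length-decrease argument is precisely the standard proof those references contain, together with the correct verification (via Great Picard and Riemann--Hurwitz) that $\overline{g^{-1}(P)}\supsetneq P$ so that the Schwarz--Pick inequality for the inclusion is strict.
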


If $\mathcal{G} = \{\gamma_1, \gamma_2, \dots, \gamma_r\}$ is a Levy cycle for a Thurston map $f \colon (S^2, A) \rto$, then the number~$r$ is called the \textit{period} of the Levy cycle $\mathcal{G}$. We say that the Levy cycle $\mathcal{G}$ is \textit{minimal}, if the curves in $\mathcal{G}$ are pairwise non-homotopic in $S^2 - A$. Clearly, if $\mathcal{G}$ is not minimal, we can extract $\mathcal{G}' \subset \mathcal{G}$ that provides a minimal Levy cycle for $f \colon (S^2, A) \rto$.

We observe that if $\mathcal{G} = \{\gamma_1, \gamma_2, \dots, \gamma_r\}$ is a Levy cycle for a Thurston map $f \colon (S^2, A) \rto$ and $\mathcal{G}' = \{\gamma_1', \gamma_2', \dots, \gamma_r'\}$ is a collection of simple closed curves in $S^2 - A$ such that $\gamma_i$ and $\gamma_i'$ are homotopic in $S^2 - A$ for $i=1, 2, \dots, r$, then by Corollary \ref{corr: homotopy lifting for curves}, $\mathcal{G}'$ is also a Levy cycle for $f \colon (S^2, A) \rto$. Furthermore, if $\mathcal{G}$ is degenerate, then according to the second part of Corollary \ref{corr: homotopy lifting for curves}, $\mathcal{G}'$ must also be degenerate.

If additionally $\mathcal{G}$ is a multicurve, we say that $\mathcal{G}$ is a \textit{Levy multicurve}. When the period of a Levy cycle $\mathcal{G}$ equals 1, then $\mathcal{G}$ consists of a single essential simple closed curve $\gamma$ in $S^2 - A$ that is called a \textit{Levy fixed} or \textit{Levy invariant curve}. When the Levy cycle $\mathcal{G}$ of period 1 is degenerate, then $\gamma$ is called \textit{degenerate} Levy fixed (or invariant) curve.

It is known that a Levy cycle, or even a Levy fixed curve, is not necessarily degenerate (see, for instance, \cite[Section 3]{critically_fixed} for a large class of such examples). At the same time, it is easy construct an example of a degenerate Levy cycle $\mathcal{G}$ for a Thurston map $f \colon (S^2, A) \rto$ that does not form a Levy multicurve even after replacing curves in $\mathcal{G}$ with simple closed curves homotopic to them in $S^2 - A$.

\section{Marked Thurston maps}\label{sec: marked Thurston maps}

In this section, we investigate the realizability question and Levy cycles for marked Thurston maps as introduced in Section \ref{subsec: thurston maps}. In Section \ref{subsec: detecting levy cycles}, we present tools for detecting and analyzing Levy cycles. Further, in Sections \ref{subsec: forgetting marked points}-\ref{subsec: thurston maps with extra marked points}, we explore how the properties of a Thurston map $f \colon (S^2, A) \rto$ are related to those of the Thurston map $f \colon (S^2, B) \rto$, where $B \subset A$ is an $f$-pseudo-invariant subset containing $S_f$. Also, in Section \ref{subsec: passing to an iterate}, we investigate the relationship between the properties of a Thurston map and those of its iterates. Main Theorem~\ref{mainthm A} is proven in Section~\ref{subsec: thurston maps with extra marked points}, while Propositions~\ref{prop: first prop intro} and~\ref{prop: second prop intro} from the introduction are established in Sections~\ref{subsec: extra fixed point} and ~\ref{subsec: passing to an iterate}, respectively.

\subsection{Detecting Levy cycles}\label{subsec: detecting levy cycles}

Let $X$ be a hyperbolic Riemann surface, and let $\alpha \subset X$ be a simple closed $C^1$-curve. The length of $\alpha$ with respect to the hyperbolic metric on~$X$ is denoted by $\ell_{X}(\alpha)$. Now, let $\alpha$ be an essential simple closed curve in the punctured Riemann sphere $X := \widehat{\C} - P$, where $3 \leq |P| < \infty$. According to \cite[Proposition 3.3.8]{Hubbard_Book_1}, there exists a unique closed geodesic in $X$ that is homotopic (in $X$) to $\alpha$. Note that this geodesic should be necessarily simple \cite[Proposition 3.3.9]{Hubbard_Book_1}. 

Let $\gamma$ be an essential simple closed curve in $S^2 - A$, and $\tau = [\varphi]$ be a point in the Teichm\"uller space $\T_A$. We define $l_{\gamma}(\tau)$ as the length of the unique hyperbolic geodesic in $\widehat{\C} - \varphi(A)$ that is homotopic in $\widehat{\C} - \varphi(A)$ to $\varphi(\alpha)$. Additionally, we introduce $w_{\gamma}(\tau) := \log l_{\gamma}(\tau)$. It is known that $w_{\gamma} \colon \T_A \to  \R$ is a $1$-Lipschitz function \cite[Theorem~7.6.4]{Hubbard_Book_1}.

Let $X$ be a hyperbolic Riemann surface, and suppose $\alpha \subset X$ is a simple closed geodesic with $\ell_{X}(\alpha) < \ell^*$, where $\ell^* := \log(3 + 2 \sqrt{2})$. In this case, we say that $\alpha$ is \textit{short}. As stated in \cite[Proposition 3.3.8 and Corollary~3.8.7]{Hubbard_Book_1}, two short simple closed geodesics on a hyperbolic Riemann surface $X$ are either disjoint and non-homotopic in $X$, or they coincide. Therefore, \cite[Proposition 3.3.8]{Hubbard_Book_1} implies that a punctured Riemann sphere $\widehat{\C} - P$, where $3 \leq |P| < \infty$, can have at most $|P| - 3$ distinct short simple closed geodesics.


The following result allows us to identify a Levy cycle for a Thurston map based on the behavior of the corresponding pullback map.

\begin{proposition}\label{prop: finding a levy cycle}
    Suppose that $f \colon (S^2, A) \rto$ is a Thurston map with $|A| = k + 3, k \geq 1$. Let $\tau_1 \in \T_A$ and $\tau_i = [\varphi_i] = \sigma_{f, A}^{\circ (i - 1)}(\tau)$ for $i = 1, 2, \dots, k + 1$, where the representatives $\varphi_i \colon S^2 \to \widehat{\C}$ are chosen so that the map $g_i := \varphi_i \circ f \circ \varphi_{i + 1}^{-1}\colon \widehat{\C} \dto \widehat{\C}$ is holomorphic for each $i = 1, 2 \dots, k$. Assume that there exists an annulus $U \subset \widehat{\C}$ such that
    \begin{itemize}
        \item each connected component of $\widehat{\C} - U$ contains at least two points of $\varphi_{k + 1}(A)$;
    
        \item $\mod(U) > (k + 4) \pi e^{k d_0} / \ell^*$, where $d_0 := d_T(\tau_1, \tau_2)$;
        
        \item $g_1 \circ g_2 \circ \dots \circ g_k$ is defined and injective on $U$.
    \end{itemize}
    Then the Thurston map $f \colon (S^2, A) \rto$ has a Levy multicurve $\mathcal{G}$. Moreover, for each $\gamma \in \mathcal{G}$, the inequality $l_{\gamma}(\tau_i) < (k + 4)\pi e^{k d_0} / \mathrm{mod}(U)$ holds for each $i = 1, 2, \dots, k + 1$.
\end{proposition}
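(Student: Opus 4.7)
The plan is to construct $k+1$ essential simple closed curves $\gamma_1,\ldots,\gamma_{k+1}\subset S^2 - A$ with each $\gamma_{j+1}$ a degree-one component of $f^{-1}(\gamma_j)$, bound their hyperbolic lengths at every $\tau_i$ by $(k+4)\pi e^{kd_0}/\mod(U)$, and then apply a pigeonhole argument in $\widehat{\C}-\varphi_{k+1}(A)$ to extract a Levy multicurve.

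First I would select a simple closed curve $\alpha_{k+1}\subset U$ in the core class of $U$, in generic position so that its forward images $\alpha_j:=g_j\circ\cdots\circ g_k(\alpha_{k+1})\subset U_j:=g_j\circ\cdots\circ g_k(U)$ avoid $\varphi_j(A)$ for every $j$ and sit inside bad-free sub-annuli of $U_j$. Setting $\gamma_j:=\varphi_j^{-1}(\alpha_j)$, the injectivity of $g_1\circ\cdots\circ g_k$ on $U$ makes each $\gamma_j$ a simple closed curve, and the biholomorphism $g_l|U_{l+1}\colon U_{l+1}\to U_l$ makes $\gamma_{j+1}$ a degree-one component of $f^{-1}(\gamma_j)$. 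Essentiality of $\gamma_{k+1}$ in $S^2-A$ follows from the hypothesis on components of $\widehat{\C}-U$; essentiality propagates downward because a non-essential $\gamma_j$ bounding a disk $D\subset S^2$ with $|D\cap A|\leq 1$ would force the degree-one preimage component of $D$ bordered by $\gamma_{j+1}$ to be a disk with at most one point of $A$, contradicting essentiality of $\gamma_{j+1}$.

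The central analytic step is the length estimate at $\tau_j$: since $U_j$ has modulus $\mod(U)$ and $|\varphi_j(A)\cap U_j|\leq|A|=k+3$, partitioning $U_j$ into $k+4$ consecutive sub-annuli of equal modulus $\mod(U)/(k+4)$ and invoking pigeonhole yields one sub-annulus $W_j$ disjoint from $\varphi_j(A)$; arranging the initial $\alpha_{k+1}$ so that $\alpha_j\subset W_j$ makes $W_j$ an embedded annulus in $\widehat{\C}-\varphi_j(A)$ in the homotopy class $[\gamma_j]$, so Maskit's inequality gives $l_{\gamma_j}(\tau_j)\leq(k+4)\pi/\mod(U)$. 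The $1$-Lipschitzness of $w_{\gamma_j}$ together with that of $\sigma_{f,A}$ yields $d_T(\tau_j,\tau_i)\leq|i-j|d_0\leq kd_0$ and hence
$$l_{\gamma_j}(\tau_i)\leq l_{\gamma_j}(\tau_j)\,e^{kd_0}\leq\frac{(k+4)\pi e^{kd_0}}{\mod(U)}<\ell^*$$
by the modulus hypothesis.

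Each $\gamma_j$ thus has a short closed geodesic representative in $\widehat{\C}-\varphi_{k+1}(A)$. Since this hyperbolic surface admits at most $|A|-3=k$ distinct short simple closed geodesics, pigeonhole produces indices $i_1<i_2$ with $\gamma_{i_1}$ and $\gamma_{i_2}$ homotopic in $S^2-A$. The collection $\{\gamma_{i_1},\ldots,\gamma_{i_2-1}\}$ is then a
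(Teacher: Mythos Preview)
Your overall architecture matches the paper's proof, but the ``central analytic step'' contains a real gap. You choose the sub-annuli $W_j\subset U_j$ independently for each $j$ (by partitioning $U_j$ into $k+4$ parallel pieces and picking one disjoint from $\varphi_j(A)$), and then assert that you can ``arrange the initial $\alpha_{k+1}$ so that $\alpha_j\subset W_j$''. This is not generally possible: once $\alpha_{k+1}$ is fixed, the curve $\alpha_j=(g_j\circ\cdots\circ g_k)(\alpha_{k+1})$ is determined, so the requirement $\alpha_j\subset W_j$ pulls back to the constraint that $\alpha_{k+1}$ lie in $(g_j\circ\cdots\circ g_k)^{-1}(W_j)$, a specific parallel sub-annulus of $U$ of modulus $\mod(U)/(k+4)$. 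For different values of $j$ these pullbacks may be pairwise disjoint in $U$, leaving no admissible choice of $\alpha_{k+1}$. And you cannot simply drop the requirement $\alpha_j\subset W_j$: if points of $\varphi_j(A)$ sit between $\alpha_j$ and $W_j$ inside $U_j$, then the core of $W_j$ need not be homotopic to $\alpha_j$ in $\widehat{\C}-\varphi_j(A)$, so $W_j$ gives no bound on $l_{\gamma_j}(\tau_j)$.

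The paper circumvents this by exploiting the $f$-pseudo-invariance of $A$, which you never use. One selects a single parallel sub-annulus $V\subset U_1$ (the full forward image) with $\mod(V)\geq\mod(U)/(k+4)$ and $V\cap\varphi_1(A)=\emptyset$, and then \emph{pulls it back} through the biholomorphisms $g_j|U_{j+1}$ to obtain nested sub-annuli $V_j\subset U_j$. Because $g_j(\varphi_{j+1}(A)\cap U_{j+1})\subset\varphi_j(A)$, each $V_j$ is automatically disjoint from $\varphi_j(A)$. Taking $\alpha_{k+1}$ to be the core geodesic of the deepest $V_{k+1}$ then gives $\alpha_j\subset V_j$ for every $j$ simultaneously, and Schwarz--Pick yields $\ell_{X_j}(\alpha_j)\leq\ell_{V_j}(\alpha_j)=\pi/\mod(V)\leq(k+4)\pi/\mod(U)$. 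With this correction in place, your Lipschitz and pigeonhole steps go through exactly as you wrote them (taking $i_2-i_1$ minimal to ensure the resulting collection of short geodesics is a genuine multicurve).
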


\begin{proof}
    Define $U_1 := U$ and $U_{i + 1} := g_{k - i + 1}(U_i)$ for each $i = 1, 2, \dots, k$. Notice that each $U_i$ is an annulus with $\mod(U_i) = \mod(U)$. Therefore, we can find a parallel subannulus $V$ of $U_{k + 1}$ such that $\mod(V) \geq \mod(U) / (k + 4)$ and $V$ does not contain any points of $\varphi_1(A)$. Next, define $V_1 := V$, and for each $i=1, 2, \dots, k$, let $V_{i + 1}$ be the annulus coinciding with the connected component of $g_i^{-1}(V_i)$ within $U_{k - i + 1}$. Clearly, $V_i \cap \varphi_i(A) = \emptyset$ for each $i = 1, 2, \dots, k + 1$. It also can be easily verified that each connected component of $\widehat{\C} - V_i$ contains at least two points of $\varphi_i(A)$; otherwise, there would exist a connected component of $\widehat{\C} - U$ containing at most one point of~$\varphi_{k + 1}(A)$ (see, for instance, \cite[Theorems 5.10 and 5.11]{Forster}).

    Let $\alpha_{k + 1}$ be a unique hyperbolic geodesic of $V_{k + 1}$. It is known that $\alpha_{k + 1}$ is a simple closed geodesic that forms a core curve of the annulus $V_{k + 1}$, and its length in $V_{k + 1}$ is given by $\ell_{V_{k + 1}}(\alpha_{k + 1}) = \pi / \mod(V_{k + 1})$; see \cite[Proposition 3.3.7]{Hubbard_Book_1} For each $i = 2, 3, \dots, k + 1$, let $\alpha_{i - 1} := g_{i - 1}(\alpha_i)$. Next, define $\beta_i := \varphi_i^{-1}(\alpha_i)$ for each $i = 1, 2, \dots, k + 1$. It is straightforward to verify that $\beta_i$ is an essential simple closed curve in $S^2 - A$, with $\beta_{i - 1} = f(\beta_i)$ and $\deg(f|\beta_i \colon \beta_i \to \beta_{i - 1}) = 1$; see Figure \ref{fig: huge diag} for further clarifications.
    \begin{figure}[t]
    \begin{tikzcd}
        \beta_{k + 1} \arrow[r, hook] \arrow[d, "f"] & S^2 \arrow[d,"f", dashed] \arrow[r, "\varphi_{k + 1}"] & S^2 \arrow[d,"g_k", dashed] & U_1 \arrow[l, hook] \arrow[d,"g_k"] & V_{k + 1}  \arrow[d,"g_k"] \arrow[l, hook] & \alpha_{k + 1} \arrow[l, hook]  \arrow[d,"g_k"] \\
        \beta_k \arrow[r, hook] \arrow[dd, dotted] & S^2 \arrow[r, "\varphi_k"] \arrow[dd, dotted] & S^2 \arrow[dd, dotted] & U_2 \arrow[l, hook] \arrow[dd, dotted] & V_k \arrow[l, hook] \arrow[dd, dotted] & \alpha_k \arrow[l, hook] \arrow[dd, dotted] \\
        & & & & & \\
        \beta_3  \arrow[r, hook] \arrow[d, "f"] & S^2  \arrow[d,"f", dashed] \arrow[r, "\varphi_3"] & S^2 \arrow[d,"g_2", dashed] & U_{k - 1} \arrow[l, hook] \arrow[d,"g_2"] & V_3 \arrow[l, hook]  \arrow[d,"g_2"] & \alpha_3 \arrow[l, hook]  \arrow[d,"g_2"] \\
        \beta_2 \arrow[r, hook] \arrow[d, "f"] & S^2 \arrow[d,"f", dashed] \arrow[r, "\varphi_2"] & S^2 \arrow[d,"g_1", dashed] & U_k \arrow[l, hook]  \arrow[d,"g_1"] & V_2 \arrow[l, hook]  \arrow[d,"g_1"] & \alpha_2 \arrow[l, hook]  \arrow[d,"g_1"] \\
        \beta_1 \arrow[r, hook]  & S^2 \arrow[r, "\varphi_1"] & S^2 & U_{k + 1} \arrow[l, hook] & V_1 \arrow[l, hook] & \alpha_1 \arrow[l, hook]
    \end{tikzcd}
    \caption{Diagram depicting the relationships among the introduced~objects.}\label{fig: huge diag}
    \end{figure}
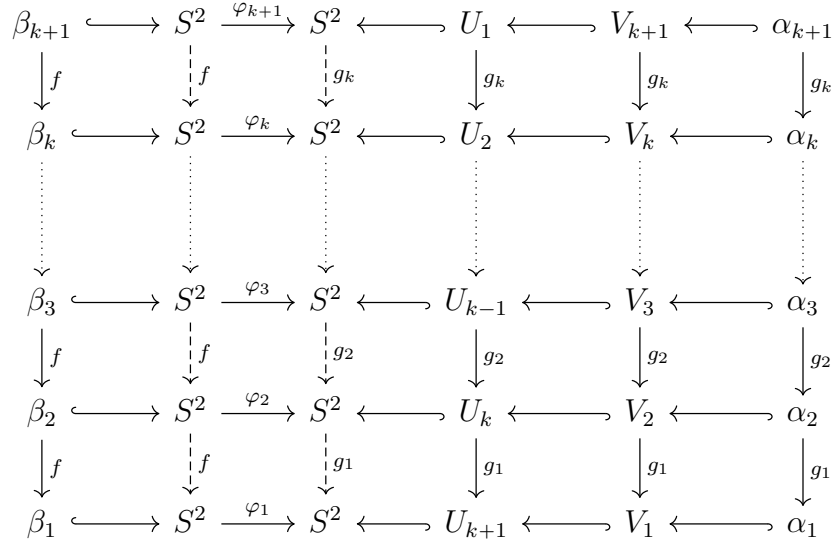

    Let $X_i := \widehat{\C} - \varphi_i(A)$ for each $i = 1, 2, \dots, k + 1$. Recall that $g_i|V_{i + 1} \colon V_{i + 1} \to V_i$ is a biholomorphism for each $i = 1, 2, \dots, k$, and therefore, by Schwarz-Pick's lemma, we have $\ell_{V_i}(\alpha_i) = \ell_{V_j}(\alpha_j)$ for all $i$ and $j$. Thus, according to Schwarz-Pick's lemma and the choice of $\alpha_{k + 1}$, we have: 
    $$
        \ell_{X_i}(\alpha_i) \leq \ell_{V_i}(\alpha_i) = \ell_{V_{k + 1}}(\alpha_{k + 1}) = \frac{\pi}{\mod(V_{k + 1})} = \frac{\pi}{\mod(V)} \leq \frac{(k + 4) \pi}{\mod(U)}.
    $$

    By item \eqref{it: general} of Proposition \ref{prop: propeties of pullback map}, we have $d_T(\tau_i, \tau_j) \leq |i - j| \cdot d_0$ for each $i, j = 1, 2, \dots, k+1$. Therefore, since the function $w_{\beta_i} \colon \T_A \to \R$ is 1-Lipschitz, it follows that $l_{\beta_i}(\tau_j) < (k + 4) \pi e^{k d_0} / \mod(U)$. In particular, there exist short simple closed geodesics $\delta_1, \delta_2, \dots, \delta_{k + 1}$ in $X_1$, with each $\delta_i$ homotopic in $X_1$ to $\varphi_1(\beta_i)$.
    
    As it was discussed previously, any two short geodesics, $\delta_i$ and $\delta_j$, are either disjoint and non-homotopic in $X_1$, or they coincide. However, the punctured Riemann sphere $X_1 = \widehat{\C} - \varphi_1(A)$ can contain at most $k$ distinct short simple closed geodesics. Therefore, there must be distinct indices $i_1$ and $i_2$ such that $\delta_{i_1} = \delta_{i_2}$.
    Without loss of generality, assume that $i_2$ is the minimal index such that $i_2 > i_1$ and $\delta_{i_1} = \delta_{i_2}$. Since $\varphi_1^{-1}(\delta_i)$ is homotopic in $S^2 - A$ to $\beta_i$ for each $i = 1, 2, \dots k + 1$, Corollary \ref{corr: homotopy lifting for curves} implies that the multicurve $\{\varphi_1^{-1}(\delta_{i_1 + 1}), \varphi_1^{-1}(\delta_{i_1 + 2}), \dots, \varphi_1^{-1}(\delta_{i_2})\}$ forms a Levy multicurve for the Thurston map $f \colon (S^2, A) \rto$. Finally, note that $l_{\varphi_1^{-1}(\delta_i)}(\tau) = l_{\beta_i}(\tau)$ for each $i=1, 2,\dots, k + 1$ and $\tau \in \T_A$, and the rest follows from the previous discussion.
\end{proof}

The following proposition demonstrates that, under specific conditions, a Levy multicurve can be obtained from a Levy cycle by simply modifying the homotopy classes of the corresponding simple closed curves.

\begin{proposition}\label{prop: levy cycles to levy multicurves}
    Let $\mathcal{G} = \{\gamma_1, \gamma_2, \dots, \gamma_r\}$ be a minimal Levy cycle for a Thurston map $f \colon (S^2, A) \rto$. Suppose there exist a curve $\gamma \in \mathcal{G}$ and a point $\tau$ in the Teichm\"uller space~$\T_A$ such that $l_{\gamma}(\tau) < \ell^* e^{-(r - 1)d_0}$, where $d_0 := d_T(\tau, \sigma_{f, A}(\tau))$. Then, there exists a Levy multicurve $\mathcal{G}' = \{\gamma_1', \gamma_2', \dots, \gamma_r'\}$ for $f \colon (S^2, A) \rto$ such that each $\gamma_i'$ is homotopic in $S^2 - A$ to $\gamma_i$. Moreover, if $\mathcal{G}$ is a degenerate Levy cycle, then $\mathcal{G}'$ is a degenerate Levy multicurve.
\end{proposition}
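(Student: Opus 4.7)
The plan is to exhibit a single point $\tau^*\in\T_A$ at which every curve $\gamma_i$ of the cycle admits a \emph{short} geodesic representative (i.e.\ of hyperbolic length $<\ell^*$). By the short-geodesic dichotomy recalled at the start of Section~\ref{subsec: detecting levy cycles}, these representatives will then be pairwise either disjoint or coincident; the minimality of $\mathcal{G}$ will force them to be distinct and hence pairwise disjoint. Pulling them back to $S^2 - A$ and invoking the paragraph following Proposition~\ref{prop: levy cycles are obstructions} (that a curve-by-curve homotopic replacement of a Levy cycle remains a Levy cycle, and preserves degeneracy) will then yield the desired multicurve $\mathcal{G}'$.

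The core ingredient is the monotonicity
\[
l_{\gamma_{\sigma(i)}}(\sigma_{f, A}(\tau)) \;\le\; l_{\gamma_i}(\tau) \qquad \text{for every } \tau\in\T_A \text{ and every } i.
\]
To establish it, I would choose representatives $\tau = [\varphi]$ and $\sigma_{f,A}(\tau) = [\psi]$ with $g := \varphi \circ f \circ \psi^{-1}\colon \widehat{\C}\dto\widehat{\C}$ holomorphic, let $\alpha_i$ denote the unique geodesic of $\widehat{\C} - \varphi(A)$ homotopic to $\varphi(\gamma_i)$, and use Corollary~\ref{corr: homotopy lifting for curves} to pick a component $\widetilde{\alpha}_i$ of $g^{-1}(\alpha_i)$ that is homotopic in $\widehat{\C} - \overline{g^{-1}(\varphi(A))}$ to $\psi(\widetilde{\gamma}_i)$ and satisfies $\deg(g|\widetilde{\alpha}_i) = 1$. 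Since $g$ restricts to a covering map $\widehat{\C} - \overline{g^{-1}(\varphi(A))}\to\widehat{\C} - \varphi(A)$, the restriction $g|\widetilde{\alpha}_i$ is a hyperbolic isometry, so $\ell_{\widehat{\C}-\overline{g^{-1}(\varphi(A))}}(\widetilde{\alpha}_i) = l_{\gamma_i}(\tau)$; the inclusion $\widehat{\C} - \overline{g^{-1}(\varphi(A))} \subset \widehat{\C} - \psi(A)$ together with Schwarz--Pick then gives $\ell_{\widehat{\C}-\psi(A)}(\widetilde{\alpha}_i)\le l_{\gamma_i}(\tau)$. Because $\widetilde{\alpha}_i$ is freely homotopic in $\widehat{\C}-\psi(A)$ to $\psi(\gamma_{\sigma(i)})$ and the hyperbolic geodesic minimizes length in its free homotopy class, the claimed inequality follows.

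Setting $\tau_k := \sigma_{f,A}^{\circ k}(\tau)$ and iterating the monotonicity from $\gamma_{i_0} := \gamma$, I obtain $l_{\gamma_{\sigma^k(i_0)}}(\tau_k) \le l_{\gamma_{i_0}}(\tau)$ for all $k\ge 0$. Taking $\tau^* := \tau_{r-1}$, the 1-Lipschitz property of $w_{\gamma_j}$ together with the estimate $d_T(\tau_k,\tau^*) \le (r-1-k)d_0$ (which follows from the triangle inequality and the 1-Lipschitzness of $\sigma_{f,A}$ from item~\eqref{it: general} of Proposition~\ref{prop: propeties of pullback map}) yields
\[
l_{\gamma_{\sigma^k(i_0)}}(\tau^*)\;\le\;e^{(r-1-k)d_0}\,l_{\gamma_{\sigma^k(i_0)}}(\tau_k)\;\le\;e^{(r-1)d_0}\,l_{\gamma_{i_0}}(\tau)\;<\;\ell^*
\]
for each $k = 0,1,\dots,r-1$; since $\sigma$ is a cyclic permutation of $\{1,\dots,r\}$, this covers every curve $\gamma_i$ of $\mathcal{G}$. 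Writing $\tau^* = [\varphi^*]$ and letting $\delta_i$ denote the short simple closed geodesic of $\widehat{\C} - \varphi^*(A)$ homotopic to $\varphi^*(\gamma_i)$, the minimality of $\mathcal{G}$ forces the $\delta_i$ to be pairwise non-homotopic, so the short-geodesic dichotomy makes them pairwise disjoint. The curves $\gamma_i' := (\varphi^*)^{-1}(\delta_i)$ then constitute the sought Levy multicurve, with degeneracy inherited from $\mathcal{G}$ via Corollary~\ref{corr: homotopy lifting for curves}.

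The main obstacle, I expect, is keeping the bookkeeping straight for which hyperbolic surface each length is measured on: one must carefully distinguish among the three surfaces $\widehat{\C} - \varphi(A)$, $\widehat{\C} - \overline{g^{-1}(\varphi(A))}$, and $\widehat{\C} - \psi(A)$, and apply Schwarz--Pick on the correct pair in the correct direction to obtain the monotonicity $l_{\gamma_{\sigma(i)}}(\sigma_{f, A}(\tau)) \le l_{\gamma_i}(\tau)$ and not its reverse.
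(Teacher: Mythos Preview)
Your proof is correct and follows essentially the same approach as the paper: the monotonicity $l_{\gamma_{\sigma(i)}}(\sigma_{f,A}(\tau))\le l_{\gamma_i}(\tau)$ via Schwarz--Pick on the covering $\widehat{\C}-\overline{g^{-1}(\varphi(A))}\to\widehat{\C}-\varphi(A)$, the $1$-Lipschitz transport of $w_{\gamma_j}$ along the $\sigma_{f,A}$-orbit, and the short-geodesic dichotomy combined with minimality are all used identically. The only cosmetic difference is that you collect all short geodesics at $\tau^*=\sigma_{f,A}^{\circ(r-1)}(\tau)$, whereas the paper collects them at the initial point $\tau$ itself; both choices work and lead to the same bound $e^{(r-1)d_0}l_{\gamma}(\tau)<\ell^*$.
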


\begin{proof}
    Without loss of generality, assume that $\gamma = \gamma_1$ and $\sigma(i) = i + 1\, \mod\, r$, where $\sigma$ is a cyclic permutation associated with the Levy cycle $\mathcal{G}$ (see Definition \ref{def: levy cycle}).
    
    Let $\tau_i = [\varphi_i] = \sigma_{f, A}^{\circ (i - 1)}(\tau)$ for $i \in \N$, where the representatives $\varphi_i \colon S^2 \to \widehat{\C}$ are chosen so that $g_i := \varphi_i \circ f \circ \varphi_{i + 1}^{-1} \colon \widehat{\C} \dto \widehat{\C}$ is holomorphic. Additionally, define $X_i := \widehat{\C} - \varphi_i(A)$ and $Y_i := \widehat{\C} - \overline{g_i^{-1}(\varphi_i(A))} = \widehat{\C} - \overline{\varphi_{i + 1}(f^{-1}(A))}$ for all $i \in \N$.
    \begin{claim}
        For each $i = 1, 2, \dots, r$, we have the inequality $l_{\gamma_i}(\tau_i) \leq l_{\gamma_1}(\tau_1)$.
    \end{claim}
    \begin{subproof}
        We assume that $r \geq 2$, and begin by showing that $l_{\gamma_{2}}(\tau_2) \leq l_{\gamma_1}(\tau_1)$. The rest then follows by induction.
        There exists a simple closed geodesic $\alpha_1$ in $X_1$ such that $\ell_{X_1}(\alpha_1) = l_{\alpha_1}(\tau_1)$ and $\alpha_1$ is homotopic in $X_1$ to $\varphi_1(\gamma_1)$. Denote $\beta_1 := \varphi_1^{-1}(\alpha_1)$. Since $\beta_1$ is homotopic in $S^2 - A$ to $\gamma_1$, by Corollary~\ref{corr: homotopy lifting for curves}, there exists a simple closed curve $\beta_2 \subset f^{-1}(\beta_1)$ that is homotopic in $S^2 - A$ to $\gamma_2$ and satisfies $\deg(f|\beta_2 \colon \beta_2 \to \beta_1) = 1$. Let $\alpha_2 := \varphi_2(\beta_2) \subset Y_1$. Obviously, $g_1(\alpha_2) = \alpha_1$ and $\deg(g_1|\alpha_2 \colon \alpha_2 \to \alpha_1) = 1$. Therefore, since $g_1|Y_1 \colon Y_1 \to X_1$ is a holomorphic covering map, Schwarz-Pick's lemma gives: 
        $$
            \ell_{X_2}(\alpha_2) \leq \ell_{Y_1}(\alpha_2) = \ell_{X_1}(\alpha_1) = l_{\gamma_1}(\tau_1).
        $$
        At the same time, clearly, $l_{\gamma_2}(\tau_2) \leq \ell_{X_2}(\alpha_2)$, and the rest follows.
        \end{subproof}

        Item \eqref{it: general} of Proposition \ref{prop: propeties of pullback map} implies that $d_T(\tau_i, \tau_j) \leq |i - j| \cdot d_0$ for each $i, j \in \N$. Therefore, based on the claim above and the fact that $w_{\gamma_i} \colon \T_A \to \R$ is $1$-Lipschitz, we conclude that $l_{\gamma_i}(\tau_1) < \ell^*$ for each $i = 1, 2, \dots, r$. Therefore, there exist short simple closed geodesics $\delta_1, \delta_2, \dots, \delta_r$ in $X_1$, where each $\delta_i$ is homotopic in $X_1$ to $\varphi_1(\gamma_i)$. 
        
        Since $\mathcal{G}$ is a minimal Levy cycle, no two geodesics $\delta_i$ and $\delta_j$ are homotopic in $X_1$ for $i \neq j$. Therefore, as each $\delta_i$ is a short simple closed geodesic, $\delta_i$ and $\delta_j$ must be disjoint in $X_1$. Now, define $\gamma_i' := \varphi_1^{-1}(\delta_i)$ and $\mathcal{G}' := \{\gamma_1', \gamma_2', \dots, \gamma_r'\}$, which forms a multicurve in $S^2 - A$. Since $\gamma_i$ and $\gamma_i'$ are homotopic in $S^2 - A$, $\mathcal{G}'$ is a Levy multicurve for the Thurston map $f \colon (S^2, A) \rto$, and if the original Levy cycle $\mathcal{G}$ is degenerate, then $\mathcal{G}'$ is also degenerate.
\end{proof} 

\subsection{Trivial marked points}\label{subsec: forgetting marked points}

Let $f \colon (S^2, A) \rto$ be a Thurston map. We say that a marked point $a \in A$ is \textit{trivial} for $f \colon (S^2, A) \rto$ if $a \in A - P_f$ and either $a$ is strictly pre-periodic, or it eventually lands to an essential singularity of the map $f$ after several iterations (this includes the case when $a$ is already an essential singularity of $f$). It is easy to see that if $a \in A - P_f$ is not a trivial marked point of $f \colon (S^2, A) \rto$, then $a$ must be periodic under $f$. The following proposition demonstrates that adding trivial marked points to the marked set does not affect the essential properties of a Thurston map (cf. \cite[Proposition 2.8]{park2}).

\begin{proposition}\label{prop: trivial marked points}
    Let $f \colon (S^2, A) \rto$ be a Thurston map, where the set $A$ can be decomposed as $A = B \sqcup C$, with $S_f \subset B$, $B$ being $f$-pseudo-invariant, and $C$ consisting of trivial marked points. Then
    \begin{enumerate}
        \item \label{it: realizability and trivial marked points} $f\colon (S^2, B) \rto$ is realized if and only if $f \colon (S^2, A) \rto$ is realized. Moreover, if $|B| \geq 3$ and $\tau \in \T_B$ is a fixed point of the pullback map $\sigma_{f, B}$, then the pullback map $\sigma_{f, A}$ has a unique fixed point in $t^{-1}_{A, B}(\tau)$;

        \item \label{it: realizability and levy cycles} $f \colon (S^2, B) \rto$ has a Levy cycle (or degenerate Levy cycle, Levy multicurve, or degenerate Levy multicurve) $\mathcal{G} = \{\gamma_1, \gamma_2, \dots, \gamma_r\}$, then $f \colon (S^2, A) \rto$ has a Levy cycle (or degenerate Levy cycle, Levy multicurve, or degenerate Levy multicurve, respectively) $\widetilde{\mathcal{G}} = \{\widetilde{\gamma}_1, \widetilde{\gamma}_2, \dots, \widetilde{\gamma}_r\}$ such that $\gamma_i$ and $\widetilde{\gamma}_i$ homotopic in $S^2 - B$ for each $i = 1, 2, \dots, r$;

        \item \label{it: extra} if $\mathcal{G}$ is a Levy cycle (or a degenerate Levy cycle) for $f \colon (S^2, A) \rto$, then $\mathcal{G}$ is also a Levy cycle (degenerate Levy cycle, respectively) for $f \colon (S^2, B) \rto$.
    \end{enumerate}
\end{proposition}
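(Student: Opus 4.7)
The plan is to prove the three parts in sequence, using the pullback dynamics on Teichm\"uller space as the main leverage. For part~(1), the ``only if'' direction is immediate: a realization of $f\colon(S^2, A)\rto$ by $g$ automatically realizes $f\colon(S^2, B)\rto$, since $\varphi(B) \cup P_g$ is a valid marked set for $g$ and an isotopy rel.\ $A$ is also an isotopy rel.\ $B$. For the ``if'' direction, together with the uniqueness of the fixed point in $t_{A, B}^{-1}(\tau)$, I would induct on $|C|$ where $C := A - B$. The key combinatorial observation is that the forward-orbit relation restricted to $C$ forms a DAG --- trivial marked points are never $f$-periodic --- so $C$ admits a ``source'' $c$ for which $A' := A - \{c\}$ remains $f$-pseudo-invariant, reducing the problem to the case $|A - A'| = 1$.

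In that reduced case, let $\tau' \in \T_{A'}$ be the unique fixed point of $\sigma_{f, A'}$ in $t_{A', B}^{-1}(\tau)$ from the inductive hypothesis, realized by $g \colon (\widehat{\C}, P) \rto$. By Proposition~\ref{prop: dependence} and Proposition~\ref{prop: fibers}, the fiber $\Delta := t_{A, A'}^{-1}(\tau')$ is a $\sigma_{f, A}$-invariant, properly and holomorphically embedded unit disk in $\T_A$. The crucial claim is that $\sigma_{f, A}|\Delta \colon \Delta \to \Delta$ is a \emph{constant} holomorphic map: after normalizing so that $\varphi|A'$ represents $\tau'$ via a fixed lift, the pullback $\psi$ satisfies $g \circ \psi = \varphi \circ f$, which forces $\psi(c)$ to lie in an at-most-countable subset of $\widehat{\C}$ (a fiber $g^{-1}(\varphi(f(c)))$ when $f(c) \in A'$, a $g$-preimage of a point in the essential-singularity set $E_g$, or $E_g$ itself, according to the orbit type of $c$). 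Composing $\sigma_{f, A}|\Delta$ with the natural covering map from $\Delta \cong \D$ to a thrice-or-more-punctured sphere then yields a holomorphic map with countable image, which by the open mapping theorem is constant; hence $\sigma_{f, A}|\Delta$ itself is constant, and a constant self-map of $\D$ has a unique fixed point, giving the realization of $f\colon(S^2, A)\rto$ and the uniqueness in $\Delta$ via Proposition~\ref{prop: fixed point of sigma}.

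For part~(2), I first isotope each $\gamma_i$ within its $S^2 - B$-class to a representative disjoint from the finite set $C$, giving a curve in $S^2 - A$ that is essential there by $B \subset A$. Corollary~\ref{corr: homotopy lifting for curves} provides preimages homotopic in $S^2 - B$ to $\gamma_{\sigma(i)}$, and the set of $S^2 - A$-classes within a fixed $S^2 - B$-class is a finite torsor parameterized by distributions of $C$-points across the two sides of each curve. The pullback induces an affine correspondence on this torsor, and one chooses the representatives $\widetilde{\gamma}_i$ so that the cycle closes up in $S^2 - A$ at the same period $r$; the degenerate version is handled in parallel at the disk level. For part~(3), the Levy-preimage condition in $S^2 - A$ automatically implies its coarser $S^2 - B$ counterpart, so the substantive content is essentialness in $S^2 - B$. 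Arguing by contradiction, if $\gamma_1 \in \mathcal{G}$ bounds $D_1$ with $|D_1 \cap B| \leq 1$, then essentialness in $S^2 - A$ forces $|D_1 \cap C| \geq 1$. In the degenerate case the homeomorphism $f|\widetilde{D}_i \colon \widetilde{D}_i \to D_i$, together with $\widetilde{D}_i \cap A = D_{\sigma(i)} \cap A$ and the $f$-pseudo-invariance of $A$, produces bijections $f \colon D_{\sigma(i)} \cap A \to D_i \cap A$; composing around the cycle, $f^{\circ r}$ permutes $D_1 \cap A$, forcing all its points to be $f$-periodic. Since trivial marked points are never periodic, $D_1 \cap C = \emptyset$, contradicting our assumption. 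The non-degenerate case requires a careful analogue isolating the finite forward-invariant subset of $A$-points trapped inside the union of the $D_i$'s, to which the same periodicity argument then applies.

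The most delicate step will be the combinatorial closure in part~(2): verifying that the affine correspondence on the finite torsor of winding data has a fixed point, not merely a periodic point, so that the produced Levy cycle for $f\colon(S^2, A)\rto$ has the same period $r$ as the original. The non-degenerate subcase of part~(3) is also non-trivial without the clean homeomorphism structure of the degenerate case, and will likely require a separate dynamical argument showing that any $C$-point of $D_1$ must generate an orbit confined to the Levy cycle's disks.
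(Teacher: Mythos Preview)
Your approach to part~(1) is correct, though somewhat more elaborate than needed. The paper uses a single observation: choose $m$ so that every point of $C$ lands in $B$ or hits an essential singularity within $m$ steps; then $A \subset \overline{f^{-m}(B)}$, and Proposition~\ref{prop: dependence} gives that $\sigma_{f,A}^{\circ m}([\varphi]_A)$ depends only on $[\varphi]_B$. Hence $\sigma_{f,A}^{\circ m}$ collapses the entire fiber $t_{A,B}^{-1}(\tau)$ to a single point, which is the required fixed point. Your inductive removal of one source point at a time, combined with the countable-image/open-mapping argument, recovers the same conclusion but is strictly more work; note that once you have $A \subset \overline{f^{-1}(A')}$, Proposition~\ref{prop: dependence} already gives constancy of $\sigma_{f,A}|\Delta$ directly, with no need for the open-mapping step.

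For parts~(2) and~(3) there is a genuine gap: you are missing the reduction that makes both parts nearly immediate. The paper first reduces (by an easy induction along the chain $B = B_0 \subset B_1 \subset \cdots \subset B_m = A$, where $B_k$ consists of points landing in $B$ or at an essential singularity within $k$ steps) to the case $m=1$, i.e., $A \subset \overline{f^{-1}(B)}$. Under this hypothesis, any simple closed curve $\widetilde{\gamma} \subset f^{-1}(\gamma)$ with $\gamma \subset S^2 - B$ automatically lies in $S^2 - \overline{f^{-1}(B)} \subset S^2 - A$. For~(2), one then simply \emph{takes the preimage curves $\widetilde{\gamma}_i$ themselves} as the new Levy cycle in $S^2 - A$: lifting the $S^2 - B$ homotopy $\widetilde{\gamma}_i \sim \gamma_{\sigma(i)}$ once more via Corollary~\ref{corr: homotopy lifting for curves} produces $\delta_i \subset f^{-1}(\widetilde{\gamma}_i)$ homotopic in $S^2 - \overline{f^{-1}(B)} \subset S^2 - A$ to $\widetilde{\gamma}_{\sigma(i)}$, closing the cycle with no torsor bookkeeping at all. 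Your ``affine correspondence on a finite torsor'' is neither defined nor shown to have a fixed point; I do not see how to make that route work, and the paper's argument avoids it entirely.

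Similarly, for~(3) under the $m=1$ reduction: if some $\gamma_j \in \mathcal{G}$ were non-essential in $S^2 - B$, then every connected component of $f^{-1}(\gamma_j)$ would be non-essential in $S^2 - \overline{f^{-1}(B)}$, hence non-essential in $S^2 - A$ since $A \subset \overline{f^{-1}(B)}$; but one such component is homotopic in $S^2 - A$ to the essential curve $\gamma_{\sigma(j)}$, a contradiction. No distinction between the degenerate and non-degenerate cases is needed, and the orbit-trapping argument you sketch for the non-degenerate case is unnecessary.
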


\begin{proof}
    Let $m \geq 1$ be an integer such that every point of $C$ lands either to a point in $B$ within at most $m$ iterations, or to an essential singularity of $f$ after at most $m - 1$ iterations. It is straightforward to see that $A \subset \overline{f^{-m}(B)}$.

    For item \eqref{it: realizability and trivial marked points}, we can assume that $|B| \geq 3$. If $|B| \leq 2$, this can be easily adjusted by moving points from $C$ to $B$ and applying item \eqref{it: three points} of  Proposition \ref{prop: small set}. 
    If the Thurston map $f \colon (S^2, A) \rto$ is realized, then naturally $f \colon (S^2, B) \rto$ is also realized. Now, assume that $f \colon (S^2, B) \rto$ is realized, and the pullback map $\sigma_{f, B}$ has a fixed point $\tau \in   \T_{B}$. According to Propositions \ref{prop: dependence} and \ref{prop: functoriality}, it follows that $[\psi]_{A} = \sigma_{f, A}^{\circ m}([\varphi]_{A}) = \sigma_{f^{\circ m}, A}([\varphi]_{A})$ depends only on $[\varphi]_{B}$ for any $[\varphi]_{A} \in \T_A$, and that $[\psi]_{B} = [\varphi]_{B}$ if $[\varphi]_{A} \in t_{A, B}^{-1}(\tau)$. Therefore, $\sigma_{f, A}^{\circ m}(t_{A, B}^{-1}(\tau))$ consists of a single point $\mu \in t_{A, B}^{-1}(\tau)$, making $\mu$ a unique fixed point of $\sigma_{f, A}$ within $t_{A, B}^{-1}(\tau)$. In particular, Proposition~\ref{prop: fixed point of sigma} implies that $f \colon (S^2, A) \rto$ is realized.

    For items \eqref{it: realizability and levy cycles} and \eqref{it: extra}, we assume that $m = 1$, i.e., for every $c \in C$, either $f(c) \in B$ or $c$ is an essential singularity of $f$. Equivalently, this means that $A \subset \overline{f^{-1}(B)}$. The general case can be obtained through induction on $m$, and we leave this to the reader.
    
    Suppose $f \colon (S^2, B) \rto$ has a Levy cycle $\mathcal{G} = \{\gamma_1, \gamma_2, \dots, \gamma_r\}$, and let $\sigma$ be the corresponding cyclic permutation of the set $\{1, 2, \dots, r\}$. Then, for each $i = 1, 2, \dots, r$, there exists a simple closed curve $\widetilde{\gamma}_i \subset f^{-1}(\gamma_i)$ such that $\widetilde{\gamma}_i$ is homotopic in $S^2 - B$ to $\gamma_{\sigma(i)}$ and $\deg(f|\widetilde{\gamma}_i \colon \widetilde{\gamma}_i \to \gamma_i) = 1$. Now, consider the collection of essential simple closed curves $\widetilde{\mathcal{G}} := \{\widetilde{\gamma}_1, \widetilde{\gamma}_2, \dots, \widetilde{\gamma}_r\}$ in $S^2 - A$. We aim to show that $\widetilde{\mathcal{G}}$ provides a Levy cycle for the Thurston map $f \colon (S^2, A) \rto$. 
    
    There exists a simple closed curve $\widetilde{\gamma}_{\sigma(i)} \subset f^{-1}(\gamma_{\sigma(i)})$ such that $\deg(f|\widetilde{\gamma}_{\sigma(i)}\colon \widetilde{\gamma}_{\sigma(i)} \to \gamma_{\sigma(i)}) =~1$ for each $ i = 1, 2, \dots, r$. Since $\widetilde{\gamma}_i$ and $\gamma_{\sigma(i)}$ are homotopic in $S^2 - B$, Corollary~\ref{corr: homotopy lifting for curves} guarantees the existence of a simple closed curve $\delta_i \subset f^{-1}(\widetilde{\gamma}_i)$ such that $\delta_i$ is homotopic in $S^2 - \overline{f^{-1}(B)} \subset S^2 - A$ to $\widetilde{\gamma}_{\sigma(i)}$ and $\deg(f|\delta_i \colon \delta_i \to \widetilde{\gamma}_i) = 1$. Thus, $\widetilde{\mathcal{G}}$ provides a Levy cycle for $f \colon (S^2, A) \rto$. Moreover, by the second part of Corollary~\ref{corr: homotopy lifting for curves}, if $\mathcal{G}$ is a degenerate Levy cycle for $f \colon (S^2, B) \rto$, then $\widetilde{\mathcal{G}}$ is also a degenerate Levy cycle for $f \colon (S^2, A) \rto$. Similarly, if $\mathcal{G}$ is a multicurve, then $\widetilde{\mathcal{G}}$ is a multicurve as well, establishing item \eqref{it: realizability and levy cycles}.

     Suppose that $\mathcal{G} = \{\gamma_1, \gamma_2, \dots, \gamma_r\}$ is a Levy cycle for the Thurston map $f \colon (S^2, A) \rto$. If $\mathcal{G}$ were not a Levy cycle for $f \colon (S^2, B) \rto$, it would mean that some curve $\gamma_j \in \mathcal{G}$ is non-essential in $S^2 - B$. That implies that if $\delta \subset f^{-1}(\gamma_j)$ is a simple closed curve, then $\delta$ is non-essential in $S^2 - A$. However, there exists an essential simple closed curve $\widetilde{\gamma}_j \subset f^{-1}(\gamma_j)$ in $S^2 - A$, which leads to a contradiction. Finally, it is clear that if $\mathcal{G}$ is a degenerate Levy cycle for the Thurston map $f \colon (S^2, A) \rto$, then it is also degenerate for the Thurston map~$f \colon (S^2, B) \rto$.
\end{proof}

\begin{remark}
    Suppose we are in the setting of Proposition \ref{prop: trivial marked points}. Let $m \geq 1$ be the minimal integer such that every point of $C$ lands either to a point in $B$ within at most $m$ iterations, or to an essential singularity of $f$ after at most $m - 1$ iterations. Based on the ideas from the proof of item \eqref{it: realizability and trivial marked points}, and Propositions \ref{prop: small set} and \ref{prop: dependence}, it can be shown that if $|B| = 3$, then $\sigma_{f, A}^{\circ m}$ is constant, although $\sigma_{f, A}^{\circ n}$ is never constant for $1 \leq n \leq m - 1$.
\end{remark}

\subsection{Single marked fixed point}\label{subsec: extra fixed point}

Let $f \colon (S^2, B) \rto$ be a Thurston map with $|B| \geq 3$. If $c \in S^2 - B$ is a fixed point of the map $f$, then we can analyze the Thurston map $f \colon (S^2, A) \rto$, where $A := B \cup \{c\}$, using the information about the original Thurston map $f \colon (S^2, B) \rto$.

In this section, we assume that $f \colon (S^2, B) \rto$ is realized. This allows us to gain a deeper understanding of the behavior of the pullback map $\sigma_{f, A}$, which is defined on the Teichm\"uller space~$\T_{A}$.

According to Proposition \ref{prop: fixed point of sigma}, there exists a fixed point $\mu = [\theta_1] = [\theta_2] \in \T_B$ of the pullback map $\sigma_{f, B}$. We assume that the representatives $\theta_1, \theta_2 \colon S^2 \to \widehat{\C}$ are chosen such that $\theta_1$ and $\theta_2$ are isotopic rel.\ $B$ (in particular, $\theta_1|B = \theta_2|B$) and $g := \theta_1 \circ f \circ \theta_2^{-1} \colon (\widehat{\C}, P) \rto$ is a postsingularly finite holomorphic map, where $P := \theta_1(B) = \theta_2(B)$.


Now, we are ready to introduce the following objects:
\begin{itemize}
    \item $\Delta := t_{A, B}^{-1}(\mu)$, which is a $\sigma_{f, A}$-invariant subset of $\T_A$ according to Proposition \ref{prop: dependence};
    \item $\pi \colon \Delta \to \widehat{\C} - P$, defined by $\pi([\varphi]) = \varphi(c)$, where the representative $\varphi \colon S^2 \to \widehat{\C}$ is chosen so that $\varphi|B = \theta_1|B = \theta_2|B$;
    \item $\omega \colon \Delta \to \widehat{\C} - P$, defined as $\omega := \pi \circ \sigma_{f, A}|\Delta$;
    \item $W := \widehat{\C} - \overline{g^{-1}(P)}$, which is a domain of $\widehat{\C}$. 
\end{itemize}

Naturally, the objects introduced above depend on the map $f$, the marked set $B$, the fixed point~$c$, as well as the choice of the representatives $\theta_1$ and $\theta_2$ for the fixed point $\mu \in \T_B$ (which might not be unique if $f$ is a $(2, 2, 2, 2)$-map; see Remark \ref{rem: remark on 2,2,2,2-maps}). However, for simplicity, we exclude these dependencies from the notation.

\begin{proposition}\label{prop: extra marked point}
\begin{figure}[h]
        \begin{tikzcd}
            \Delta \arrow[rr, "\sigma_{f, A}|\Delta"] \arrow[dd, "\pi"] \arrow[dr, "\omega"] & & \Delta \arrow [dd, "\pi"]\\
            & W \arrow[dl, "g|W"] \arrow[dr, hook] & \\
            \widehat{\C} - P  & & \widehat{\C} - P            \end{tikzcd} 
        \caption{Fundamental diagram.}\label{fig: fund diag}
        \end{figure}
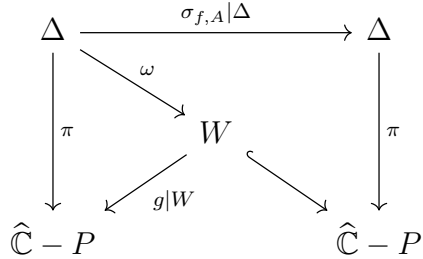
    The objects defined above satisfy the following properties:
    \begin{enumerate}
        \item \label{it: diag} diagram \eqref{fig: fund diag} commutes.  
        \item \label{it: disk} $\Delta$ is a properly and holomorphically embedded unit disk in $\T_A$;
        \item \label{it: coverings} $\pi \colon \Delta \to \widehat{\C} - P$, $g|W \colon W \to \widehat{\C} - P$, and $\omega\colon \Delta \to W$ are holomorphic covering maps;
        \item \label{it: non auto sigma map} $\sigma_{f, A}(\Delta) \subsetneq \Delta$ is open and dense in $\Delta$, and $\sigma_{f, A}|\Delta \colon \Delta \to \sigma_{f, A}(\Delta)$ is a holomorphic covering map.
    \end{enumerate}
\end{proposition}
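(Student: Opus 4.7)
The plan is to verify the four claims in order: (2) and the well-definedness of $\pi$ come first as setup, then (1) is the key topological computation, (3) is an unwinding of covering-theoretic facts, and (4) combines everything. Part (2) is immediate from Proposition \ref{prop: fibers}. For well-definedness of $\pi$, I would observe that for $\tau = [\varphi]_A \in \Delta$, the condition $t_{A,B}(\tau) = \mu$ allows one to choose a representative with $\varphi|B = \theta_1|B$ (the M\"obius transformation matching the normalization at the three or more points of $P$ must be trivial), and two such normalized representatives differ by an isotopy rel $A$ and hence agree at $c$.

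For part (1), my plan is to apply Corollary \ref{corr: isotopy lifting property} to the semi-conjugacy $\theta_1 \circ f = g \circ \theta_2$ together with the isotopy rel $B \supseteq S_f$ from $\theta_1$ to a chosen normalized representative $\varphi$ of $\tau$. This will produce $\psi$ isotopic to $\theta_2$ rel $\overline{f^{-1}(B)}$ with $\varphi \circ f = g \circ \psi$, so that $\sigma_{f,A}(\tau) = [\psi]_A$ and (since $B \subseteq \overline{f^{-1}(B)}$ forces $\psi|B = \theta_1|B$) the value $\omega(\tau) = \psi(c)$ is well-defined. The left triangle $\pi = g \circ \omega$ will then drop out of $\varphi(c) = \varphi(f(c)) = g(\psi(c))$ using $f(c) = c$; the right triangle is by definition. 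To see that the image of $\omega$ in fact lies in $W$, I would note that $c$ is a fixed point not in $B$ and not an essential singularity, so $c \notin \overline{f^{-1}(B)}$, and therefore $\psi(c) \notin \psi(\overline{f^{-1}(B)}) = \theta_2(\overline{f^{-1}(B)}) = \overline{g^{-1}(P)}$.

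For part (3), I would verify the three covering statements separately. The map $g|W \colon W \to \widehat{\C} - P$ is covered by the standard fact at the end of Section \ref{subsec: topologically holomorphic maps}, since $W = \widehat{\C} - \overline{g^{-1}(P)}$ and $S_g \subseteq P$. For $\pi \colon \Delta \to \widehat{\C} - P$, I would identify it as the holomorphic universal cover, either by citing the Bers Isomorphism underlying Proposition \ref{prop: fibers}, or by explicit coordinate inspection: $\pi_A|\Delta$ maps $\Delta$ onto $m_{A,B}^{-1}(\pi_B(\mu))$, which is biholomorphic to $\widehat{\C} - P$ by the description of $\M_A$ in Section \ref{subsec: teichmuller and moduli spaces}. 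The commutative relation $g|W \circ \omega = \pi$ established in part (1) then identifies $\omega$ itself as the universal cover of $W$, because $\Delta$ is simply connected and $g|W$ is a covering.

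The main work — and the expected obstacle — lies in part (4), specifically in identifying $\sigma_{f,A}(\Delta)$ with all of $\pi^{-1}(W)$ rather than merely one connected component. The inclusion $\sigma_{f,A}(\Delta) \subseteq \pi^{-1}(W)$ is forced by the diagram. My plan is to show that $\pi^{-1}(W)$ itself is open, dense, path-connected, and properly contained in $\Delta$, by writing $\Delta - \pi^{-1}(W) = \pi^{-1}(\overline{g^{-1}(P)} - P)$ and noting that this is closed and at most countable in $\Delta$: the set $\overline{g^{-1}(P)} - P$ is closed and at most countable in $\widehat{\C} - P$ by Lemma \ref{lemm: small sets}, and each $\pi$-fiber is countable because $\pi_1(\widehat{\C} - P)$ is free of finite rank. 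The same argument used in Lemma \ref{lemm: small sets} will then give that the complement of this closed at most countable set in $\Delta$ is open, dense, and path-connected. Once $\pi^{-1}(W)$ is known to be connected, the map $\sigma_{f,A}|\Delta \colon \Delta \to \pi^{-1}(W)$ becomes a lift of the universal cover $\omega$ through the intermediate covering $\pi|\pi^{-1}(W) \colon \pi^{-1}(W) \to W$, and since $\Delta$ is simply connected this forces $\sigma_{f,A}|\Delta$ to itself be the universal cover of $\pi^{-1}(W)$. This will deliver both $\sigma_{f,A}(\Delta) = \pi^{-1}(W)$ and the covering property simultaneously.
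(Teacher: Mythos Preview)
Your approach is correct and matches the paper's proof in structure and in almost all details. The one place where you and the paper diverge in emphasis is part~(4). You take extra care to establish that $\pi^{-1}(W)$ is path-connected (via the closed-at-most-countable-complement argument), which the paper leaves implicit when it invokes \cite[Section~1.3, Exercise~16]{hatcher} for the factorization $\omega = \pi \circ (\sigma_{f,A}|\Delta)$; your argument here is a genuine improvement in rigor. Conversely, you assert that $\pi^{-1}(W)$ is \emph{properly} contained in $\Delta$ without justifying that $\overline{g^{-1}(P)} - P$ is nonempty. The paper handles this explicitly: by Great Picard's Theorem in the transcendental case and the Riemann--Hurwitz count $|g^{-1}(P)| \geq d(|P|-2)+2 > |P|$ in the finite-degree case, the set $\widehat{\C} - W$ always contains a point outside $P$. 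You should add this one line.
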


\begin{proof}
    Let $\tau = [\varphi] \in \Delta$, where the representative $\varphi \colon S^2 \to \widehat{\C}$ is chosen such that $\varphi|B = \theta_1|B = \theta_2|B$. By Propositions \ref{prop: def of sigma map} and \ref{prop: dependence}, there exists an orientation-preserving homeomorphism $\psi \colon S^2 \to \widehat{\C}$ such that $g = \varphi \circ f \circ \psi^{-1}$ and $\psi|B = \theta_1|B = \theta_2|B$. Clearly, $g(\psi(c)) = \varphi(c)$. At the same time, $\pi(\tau) = \varphi(c)$, and $\omega(\tau) = \psi(c)$ because $\sigma_{f, A}(\tau) = [\psi]$. Thus, we have $\pi = \omega \circ g$. Using this, we conclude that diagram \eqref{fig: fund diag} commutes, establishing item \eqref{it: diag}.
    
    Item \eqref{it: disk} follows directly from Proposition \ref{prop: fibers}. The map $g|W \colon W \to \widehat{\C} - P$ is a covering because $S_g \subset P$. Similarly, the map $\pi \colon \Delta \to \widehat{\C} - P$ is a holomorphic covering map due to the discussion of Section \ref{subsec: teichmuller and moduli spaces}, and particularly, commutative diagram \eqref{fig: marked points forgetting}. Therefore, using the relation $\pi = \omega \circ g$ and \cite[Section 1.3, Excercise 16]{hatcher}, we deduce that $\omega \colon \Delta \to W$ is also a holomorphic covering map, thereby completing item \eqref{it: coverings}.
    
    Similarly, since $\omega = \pi \circ \sigma_{f, A}| \Delta$, the restriction $\sigma_{f, A}|\Delta \colon \Delta \to \sigma_{f, A}(\Delta)$ is a covering map, which is holomorphic because $\Delta$ is a complex submanifold of $\T_A$ and $\sigma_{f, A}$ is holomorphic on $\T_A$ by item~\eqref{it: hol} of Proposition \ref{prop: propeties of pullback map}. Note that by Great Picard's Theorem and the Riemann-Hurwitz formula \cite[Appendix A.3]{Hubbard_Book_1}, the set $\widehat{\C} - W$ contains at least one point different from the points of the set $P$. Therefore, $\sigma_{f, A}(\Delta) = \pi^{-1}(W)$ is different from $\Delta$. Since $W$ is open and dense in $\widehat{\C}$, it follows that $\sigma_{f, A}(\Delta)$ is open and dense in $\Delta$, establishing item~\eqref{it: non auto sigma map}.
\end{proof}

\subsection{Fixed points of pullback maps}\label{subsec: realized thurston maps with extra marked points}

The following proposition establishes the relationship between the fixed points of the pullback maps $\sigma_{f, A}$ and $\sigma_{f, B}$, where $f \colon (S^2, A) \rto$ is a Thurston map and $B \subset A$ is an $f$-pseudo-invariant subset containing $S_f$.

\begin{proposition}\label{prop: fixed points in fibers}
    Let $f \colon (S^2, A) \rto$ be a realized Thurston map, and let $B \subset A$ be an $f$-pseudo-invariant set with $|B| \geq 3$ and $S_f \subset B$. If $\mu \in \T_B$ is a fixed point of the pullback map $\sigma_{f, B}$, then the pullback map $\sigma_{f, A}$ has a unique fixed point in $t_{A, B}^{-1}(\mu)$. Furthermore, every fixed point of $\sigma_{f, A}$ belongs to the $t_{A, B}$-fiber of a fixed point of $\sigma_{f, B}$.
\end{proposition}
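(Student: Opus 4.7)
The ``furthermore'' statement follows immediately from the commutation $t_{A,B} \circ \sigma_{f,A} = \sigma_{f,B} \circ t_{A,B}$ of Proposition~\ref{prop: dependence}: for any $\tau$ with $\sigma_{f,A}(\tau) = \tau$ we have $\sigma_{f,B}(t_{A,B}(\tau)) = t_{A,B}(\sigma_{f,A}(\tau)) = t_{A,B}(\tau)$. The substantive content is therefore the first assertion. Setting $\Delta := t_{A,B}^{-1}(\mu)$, the same commutation shows that $\Delta$ is $\sigma_{f,A}$-invariant, so $\sigma_{f,A}$ restricts to a holomorphic self-map of $\Delta$. My plan is to reduce the existence and uniqueness of a fixed point of $\sigma_{f,A}|\Delta$ inside $\Delta$ to the one-added-fixed-point setting of Proposition~\ref{prop: extra marked point}, where the fiber is a disk and Schwarz--Pick applies.

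The reduction enlarges $B$ toward $A$ one step at a time. Decompose $A - B = C_t \sqcup C_p$ into trivial marked points and non-trivial periodic marked points; any non-trivial point of $A - B$ is necessarily periodic, since $B$ is $f$-pseudo-invariant and contains $P_f$. Proposition~\ref{prop: trivial marked points}\eqref{it: realizability and trivial marked points} absorbs $C_t$ into $B$, uniquely lifting $\mu$ to a fixed point of $\sigma_{f, B \cup C_t}$, and reduces the problem to the case where $A - B$ is purely periodic. Partition this remainder into $f$-cycles and adjoin them one cycle at a time: inside an $f$-cycle of period $p$, every point becomes a fixed point of $f^{\circ p}$, so by Proposition~\ref{prop: functoriality} (which gives $\sigma_{f^{\circ p}, X} = \sigma_{f,X}^{\circ p}$ for the relevant intermediate marked sets $X$) we may adjoin the points of the cycle one at a time in the setting of Proposition~\ref{prop: extra marked point}. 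Any fixed point of $\sigma_{f,A}$ is automatically a fixed point of $\sigma_{f^{\circ p}, A} = \sigma_{f,A}^{\circ p}$, so uniqueness established at the level of $f^{\circ p}$ transfers back to $f$.

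In the base case $A = B \cup \{c\}$ with $c$ a fixed point of $f$, Proposition~\ref{prop: extra marked point} identifies $\Delta$ with a properly holomorphically embedded unit disk in $\T_A$, and part~\eqref{it: non auto sigma map} shows that $\sigma_{f,A}|\Delta$ maps $\Delta$ onto a proper open dense subset of $\Delta$. Hence $\sigma_{f,A}|\Delta$ is a holomorphic self-map of $\D$ that is not a conformal automorphism, and the Schwarz--Pick lemma forces it to be strictly distance-decreasing in the hyperbolic metric; this gives uniqueness of the fixed point inside $\Delta$. For existence, the realization hypothesis for $f \colon (S^2, A) \rto$ supplies a fixed point $\tau_0 \in \T_A$ of $\sigma_{f,A}$, and by the ``furthermore'' part $t_{A,B}(\tau_0)$ is some fixed point of $\sigma_{f,B}$. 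When $f$ is not a $(2,2,2,2)$-map, item~\eqref{it: not 2,2,2,2 case} of Proposition~\ref{prop: propeties of pullback map} forces $t_{A,B}(\tau_0) = \mu$, placing $\tau_0$ in $\Delta$. The main obstacle is the $(2,2,2,2)$ case, where $\sigma_{f,B}$ may admit a continuum of fixed points and $t_{A,B}(\tau_0)$ need not equal the prescribed $\mu$; here I would exploit the covering diagram~\eqref{fig: fund diag} of Proposition~\ref{prop: extra marked point} to translate fixed points of $\sigma_{f,A}|\Delta$ into fixed points of $g|W$ in $W$ for the psf holomorphic realization $g$ of $f \colon (S^2, B) \rto$ associated to $\mu$, and combine the Denjoy--Wolff theorem applied to the non-automorphism $\sigma_{f,A}|\Delta$ with a continuity/openness argument to show that the fixed-point set of $\sigma_{f,A}$ in $\T_A$ forms a holomorphic section of $t_{A,B}$ over the fixed-point set of $\sigma_{f,B}$.
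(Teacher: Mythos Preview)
Your overall architecture matches the paper's: reduce via trivial points, pass to an iterate so that the remaining extra marked points become fixed, then add them one at a time using the disk fiber of Proposition~\ref{prop: extra marked point}. The paper takes a single $m$ making every periodic point of $C$ fixed rather than handling cycles one-by-one with varying periods, but your variant would also work.

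There is, however, a genuine gap in your existence argument for the $(2,2,2,2)$ case. You correctly observe that the realized fixed point $\tau_0$ of $\sigma_{f,A}$ need not lie in the prescribed fiber $\Delta = t_{A,B}^{-1}(\mu)$, and you propose to rescue existence by a ``holomorphic section'' argument combined with Denjoy--Wolff. But Denjoy--Wolff for a non-automorphism of the disk gives the dichotomy \emph{either} a unique interior fixed point \emph{or} all orbits converge to a boundary point; you have supplied nothing that rules out the boundary alternative. The section idea is not fleshed out, and making it rigorous (say via the implicit function theorem plus a connectedness argument over the fixed locus of $\sigma_{f,B}$, which may be all of $\T_B$ when $\sigma_{f,P_f}=\id$) would be considerably more work than what is needed.

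The paper's resolution is much simpler and is the step you are missing: even though $\tau_0$ may lie in a different fiber, it is still a fixed point of $\sigma_{f,A}$ in $\T_A$, and $\sigma_{f,A}$ is $1$-Lipschitz for $d_T$. Hence for any $\tau\in\Delta$ the $\sigma_{f,A}$-orbit stays in the closed $d_T$-ball of radius $d_T(\tau,\tau_0)$ about $\tau_0$, so it is pre-compact in $\T_A$; since $\Delta$ is \emph{properly} embedded, the orbit is pre-compact in $\Delta$ as well. This immediately kills the boundary alternative in Denjoy--Wolff and yields the required fixed point in $\Delta$.

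A smaller omission: you note that uniqueness for $\sigma_{f^{\circ p},A}$ implies uniqueness for $\sigma_{f,A}$, but you do not explain why the unique fixed point $\tau$ of $\sigma_{f^{\circ p},A}$ in $t_{A,B}^{-1}(\mu)$ is actually fixed by $\sigma_{f,A}$. The paper's trick is that $t_{A,B}^{-1}(\mu)$ is $\sigma_{f,A}$-invariant (Proposition~\ref{prop: dependence}), so $\sigma_{f,A}(\tau)$ is another fixed point of $\sigma_{f^{\circ p},A}=\sigma_{f,A}^{\circ p}$ in the same fiber, hence equals $\tau$ by uniqueness.
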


\begin{proof}
    First of all, if $\tau$ is a fixed point of $\sigma_{f, A}$, then by Proposition \ref{prop: dependence}, $t_{A, B}(\tau)$ is a fixed point of $\sigma_{f, B}$. Next, we will show that the pullback map  $\sigma_{f, A}$ has a unique fixed point in $t_{A, B}^{-1}(\mu)$, where $\mu$ is a fixed point of the pullback map $\sigma_{f, B}$.
    
    The answer is straightforward for Thurston maps that are not $(2, 2, 2, 2)$-maps due to item~\eqref{it: not 2,2,2,2 case} of Proposition \ref{prop: propeties of pullback map}, which implies that the associated pullback maps can have at most one fixed point. However, the case of $(2, 2, 2, 2)$-maps is non-trivial.

    First, we demonstrate that trivial marked points of the set $C := A - B$ can be ignored.

    \begin{claim1}
        Let $C = C_1 \sqcup C_2$, where $C_2$ consists of trivial marked points of the Thurston map $f\colon (S^2, A) \rto$. If the pullback map $\sigma_{f, B \cup C_1}$ has a unique fixed point in $t^{-1}_{B \cup C_1, B}(\mu)$, then the pullback map $\sigma_{f, A}$ also has a unique fixed point in $t^{-1}_{A, B}(\mu)$.
    \end{claim1}

    \begin{subproof}
        Let $\tau'$ be the unique fixed point of $\sigma_{f, B \cup C_1}$ such that $\tau' \in t^{-1}_{B \cup C_1, B}(\mu)$. According to item \eqref{it: realizability and trivial marked points} of Proposition \ref{prop: trivial marked points}, there exists a unique fixed point of $\sigma_{f, A}$ within $t_{A, B \cup C_1}^{-1}(\tau')$.

        Now, suppose that $\tau$ is any fixed point of $\sigma_{f, A}$ in $t^{-1}_{A, B}(\mu)$. By Proposition \ref{prop: dependence}, $t_{A, B \cup C_1}(\tau)$ is a fixed point of $\sigma_{f, B \cup C_1}$. Since $t_{A, B} = t_{B \cup C_1, C_1} \circ t_{A, B \cup C_1}$, it follows that $t_{A, B \cup C_1}(\tau) \in t_{B \cup C_1, B}^{-1}(\mu)$, and therefore, $t_{A, B \cup C_1}(\tau) = \tau'$. Thus, any fixed point of $\sigma_{f, A}$ in $t^{-1}_{A, B}(\mu)$ must belong to $t_{A, B \cup C_1}^{-1}(\tau')$, and the rest follows.
    \end{subproof}

    Next, we address the case when all points of the set $C = A - B = \{c_1, c_2, \dots, c_l\}$ are fixed under the map $f$.

    \begin{claim1}
        Suppose that every point $c \in C$ is fixed point of $f$. Then the pullback map $\sigma_{f, A}$ has a unique fixed point in $t_{A, B}^{-1}(\mu)$.
    \end{claim1}

    \begin{subproof}
        Define $B_i = B \cup \{c_1, c_2, \dots, c_i\}$ for each $i = 0, 1, \dots, l$. In particular, $B_0 = B$ and $B_l = A$. We prove by induction on $i$ with $0 \leq i \leq l$ that $\sigma_{f, B_i}$ has a unique fixed point in $t_{B_i, B}^{-1}(\mu)$. The base case is obvious, since $t_{B_0, B}^{-1}(\mu) = \{\mu\}$ and $\mu$ is a fixed point of $\sigma_{f, B_0}$.

        Now suppose that for some $j$ with $0 \leq j < l$, the pullback map $\sigma_{f, B_j}$ has a unique fixed point $\tau_j \in t_{B_j, B}^{-1}(\mu)$. Define $\Delta := t_{B_{j + 1}, B_j}^{-1}(\tau_j)$. According to Proposition \ref{prop: fibers}, $\Delta$ is a properly and holomorphically embedded unit disk in the Teichm\"uller space $\T_{B_{j + 1}}$. Furthermore, by Proposition \ref{prop: dependence}, $\Delta$ is $\sigma_{f, B_{j + 1}}$-invariant, and according to item \eqref{it: non auto sigma map} of Proposition \ref{prop: extra marked point}, the restriction $\sigma_{f, B_{j + 1}}|\Delta \colon \Delta \to \Delta$ is holomorphic but not an automorphism of $\Delta$. 
        
        However, the $\sigma_{f, B_{j + 1}}$-orbit of any point in $\Delta$ forms a pre-compact set within $\T_{B_{j + 1}}$, and thus within $\Delta$ as well. This follows easily, for example, from the facts that $\sigma_{f, B_{j + 1}}$ has a fixed point in $\T_{B_{j + 1}}$ due to Proposition \ref{prop: fixed point of sigma}, and that $\sigma_{f, B_{j + 1}}$ is 1-Lipschitz on $\T_{B_{j + 1}}$, as noted in item \eqref{it: general} of Proposition \ref{prop: propeties of pullback map}. As a result, by the Denjoy-Wolff theorem \cite[Theorem~3.2.1]{Abate}, the pullback map $\sigma_{f, B_{j + 1}}$ has a unique fixed point in $\Delta$.

        Now, suppose that $\tau$ is any fixed point of the pullback map $\sigma_{f, B_{j + 1}}$ in $t^{-1}_{B_{j + 1}, B}(\mu)$. By Proposition \ref{prop: dependence}, it is straightforward to see that $t_{B_{j + 1}, B_j}(\tau)$ is a fixed point of $\sigma_{f, B_j}$. Moreover, since $t_{B_{j + 1}, B} = t_{B_j, B} \circ t_{B_{j + 1}, B_{j}}$, we have $t_{B_{j + 1}, B_j}(\tau) \in t_{B_j, B}^{-1}(\mu)$. As $\tau_j$ is unique fixed point of $\sigma_{f, B_j}$ in $t_{B_j, B}^{-1}(\mu)$, it follows $t_{B_{j + 1}, B_j}(\tau) = \tau_j$, or equivalently, $\tau \in t_{B_{j + 1}, B_j}^{-1}(\mu) = \Delta$. Since we have already shown that $\sigma_{f, B_{j + 1}}$ has a unique fixed point in $\Delta$, the uniqueness of the fixed point of $\sigma_{f, B_{j + 1}}$ within $t_{B_{j + 1}, B}^{-1}(\mu)$ follows.
    \end{subproof}

    Now, consider the general case. By Claim 1, we may assume that the set $C$ does not contain any trivial marked points of the Thurston map $f \colon (S^2, A) \rto$. Therefore, every point $c \in C$ is periodic under $f$, and we can choose $m \geq 1$ such that every $c \in C$ is fixed under $f^{\circ m}$. Applying Claim 2 to the Thurston map $f^{\circ m} \colon (S^2, A) \rto$, we find that there is a unique fixed point $\tau$ of $\sigma_{f^{\circ m}, A}$ in $t_{A, B}^{-1}(\mu)$. By Proposition \ref{prop: functoriality}, we know that $\sigma_{f^{\circ m}, A} = \sigma_{f, A}^{\circ m}$, and in particular, $\tau$ is a periodic point of $\sigma_{f, A}$.
    
    If $\tau$ is a fixed point of $\sigma_{f, A}$, we are done, because any other fixed point of $\sigma_{f, A}$ in $t_{A, B}^{-1}(\mu)$ would lead to a fixed point of $\sigma_{f^{\circ m}, A} = (\sigma_{f, A})^{\circ m}$, contradicting Claim 2. Now, suppose that $\tau$ is not a fixed point of $\sigma_{f, A}$. Then $\tau$ and $\sigma_{f, A}(\tau)$ would be distinct fixed points of $\sigma_{f^{\circ m}, A}$. However, both $\tau$ and $\sigma_{f, A}(\tau)$ lie in $t_{A, B}^{-1}(\mu)$, as $t_{A, B}^{-1}(\mu)$ is $\sigma_{f, A}$-invariant by Proposition \ref{prop: dependence}. This leads to a contradiction with Claim 2.
\end{proof}

\begin{corollary}\label{corr: corr from intro}
    Let $f \colon (S^2, A) \rto$ be a realized Thurston map. Suppose that $B \subset A$ is an $f$-pseudo-invariant set containing $S_f$, and there exist two orientation-preserving homeomorphisms $\varphi, \psi \colon S^2 \to \widehat{\C}$ such that $\varphi$ and $\psi$ are isotopic rel.\ $B$ and $g = \varphi \circ f \circ  \psi^{-1}\colon (\widehat{\C}, \varphi(B))\rto$ is a holomorphic postsingularly finite map. Then, there exist two orientation-preserving homeomorphisms $\widetilde{\varphi}, \widetilde{\psi} \colon S^2 \to \widehat{\C}$ such that 
    \begin{enumerate}
        \item\label{it: 1} $\widetilde{\varphi}$ and $\widetilde{\psi}$ are isotopic rel.\ $A$;
        \item\label{it: 2} $\varphi$ and $\widetilde{\varphi}$ are isotopic rel.\ $B$, and $\psi$ and $\widetilde{\psi}$ are isotopic rel.\ $B$;
        \item\label{it: 3} $g = \widetilde{\varphi} \circ f \circ \widetilde{\psi}^{-1} \colon (\widehat{\C}, \widetilde{\varphi}(A)) \rto$.
    \end{enumerate}
    Moreover, if $|B| \geq 3$, the orientation-preserving homeomorphisms $\widetilde{\varphi}$ and $\widetilde{\psi}$ that satisfy properties \eqref{it: 1}-\eqref{it: 3} are unique up isotopy rel.\ $A$.
\end{corollary}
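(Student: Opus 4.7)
The plan is to combine Proposition~\ref{prop: fixed points in fibers} with the isotopy lifting of Corollary~\ref{corr: isotopy lifting property}, and then exploit $|B|\geq 3$ to kill the residual M\"obius ambiguity. The hypothesis shows that $\mu:=[\varphi]_B=[\psi]_B\in\T_B$ is a fixed point of $\sigma_{f,B}$, and since $f\colon(S^2,A)\rto$ is realized, Proposition~\ref{prop: fixed points in fibers} supplies a unique fixed point $\tau\in t_{A,B}^{-1}(\mu)$ of $\sigma_{f,A}$. For existence, I would pick any representative $\widehat{\varphi}$ of $\tau$; because $t_{A,B}(\tau)=\mu=[\varphi]_B$, there is a M\"obius transformation $N$ with $\widehat{\varphi}$ isotopic rel.\ $B$ to $N\circ\varphi$, and replacing $\widehat{\varphi}$ by $N^{-1}\circ\widehat{\varphi}$ (still a representative of $\tau$) yields $\widetilde{\varphi}$ isotopic rel.\ $B$ to $\varphi$.

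Next, I would apply Corollary~\ref{corr: isotopy lifting property} to the equality $\varphi\circ f=g\circ\psi$ and the isotopy from $\varphi$ to $\widetilde{\varphi}$ rel.\ $B\supset S_f$, producing $\widetilde{\psi}$ isotopic to $\psi$ rel.\ $\overline{f^{-1}(B)}$ with $\widetilde{\varphi}\circ f=g\circ\widetilde{\psi}$. Since $B$ is $f$-pseudo-invariant we have $B\subset\overline{f^{-1}(B)}$, so $\widetilde{\psi}$ is in particular isotopic to $\psi$ rel.\ $B$; together with $\widetilde{\varphi}\simeq_B\varphi$ this establishes property~\eqref{it: 2}, and the equation itself is property~\eqref{it: 3}.

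For property~\eqref{it: 1}, the relation $g=\widetilde{\varphi}\circ f\circ\widetilde{\psi}^{-1}$ combined with the definition of the pullback map yields $[\widetilde{\psi}]_A=\sigma_{f,A}([\widetilde{\varphi}]_A)=\sigma_{f,A}(\tau)=\tau=[\widetilde{\varphi}]_A$, so there is a M\"obius $M$ with $\widetilde{\psi}$ isotopic rel.\ $A$ to $M\circ\widetilde{\varphi}$. Combining this with the chain of isotopies rel.\ $B$ from $\widetilde{\psi}$ to $\psi$ to $\varphi$ to $\widetilde{\varphi}$, we conclude that $M\circ\widetilde{\varphi}$ is isotopic rel.\ $B$ to $\widetilde{\varphi}$; hence $M$ fixes the (at least three-point) set $\widetilde{\varphi}(B)$ pointwise, so $M=\id$ and $\widetilde{\psi}$ is isotopic rel.\ $A$ to $\widetilde{\varphi}$.

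For uniqueness under $|B|\geq 3$: if $(\widetilde{\varphi}_i,\widetilde{\psi}_i)$, $i=1,2$, are two solutions, then properties~\eqref{it: 1} and~\eqref{it: 3} make each $[\widetilde{\varphi}_i]_A=[\widetilde{\psi}_i]_A$ a fixed point of $\sigma_{f,A}$, while~\eqref{it: 2} places it in $t_{A,B}^{-1}(\mu)$, so Proposition~\ref{prop: fixed points in fibers} forces both to equal $\tau$; applying the M\"obius-fixing argument of the previous paragraph first to the pair $\widetilde{\varphi}_1,\widetilde{\varphi}_2$ (both isotopic rel.\ $B$ to $\varphi$) and then to the pair $\widetilde{\psi}_1,\widetilde{\psi}_2$ (both isotopic rel.\ $B$ to $\psi$) produces the required isotopies rel.\ $A$. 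The main obstacle throughout is the M\"obius ambiguity built into Proposition~\ref{prop: def of sigma map}: without the exact equation $g=\widetilde{\varphi}\circ f\circ\widetilde{\psi}^{-1}$ (rather than a mere M\"obius conjugate of $g$) and without the three-point rigidity coming from $|B|\geq 3$, the final alignment between $\widetilde{\varphi}$ and $\widetilde{\psi}$ would fail to close up.
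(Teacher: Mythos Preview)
Your argument is essentially the paper's intended proof, fleshed out in detail; the paper records it as a one-line consequence of Proposition~\ref{prop: fixed points in fibers}, Proposition~\ref{prop: dependence}, and \cite[Proposition~2.3]{farb_margalit}, and your use of Corollary~\ref{corr: isotopy lifting property} in place of Proposition~\ref{prop: dependence} amounts to the same mechanism.

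There is, however, one genuine gap. Your existence argument tacitly assumes $|B|\geq 3$ in two places: when you invoke Proposition~\ref{prop: fixed points in fibers} (which has $|B|\geq 3$ in its hypothesis), and when you conclude $M=\id$ from $M$ fixing $\widetilde{\varphi}(B)$ pointwise. But the existence clause of the corollary carries no such restriction, and $|B|=2$ does occur (e.g.\ $B=P_f$ for $f$ combinatorially equivalent to $z^{\pm d}$, cf.\ Proposition~\ref{prop: small set}). This is precisely why the paper's proof cites \cite[Proposition~2.3]{farb_margalit}: when $|B|=2$ one can no longer use the fiber structure of $\T_B$, and instead one argues directly that any two orientation-preserving homeomorphisms agreeing on a set of at most three points are isotopic relative to that set, which trivializes the rel.~$B$ conditions and lets you choose $\widetilde{\varphi},\widetilde{\psi}$ from any fixed point of $\sigma_{f,A}$ (or, if $|A|\leq 3$ as well, handle everything by the same fact). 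You should add a sentence disposing of the case $|B|=2$ along these lines.
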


\begin{proof}
    This result is follows directly from Proposition \ref{prop: fixed points in fibers}, along with Proposition \ref{prop: dependence} and \cite[Proposition~2.3]{farb_margalit}.
\end{proof}

\subsection{Passing to an iterate}\label{subsec: passing to an iterate}

The following proposition establishes a connection between certain properties of a marked Thurston map and of those of its iterates:

\begin{proposition}\label{prop: passing to iterate}
    Let $f \colon (S^2, A) \rto$ be a Thurston map and $m \geq 1$. Then:
    \begin{enumerate}
        \item \label{it: realizability and iterate} $f \colon (S^2, A) \rto$ is realized if and only if $f^{\circ m} \colon (S^2, A) \rto$ is realized;

        \item \label{it: descending from iterate} if $f^{\circ m} \colon (S^2, A) \rto$ has a Levy cycle (or a degenerate Levy cycle) $\mathcal{G}$ of period $r$, then $f \colon (S^2, A) \rto$ has a Levy cycle (or a degenerate Levy cycle, respectively) $\mathcal{F}$ of period~$mr$ such that $\mathcal{G} \subset \mathcal{F}$;
        
        \item \label{it: descedning from iterate for multicurves} $f^{\circ m} \colon (S^2, A) \rto$ has a Levy cycle (or a degenerate Levy cycle) $\mathcal{G}$ and there exist $\gamma \in \mathcal{G}$ and $\tau \in \T_A$ such that $l_{\gamma}(\tau) < \ell^* e^{-(mr - 1)d_0}$, where $d_0 := d_T(\tau, \sigma_{f, A}(\tau))$, then $f \colon (S^2, A) \rto$ has a Levy multicurve (or a degenerate Levy multicurve, respectively)~$\mathcal{F}$ of period at most $mr$;

        \item \label{it: passing to iterate} if $f \colon (S^2, A) \rto$ has a Levy cycle (or a degenerate Levy cycle, Levy multicurve, or degenerate Levy multicurve) $\mathcal{G}$ of period $r$, then $f^{\circ m} \colon (S^2, A) \rto$ has a Levy cycle (or degenerate Levy cycle, Levy multicurve, or degenerate Levy multicurve, respectively) $\mathcal{F}$ of period $r / \mathrm{gcd}(m, r)$ such that~$\mathcal{F} \subset \mathcal{G}$.
    \end{enumerate}
\end{proposition}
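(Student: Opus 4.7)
The plan is to apply Proposition~\ref{prop: functoriality} (giving $\sigma_{f^{\circ m}, A} = \sigma_{f, A}^{\circ m}$) together with Proposition~\ref{prop: fixed point of sigma} to translate realizability into the existence of a fixed point; I expect the $(2,2,2,2)$ subcase to be the main obstacle, since there the distance-decreasing iterate supplied by Proposition~\ref{prop: propeties of pullback map}(\ref{it: not 2,2,2,2 case}) is no longer available. The forward direction is immediate: iterate a realization of $f$. For the reverse, I have a fixed point $\tau$ of $\sigma_{f, A}^{\circ m}$ and must produce a fixed point of $\sigma_{f, A}$. When $f$ is not a $(2,2,2,2)$-map, Proposition~\ref{prop: propeties of pullback map}(\ref{it: not 2,2,2,2 case}) provides $k$ with $\sigma_{f, A}^{\circ k}$ distance-decreasing; combined with the $1$-Lipschitz property from item~(\ref{it: general}) the same holds for $\sigma_{f, A}^{\circ n}$ for every $n \geq k$, so choosing $n$ a common multiple of $k$ and $m$ makes both $\tau$ and $\sigma_{f, A}(\tau)$ into fixed points of the distance-decreasing $\sigma_{f, A}^{\circ n}$, forcing them to coincide.

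In the $(2,2,2,2)$ case I first strip trivial marked points via Proposition~\ref{prop: trivial marked points}(\ref{it: realizability and trivial marked points}), so that $A - P_f$ consists only of periodic points of $f$. The base case $A = P_f$ invokes Proposition~\ref{prop: propeties of pullback map}(\ref{it: 2,2,2,2 case}): $\sigma_{f, P_f}$ is an isometry of $\T_{P_f} \cong \D$ whose $m$-th iterate has an interior fixed point, and such an isometry is necessarily elliptic or the identity, hence has a fixed point in $\D$. For $A \supsetneq P_f$ I extend this fixed point up a tower of $f$-pseudo-invariant marked sets following the induction of Claim~2 in Proposition~\ref{prop: fixed points in fibers}: Proposition~\ref{prop: fibers} supplies $1$-dimensional invariant disks, Proposition~\ref{prop: extra marked point}(\ref{it: non auto sigma map}) excludes automorphism on each disk, and Denjoy-Wolff then gives a fixed point, provided orbits are pre-compact. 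Crucially, the pre-compactness that the original Claim~2 drew from the assumed global realization of $f \colon (S^2, A) \rto$ is here replaced by pre-compactness coming from the hypothesis: the fixed point $\tau$ of $\sigma_{f, A}^{\circ m}$ has a finite $\sigma_{f, A}$-orbit, and the $1$-Lipschitz property then bounds the $\sigma_{f, \cdot}$-orbits of arbitrary starting points through the projections $t_{A, \cdot}$. Cycles of length $q \geq 2$ in $A - P_f$ obstruct the naive one-point-at-a-time extension because intermediate sets lose pseudo-invariance; I handle them by running the induction instead for $f^{\circ q}$, for which the cycle points are now fixed and the intermediate sets are $f^{\circ q}$-pseudo-invariant, and by invoking a higher-dimensional Denjoy-Wolff-type fixed-point argument on the $q$-dimensional invariant fiber, whose input is again the pre-compactness supplied by the fixed point of $\sigma_{f, A}^{\circ m}$.

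\textbf{Part~(2).} For each $i$ let $\widetilde{\gamma}_i \in (f^{\circ m})^{-1}(\gamma_i)$ be the Levy-cycle witness satisfying $\widetilde{\gamma}_i \sim \gamma_{\sigma(i)}$ and degree $1$, and define intermediate curves $\eta_{i, k} := f^{\circ k}(\widetilde{\gamma}_i)$ for $0 \leq k \leq m - 1$. Each $\eta_{i, k}$ is a simple closed curve, since the overall degree~$1$ forces every factor of $f$ to act injectively on the curve; and each is essential in $S^2 - A$, since pushing forward by $f$ preserves essentiality (contrapositive of the pull-back statement recalled before Definition~\ref{def: levy cycle}). The collection $\mathcal{F} := \{\gamma_i : 1 \leq i \leq r\} \cup \{\eta_{i, k} : 1 \leq i \leq r,\ 1 \leq k \leq m - 1\}$ has $mr$ elements; reading it in $f$-forward order and using the closing homotopy $\eta_{i, 0} = \widetilde{\gamma}_i \sim \gamma_{\sigma(i)}$ to connect the end of the $i$-th block with the start of the $\sigma(i)$-th block, the Levy-cycle condition for $f$ is verified at every transition (either $f(\eta_{i, k}) = \eta_{i, k+1}$ holds exactly, or the closing homotopy applies), and the degrees, being products of $1$s, are all $1$. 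In the degenerate case, the disks $f^{\circ k}(\widetilde{D}_i)$ supply the intermediate-disk data and their homotopy classes are controlled by Corollary~\ref{corr: homotopy lifting for curves}.

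\textbf{Parts~(3) and~(4).} For~(3), I apply Proposition~\ref{prop: levy cycles to levy multicurves} to the $mr$-periodic Levy cycle $\mathcal{F}$ constructed in part~(2) after extracting a minimal sub-Levy-cycle $\mathcal{F}' \subseteq \mathcal{F}$ containing the short curve $\gamma \in \mathcal{G} \subseteq \mathcal{F}$; its period $r'$ satisfies $r' \leq mr$, so the length hypothesis $l_\gamma(\tau) < \ell^* e^{-(mr - 1) d_0} \leq \ell^* e^{-(r' - 1) d_0}$ meets the bound required by Proposition~\ref{prop: levy cycles to levy multicurves}, producing a (degenerate if $\mathcal{G}$ is) Levy multicurve of period $r' \leq mr$. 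For~(4), the $m$-th power $\sigma^m$ of the $r$-cycle $\sigma$ decomposes $\{1, \ldots, r\}$ into $\gcd(m, r)$ orbits of length $r/\gcd(m, r)$; I pick one such orbit $O$ and set $\mathcal{F} := \{\gamma_i : i \in O\} \subseteq \mathcal{G}$. Iterating the $f$-Levy-cycle condition $m$ times, at each step using Corollary~\ref{corr: homotopy lifting for curves} to lift a homotopy of the form $\widetilde{\gamma}_j \sim \gamma_{\sigma(j)}$ through the next $f$-preimage, produces for each $i \in O$ a preimage of $\gamma_i$ under $f^{\circ m}$ homotopic to $\gamma_{\sigma^m(i)}$ with degree $1$; this is precisely the $f^{\circ m}$-Levy condition with permutation $\sigma^m|_O$. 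The multicurve and degeneracy properties are inherited by $\mathcal{F} \subseteq \mathcal{G}$.
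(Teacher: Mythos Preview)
Your treatment of Parts~(2)--(4) and of the non-$(2,2,2,2)$ case of Part~(1) is essentially the same as the paper's.

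For the $(2,2,2,2)$ case of Part~(1), your approach has a genuine gap and is more involved than necessary. The ``higher-dimensional Denjoy--Wolff-type fixed-point argument'' you invoke for the $q$-dimensional fibers with $q \geq 2$ is not a standard result; Denjoy--Wolff theory in several complex variables is delicate, and neither you nor the paper identifies the biholomorphism type of those fibers or cites an applicable theorem. There is also a secondary problem with the pre-compactness step: when $\sigma_{f, P_f}$ happens to be an elliptic rotation of finite order, \emph{every} point of $\T_{P_f}$ is periodic, so there is no reason the projection $t_{A, P_f}(\tau)$ of your given $\sigma_{f, A}^{\circ m}$-fixed point should equal the base fixed point $\mu$ you selected---and hence no reason the induction is anchored in a fiber that actually contains a point with bounded $\sigma_{f, \cdot}$-orbit.

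The paper sidesteps all of this with a short trick: once $\mu \in \T_{P_f}$ is found exactly as you do, it applies Proposition~\ref{prop: fixed points in fibers} not to $f$ but to $f^{\circ m}$, which \emph{is} realized by hypothesis (and for which $P_f$ is still a pseudo-invariant set containing $S_{f^{\circ m}}$). That proposition then yields a \emph{unique} fixed point $\tau$ of $\sigma_{f^{\circ m}, A}$ in the fiber $t_{A, P_f}^{-1}(\mu)$. Since this fiber is $\sigma_{f, A}$-invariant by Proposition~\ref{prop: dependence} and $\sigma_{f, A}(\tau)$ is also a fixed point of $\sigma_{f^{\circ m}, A} = \sigma_{f, A}^{\circ m}$ lying in the same fiber, uniqueness forces $\sigma_{f, A}(\tau) = \tau$. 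No tower induction, no higher-dimensional fixed-point theorem, and no special handling of cycles is required.
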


\begin{proof}
    For item \eqref{it: realizability and iterate}, we can assume that $|A| \geq 3$ due to item \eqref{it: three points} of Proposition \ref{prop: small set}.
    Suppose the Thurston map $f\colon (S^2, A) \rto$ is realized. According to Proposition~\ref{prop: fixed point of sigma}, the pullback map $\sigma_{f, A}$ has a fixed point $\tau \in \T_A$. By Proposition \ref{prop: functoriality}, $\tau$ is also a fixed point of $\sigma_{f^{\circ m}, A}$, since $\sigma_{f^{\circ m}, A} = (\sigma_{f, A})^{\circ m}$. From Proposition \ref{prop: fixed point of sigma}, we conclude that $f^{\circ m} \colon (S^2, A) \rto$ is also realized. 
    
    Now, suppose that $f^{\circ m} \colon (S^2, A) \rto$ is realized. As previously discussed, this implies the existence of a periodic point of period $m$ for the pullback map $\sigma_{f, A}$. If $f$ is not a $(2, 2, 2, 2)$-map, then by item \eqref{it: not 2,2,2,2 case} of Proposition \ref{prop: propeties of pullback map}, every periodic point of $\sigma_{f, A}$ must be fixed, and the desired result follows from Proposition~\ref{prop: fixed point of sigma}. 
    
    Suppose that $f$ is a $(2, 2, 2, 2)$-map. As discussed in Section~\ref{subsec: teichmuller and moduli spaces}, the Teichm\"uller space $\T_{P_f}$ is biholomorphic to the unit disk. Since $\sigma_{f, P_f}$ has a periodic point by Proposition~\ref{prop: dependence}, then Schwarz's lemma and the classification of automorphisms of $\D$ imply that $\sigma_{f, P_f}$ must have a fixed point $\mu$. Clearly, $\mu$ is also a fixed point of $\sigma_{f^{\circ m}, P_f}$. Therefore, according to Proposition~\ref{prop: fixed points in fibers}, $\sigma_{f^{\circ m}, A}$ has a unique fixed point $\tau$ in $t_{A, P_f}^{-1}(\mu)$. Note that both $\tau$ and $\sigma_{f, A}(\tau)$ are fixed points of $\sigma_{f^{\circ m}, A}$. Moreover, $\sigma_{f, A}(\tau) \in t_{A, P_f}^{-1}(\mu)$, since $t_{A, P_f}^{-1}(\mu)$ is $\sigma_{f, A}$-invariant according to Proposition~\ref{prop: dependence}.
    Therefore, $\tau$ and $\sigma_{f, A}(\tau)$ must coincide, and item~\eqref{it: realizability and iterate} follows from Proposition~\ref{prop: fixed point of sigma}.
    

    Suppose $f^{\circ m} \colon (S^2, A) \rto$ has a Levy cycle $\mathcal{G} = \{\gamma_1, \gamma_2, \dots, \gamma_r\}$. Let $\sigma$ denote the corresponding cyclic permutation of the set $\{1, 2, \dots, r\}$, and for each $i = 1, 2, \dots, r$, let $\widetilde{\gamma}_i \subset f^{-m}(\gamma_i)$ be a simple closed curve such that $\widetilde{\gamma}_i$ is homotopic in $S^2 - A$ to $\gamma_{\sigma(i)}$ and $\deg(f^{\circ m}|\widetilde{\gamma}_i \colon \widetilde{\gamma}_i \to \gamma_i) = 1$. Denote $\gamma_{i, k} := f^{\circ (m - k)}(\widetilde{\gamma}_i)$ for each $i = 0, 1, \dots m - 1$. In particular, $\gamma_{i, 0} = \gamma_i$. It is clear that each $\gamma_{i, k}$ must be an essential curve in $S^2 - A$, as $\widetilde{\gamma}_i$ would not be essential otherwise. It is straightforward to verify that $$\mathcal{F} := \{\gamma_{i, k}: i = 1, 2, \dots, r \text{ and } k = 0, 1, \dots, m-1\}$$ provides a Levy cycle for the map $f \colon (S^2, A) \rto$. Similarly, if $\mathcal{G}$ is a degenerate Levy cycle for $f^{\circ m}\colon (S^2, A) \rto$, then $\mathcal{F}$ is a degenerate Levy cycle for $f \colon (S^2, A) \rto$, and this establishes item \eqref{it: descending from iterate}.

    Suppose that additionally there exist $\gamma \in \mathcal{G}$ and $\tau \in \T_A$ such that $l_{\gamma}(\tau) < \ell^* e^{-(mr - 1)d_0}$. Let us choose a minimal Levy cycle $\mathcal{F}'$ such that $\gamma \in \mathcal{F}'$ and $\mathcal{F}' \subset \mathcal{F}$. With this, item \eqref{it: descedning from iterate for multicurves} follows from Proposition \ref{prop: levy cycles to levy multicurves} applied to the minimal Levy cycle $\mathcal{F}'$.

    Finally, item \eqref{it: passing to iterate} can be easily derived from the definition of a Levy cycle (degenerate Levy cycle, Levy multicurve, degenerate Levy multicurve, respectively); see Section \ref{subsec: obstructions}.
\end{proof}

\subsection{Realizability question}\label{subsec: thurston maps with extra marked points}

In this section, we investigate the properties of a Thurston map $f \colon (S^2, A) \rto$ under the assumption that the Thurston map $f \colon (S^2, B) \rto$ is realized, where $B \subset A$ is some $f$-pseudo-invariant subset  containing~$S_f$. In particular, we establish Main Theorem \ref{mainthm A}. We begin by proving a result that demonstrates that Levy cycles for the map $f \colon (S^2, A) \rto$ must exhibit particularly nice properties.

\begin{proposition}\label{prop: forgetting points and levy cycles}
    Let $f \colon (S^2, A) \rto$ be a Thurston map, and let $B \subset A$ be an $f$-pseudo-invariant subset containing $S_f$. Suppose that the Thurston map $f \colon (S^2, B) \rto$ is realized, and the Thurston map $f \colon (S^2, A) \rto$ has a Levy cycle $\mathcal{G}$. Then $\mathcal{G}$ is degenerate, and every $\gamma \in \mathcal{G}$ is non-essential in $S^2 - B$.
\end{proposition}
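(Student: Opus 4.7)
For the non-essentiality claim, I would argue by contradiction. Let $\widetilde{\gamma}_i \subset f^{-1}(\gamma_i)$ be the lifts from Definition \ref{def: levy cycle} and $\sigma$ the associated cyclic permutation. Suppose the set $I := \{i : \gamma_i \text{ is essential in } S^2 - B\}$ is nonempty, and observe that $\sigma(I) \subset I$: if $\gamma_{\sigma(i)} \in I$, then $\widetilde{\gamma}_i$, being homotopic in $S^2 - A \subset S^2 - B$ to $\gamma_{\sigma(i)}$, is essential in $S^2 - B$, and by the lifting property recorded at the start of Section \ref{subsec: obstructions} applied with $B$ in both the domain and codomain role (legitimate since $B \supset S_f$ is $f$-pseudo-invariant, so $B \subset \overline{f^{-1}(B)}$), its projection $\gamma_i$ must also be essential in $S^2 - B$. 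Cyclicity of $\sigma$ then forces $I = \{1,\ldots,r\}$, and the collection $\mathcal{G}$ together with the same curves $\widetilde{\gamma}_i$ directly satisfies the definition of a Levy cycle for $f \colon (S^2, B) \rto$, contradicting Proposition \ref{prop: levy cycles are obstructions} together with the assumed realizability of $f \colon (S^2, B) \rto$.

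For the degeneracy claim, I would choose $D_i$ to be the open Jordan region bounded by $\gamma_i$ with $|D_i \cap B| \leq 1$, and let $\widetilde{D}_i$ be the connected component of $f^{-1}(D_i)$ adjacent to the correct side of $\widetilde{\gamma}_i$, identified by lifting a small annular neighborhood of $\gamma_i$ through the degree-$1$ restriction $f|\widetilde{\gamma}_i$. Since $S_f \subset B$ and omitted values of $f$ are always asymptotic (hence in $S_f$), the region $D_i$ contains no critical or omitted values outside the single possible point of $D_i \cap B$, so $f \colon f^{-1}(D_i - B) \to D_i - B$ is a covering map onto a space with $\pi_1$ either trivial or $\Z$ (generated by $\gamma_i$). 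The fact that $\widetilde{\gamma}_i$ projects with degree $1$ forces the covering restricted to the component $\widetilde{D}_i - f^{-1}(B)$ to have degree $1$, hence to be a homeomorphism onto $D_i - B$; this extends to a homeomorphism $f \colon \widetilde{D}_i \to D_i$, giving degree $1$. To identify $\widetilde{D}_i$ homotopically with $D_{\sigma(i)}$ in $S^2 - A$, I would extend the homotopy $\widetilde{\gamma}_i \sim \gamma_{\sigma(i)}$ to an ambient isotopy $\phi$ of $S^2$ fixing $A$ (hence $B$) pointwise; this preserves the $B$-content of each side, and running the inequality $|\widetilde{D}_i \cap B| \leq |D_i \cap B|$ (which follows from the injectivity of $f|\widetilde{D}_i$) around the $\sigma$-cycle yields equality, pinning down $\phi(\widetilde{D}_i) = D_{\sigma(i)}$ rather than its complement.

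The principal obstacle I foresee is the peripheral sub-case $|D_i \cap B| = 1$ when the marked point is a critical value of $f$: there the homeomorphism property of $f \colon \widetilde{D}_i \to D_i$ is not immediate from the simply connected case, but requires the degree-$1$ constraint on $\widetilde{\gamma}_i$ to propagate through a Riemann--Hurwitz calculation forcing the preimage of that critical value inside $\widetilde{D}_i$ to be unbranched. A secondary subtlety is that $D_i$ may contain essential singularities of $f$ that do not lie in $B$; these are benign because they are points at which $f$ is undefined rather than singular values, so they do not disturb the covering structure of $f$ over $D_i - B$.
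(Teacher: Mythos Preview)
Your proposal is correct and follows essentially the same strategy as the paper: first propagate essentiality in $S^2 - B$ around the cycle to obtain a Levy cycle for $f\colon(S^2,B)\rto$ and a contradiction with Proposition~\ref{prop: levy cycles are obstructions}, then pick $D_i$ to be the side of $\gamma_i$ containing at most one point of $B$, use covering theory over $D_i - B$ to get degree~$1$ on the adjacent component, and finish with an ambient isotopy rel.\ $A$. Two small remarks: the paper makes the split $|B|\geq 3$ versus $|B|\leq 2$ explicit, and in the former case the inequality $|\widetilde D_i\cap B|\leq 1 < |B|-1$ already pins down the correct side without ``running around the cycle''; and your closing comment about essential singularities lying in $D_i$ is misplaced, since essential singularities live in the domain---the genuine concern (which the paper handles by citing an external covering lemma) is that the degree-$1$ component of $f^{-1}(D_i)$ adjacent to $\widetilde\gamma_i$ fills out an entire Jordan side of $\widetilde\gamma_i$, i.e., that the single preimage of $q_i$ it encloses is a point of the domain rather than an essential singularity.
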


\begin{proof}
    Let $\mathcal{G} = \{\gamma_1, \gamma_2, \dots, \gamma_r\}$, and let $\sigma$ denote the corresponding cyclic permutation of the set $\{1, 2, \dots, r\}$. Then, for each $i = 1, 2, \dots, r$, there exists a simple closed curve $\widetilde{\gamma}_i \subset f^{-1}(\gamma_i)$ such that $\widetilde{\gamma}_i$ is homotopic in $S^2 - B$ to $\gamma_{\sigma(i)}$ and $\deg(f|\widetilde{\gamma}_i \colon \widetilde{\gamma}_i \to \gamma_i) = 1$.

    Suppose that some $\gamma_j$ is essential in $S^2 - B$. Then $\widetilde{\gamma}_{\sigma^{-1}(j)}$ is also essential in $S^2 - B$, since it is homotopic in $S^2 - A$ to $\gamma_j$. This implies that $\gamma_{\sigma^{-1}(j)}$ is essential in $S^2 - B$. Repeatedly applying this argument, we show that each $\gamma_i \in \mathcal{G}$ is essential in $S^2 - B$, and therefore, $\mathcal{G}$ is a Levy cycle for the Thurston map $f\colon (S^2, B) \rto$. However, this leads to a contradiction with Proposition~\ref{prop: levy cycles are obstructions}.

    Now, let us demonstrate that $\mathcal{G}$ forms a degenerate Levy cycle for $f \colon (S^2, A) \rto$. First, we suppose that $|B| \geq 3$. For each $i=1, 2, \dots, r$, since $\gamma_i$ is non-essential in $S^2 - B$, we can select the unique connected component $D_i$ of $S^2 - \gamma_i$ that contains at most one point of $B$. Similarly, we introduce $\widetilde{D}_i$ as the unique connected component of $S^2 - \widetilde{\gamma}_i$ that contains at most one point of $B$.

    Suppose that some $D_i$ contains a single point $q_i$ from the set $B$. Since $S_f \subset B$, by the classical theory of covering maps (see, for instance, \cite[Propositions 2.11 and 2.12]{our_approx}), there exists a unique connected component $U_i$ of $S^2 - \widetilde{\gamma}_i$ that contains a single point $p_i$ of $\overline{f^{-1}(B)} \supset B$. Moreover, $f(p_i) = q_i$, and $f|U_i - \{p_i\} \colon U_i - \{p_i\} \to D_i - \{q_i\}$ is a covering map of degree $\deg(f|\widetilde{\gamma}_i \colon \widetilde{\gamma}_i \to \gamma_i) = \deg(f, p_i)$. Clearly, $U_i$ must coincide with $\widetilde{D}_i$. Since $\deg(f|\widetilde{\gamma}_i \colon \widetilde{\gamma}_i \to \gamma_i) = 1$, we also have $\deg(f|\widetilde{D}_i \colon \widetilde{D}_i \to D_i) = 1$. Similarly, when $D_i \cap B = \emptyset$, we can also conclude that $f(\widetilde{D}_i) = D_i$ and $\deg(f|\widetilde{D}_i \colon \widetilde{D}_i \to D_i) = 1$.

    Finally, it is easy to verify that $D_{\sigma(i)}$ and $\widetilde{D}_i$ are homotopic in $S^2 - A$. Indeed, there exists an isotopy $(\varphi_t)_{t \in \I}$ in $S^2$ rel.\ $A$ with $\varphi_0 = \id_{S^2}$ and $\varphi_1(\partial D_{\sigma(i)}) = \varphi_1(\gamma_{\sigma(i)}) = \widetilde{\gamma}_i = \partial \widetilde{D}_i$ (\cite[Theorem A.3]{BuserGeometry}; see also \cite[Sections 1.2.5 and 1.2.6]{farb_margalit}). Since both $D_{\sigma(i)}$ and $\widetilde{D}_i$ contain at most one point of $B$ and $|B| \geq 3$, we have $\varphi_1(D_{\sigma(i)}) = \widetilde{D}_i$. Thus, $D_{\sigma(i)}$ and $\widetilde{D}_i$ are homotopic in $S^2 - A$, implying that $\mathcal{G}$ is a degenerate Levy cycle for the Thurston map $f \colon (S^2, A) \rto$.
    
    In the case when the set $B$ contains fewer then three points, we can use Proposition \ref{prop: small set} to prove that $\mathcal{G}$ is a degenerate Levy cycle for the Thurston map $f \colon (S^2, A) \rto$. In particular, open Jordan regions $D_i$ and $\widetilde{D}_i$ can be defined as the unique connected components of $S^2 - \gamma_i$ and $S^2 - \widetilde{\gamma}_i$, respectively, that contain no points of the set~$B$.
\end{proof}

A point $z \in \widehat{\C}$ is called a \textit{fixed point} of a holomorphic map $g \colon U \to V$, where $U$ and $V$ are domains of $\widehat{\C}$, if either $z \in U$ and $g(z) = z$, or $z$ is an isolated removable singularity of $g$ and, after extending $g$ holomorphically to a neighborhood $z$, we have $g(z) = z$. The notion of a repelling fixed point can be generalized in a similar~way.

Before proceeding, we present a result regarding the dynamics of holomorphic self-maps of the unit disk that satisfy certain additional conditions \cite[Theorem 3.10]{small_postsingular_set}.

\begin{theorem}\label{thm: iteration on unit disk}
    Let $h \colon \D \to \D$ be a holomorphic map, and $\pi \colon \D \to V$ and $g \colon U \to V$ are holomorphic covering maps, where $U \subset V$ is a domain of $\widehat{\C}$ and $V = \widehat{\C} - P$ with $3 \leq |P| < \infty$. Suppose that $\pi(h(\D)) \subset U$ and $\pi = g \circ \pi \circ h$, i.e., the following diagram commutes:
    \begin{center}  
        \begin{tikzcd}
            \D \arrow[r, "h"] \arrow[d, "\pi"] & h(\D) \arrow [d, "\pi"]\\
            V & U \arrow[l, "g"]
        \end{tikzcd}
    \end{center}
    If the map $g \colon U \to V$ is non-injective, then exactly one of the following two possibilities is~satisfied:
    \begin{enumerate}
        \item for every $z \in \D$, the $h$-orbit of $z$ converges to the unique fixed point of $h$, or

        \item the sequence $(\pi(h^{\circ n}(z)))$ converges to the same repelling fixed point $x \in P$ of the map~$g$, regardless of the choice of $z \in \D$.
    \end{enumerate}
\end{theorem}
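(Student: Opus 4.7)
My plan is to prove the theorem in three broad stages: establish a strict hyperbolic contraction for $h$, invoke the Denjoy-Wolff theorem to obtain the dichotomy, and study the boundary case by treating $(\pi(h^{\circ n}(z)))$ as a backward $g$-orbit whose limit must be a repelling fixed point in $P$.

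First, since $g$ is non-injective and $V = \widehat{\C} - P$ has strictly negative Euler characteristic $2 - |P|$, Riemann-Hurwitz forbids any non-trivial self-covering of $V$, so $U \subsetneq V$. The identity $\pi = g \circ \pi \circ h$ means that $\pi \circ h \colon \D \to U$ is a lift through $g$ of the universal cover $\pi \colon \D \to V$, and a standard path-lifting argument shows that $\pi \circ h$ is itself a holomorphic covering of $U$ by $\D$, hence a local isometry from $(\D, \rho_\D)$ to $(U, \rho_U)$. Combined with the fact that $\pi$ is a local isometry from $(\D, \rho_\D)$ to $(V, \rho_V)$, an infinitesimal computation gives
$$|h'(z)| \, \rho_\D(h(z)) \;=\; \rho_\D(z) \cdot \frac{\rho_V(\pi(h(z)))}{\rho_U(\pi(h(z)))},$$
and Schwarz--Pick applied to the strict inclusion $U \hookrightarrow V$ shows that the right factor is strictly less than $1$. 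Therefore $h$ is strictly distance-decreasing in $\rho_\D$, and in particular is not an automorphism of $\D$.

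By the Denjoy-Wolff theorem, either $h$ has a fixed point in $\D$ (which, by the strict contraction, is unique and attracts every orbit, yielding alternative (1)), or there is a unique Denjoy-Wolff point $\xi \in \partial \D$ with $h^{\circ n}(z) \to \xi$ for every $z \in \D$. In the second case, fix $z \in \D$ and set $w_n := \pi(h^{\circ n}(z)) \in V$; applying $\pi$ to the defining relation yields $g(w_{n+1}) = w_n$, so $(w_n)$ is a specific backward $g$-orbit of $\pi(z)$ determined by the lift $h$.

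The core of the boundary case is to show that $(w_n)$ converges in $\widehat{\C}$ to a point $x \in P$ that is a repelling fixed point of $g$. I would first argue that any cluster value of $(w_n)$ in $\widehat{\C}$ must lie in $P$ rather than in $V$: if $w_{n_k} \to q \in V$, then $h^{\circ n_k}(z)$ lies eventually in sheets $S_{\gamma_k}$ of a relatively compact, evenly covered neighborhood $\pi^{-1}(N)$ of $q$, and combining $h^{\circ n}(z) \to \xi \in \partial \D$ with the bounded $\rho_\D$-step size of $h$ (from the $1$-Lipschitz property) against the deck-group-invariant geometry of the sheet decomposition produces a contradiction. This pigeonhole over deck translates and the precise control of how the sheets cluster at $\xi$ is the principal technical obstacle. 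Given a cluster value $x \in P$, compactness of $\widehat{\C}$ and the relation $g(w_{n+1}) = w_n$ allow me to extract a subsequence along which $w_{n_k + j}$ converges for every $j \geq 0$ to a forward orbit $x = x_0, x_1, x_2, \ldots$ obtained by iterating a local branch of $g$ near $x$; the only way for this orbit to remain near $x$ is for $g$ to extend holomorphically across $x$ with $g(x) = x$, and the only way for $(w_n)$ itself to return to arbitrarily small neighborhoods of $x$ along the backward orbit is to have $|g'(x)| > 1$. At such a repelling fixed point the corresponding local inverse branch of $g$ is a strict contraction toward $x$, so once $(w_n)$ enters a sufficiently small neighborhood of $x$ it is trapped there and converges to $x$, simultaneously yielding convergence and uniqueness of the limit; independence from $z$ is then immediate, since $\xi$ does not depend on $z$.
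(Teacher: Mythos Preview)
The paper does not actually prove this theorem: it is quoted verbatim from \cite[Theorem~3.10]{small_postsingular_set} and used as a black box in the proof of Main Theorem~\ref{mainthm A}. So there is no ``paper's own proof'' to compare against; I can only assess your sketch on its own merits.

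Your first two stages are fine. The Riemann--Hurwitz argument giving $U\subsetneq V$, the computation showing $h$ is a strict hyperbolic contraction (hence not an automorphism), and the Denjoy--Wolff dichotomy are all standard and correct, and they cleanly isolate the interesting case $h^{\circ n}(z)\to\xi\in\partial\D$.

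The third stage, however, has a real gap at exactly the point you flag as ``the principal technical obstacle''. Your proposed contradiction for a hypothetical cluster value $q\in V$ does not work as written: the fact that $h^{\circ n}(z)\to\xi$ while the hyperbolic step $d_\D(h^{\circ n}(z),h^{\circ(n+1)}(z))$ stays bounded does \emph{not} prevent $\pi(h^{\circ n_k}(z))$ from returning to a fixed compact $N\subset V$ along a subsequence. The preimage $\pi^{-1}(N)$ is a disjoint union of sheets of uniformly bounded hyperbolic diameter, and such sheets accumulate at every point of $\partial\D$, including $\xi$; an orbit marching toward $\xi$ in bounded steps can perfectly well pass through infinitely many of them. ``Pigeonhole over deck translates'' does not by itself produce a contradiction, and you have not said what additional ingredient closes the argument. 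Likewise, your passage from ``$x$ is a cluster value and $g(w_{n+1})=w_n$'' to ``$g$ extends holomorphically across $x$ with $g(x)=x$'' skips a step: a priori $x\in P$ could be an essential singularity of $g$ (recall $g$ is only assumed to be a covering $U\to V$ with $U\subset V$), and one needs an argument for why the particular backward branch selected by $h$ forces the removable case.

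In short: the architecture is right, but the heart of the proof---ruling out interior accumulation and pinning down the nature of the boundary limit---is asserted rather than argued. Since the paper imports the result wholesale, you would need to consult \cite{small_postsingular_set} (or reproduce its argument) to fill these gaps.
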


Now we are ready to prove Main Theorem \ref{mainthm A} from the introduction.

\begin{proof}[Proof of Main Theorem \ref{mainthm A}]
    As stated in Proposition \ref{prop: levy cycles are obstructions}, if the Thurston map $f \colon (S^2, A) \rto$ has a Levy cycle, then $f \colon (S^2, A) \rto$ is obstructed, establishing one of the directions of the desired result. Therefore, from this point, we assume that $f \colon (S^2, A) \rto$ is obstructed.
    According to Proposition \ref{prop: forgetting points and levy cycles}, it remains to prove that $f \colon (S^2, A) \rto$ has a Levy multicurve.
    
    At the same time, items~\eqref{it: realizability and trivial marked points} and~\eqref{it: realizability and levy cycles} of Proposition~\ref{prop: trivial marked points} allow us to assume that the set $C : = A - B$ contains no trivial marked points, meaning that every point $c \in C$ is periodic. We will focus on the case when every point $c \in C$ is fixed under $f$. We aim to show that for any such obstructed Thurston map $f \colon (S^2, A) \rto$ (satisfying the assumptions of this theorem) and for every $\varepsilon > 0$, there exists a Levy cycle~$\mathcal{G}$, a curve $\gamma \in \mathcal{G}$, and a point $\tau$ in the Teichm\"uller space $\T_A$ such that $l_{\gamma}(\tau) < \varepsilon$. Note that all marked points in $C$ can be made fixed by passing to an appropriate iterate of the original Thurston map. Therefore, items \eqref{it: realizability and iterate} and~\eqref{it: descedning from iterate for multicurves} of Proposition \ref{prop: passing to iterate} imply that the scenario described above is sufficient to conclude the desired result in the general case. Since under these assumptions, every point $c \in C$ is fixed under $f$, we can suppose without loss of generality that there is no point $c \in C$ such that the Thurston map $f \colon (S^2, B \cup \{c\}) \rto$ is realized. This assumption and item \eqref{it: three points} of Proposition \ref{prop: small set} also ensure that $|B| \geq 3$.

    According to Proposition \ref{prop: fixed point of sigma}, there exists a fixed point $\mu = [\varphi_1]_B = [\psi_1]_B \in \T_{B}$ of the pullback map $\sigma_{f, B}$. We assume that the representatives $\varphi_1, \psi_1 \colon S^2 \to \widehat{\C}$ are chosen such that $\varphi_1$ and $\psi_1$ are isotopic rel.\ $B$ (in particular, $\varphi_1|B = \psi_1|B$) and $g := \varphi_1 \circ f \circ \psi_1^{-1} \colon (\widehat{\C}, P) \rto$ is a postsingularly finite holomorphic map, where $P := \varphi_1(B) = \psi_1(B)$.

    Now, let $\tau_1 = [\varphi_1]_{A} \in \T_{A}$ and define $\tau_n := [\varphi_n]_{A} = \sigma_{f, A}^{\circ (n - 1)}(\tau_1)$ for all $n \in \N$, where the representatives $\varphi_n \colon S^2 \to \widehat{\C}$ are chosen such that $\varphi_n|B = \varphi_1|B$ and the map $\varphi_n \circ f \circ \varphi_{n + 1}^{-1} \colon \widehat{\C} \dto \widehat{\C}$ is holomorphic. Proposition \ref{prop: dependence} ensures that such choice of representatives is possible and implies that $\varphi_n \circ f \circ \varphi_{n + 1}^{-1} = g$ for all $n \in \N$. Let $C = \{c_1, c_2, \dots, c_l\}$, and define $x_{n, i} := \varphi_n(c_i)$ for each $n \in \N$ and $i = 1, 2, \dots, l$. 

    \begin{claim}
        For each $i=1,2,\dots, l$, the sequence $(x_{n, i})$ converges to a limit $x_i \in P$, which is a repelling fixed point of the map $g$.
    \end{claim}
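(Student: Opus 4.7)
Fix $i \in \{1, 2, \dots, l\}$ and set $B_i := B \cup \{c_i\}$. The plan is to treat each coordinate $c_i$ separately by applying the one-dimensional machinery of Proposition \ref{prop: extra marked point} and Theorem \ref{thm: iteration on unit disk} to the intermediate Thurston map $f \colon (S^2, B_i) \rto$. Since $c_i$ is fixed by $f$ and $B$ is $f$-pseudo-invariant, $B_i$ is also $f$-pseudo-invariant and contains $S_f$. By the standing assumption that no point $c \in C$ makes $f \colon (S^2, B \cup \{c\}) \rto$ realizable, the Thurston map $f \colon (S^2, B_i) \rto$ is obstructed; hence by Proposition \ref{prop: fixed point of sigma} the pullback map $\sigma_{f, B_i}$ has no fixed point in the Teichm\"uller space $\T_{B_i}$.

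Next, I would invoke Proposition \ref{prop: extra marked point} applied to $f$, the marked set $B$, the fixed point $c_i$, and the representatives $\varphi_1, \psi_1$ of $\mu$. This furnishes a properly and holomorphically embedded unit disk $\Delta_i := t_{B_i, B}^{-1}(\mu) \subset \T_{B_i}$, a holomorphic covering $\pi_i \colon \Delta_i \to \widehat{\C} - P$ given by $\pi_i([\varphi]) = \varphi(c_i)$ for representatives with $\varphi|B = \varphi_1|B$, and a commutative diagram $\pi_i = (g|W_i) \circ \omega_i$ where $W_i := \widehat{\C} - \overline{g^{-1}(P)}$ and $\omega_i := \pi_i \circ \sigma_{f, B_i}|\Delta_i$. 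I would then push the orbit $(\tau_n)$ down to $\Delta_i$ by setting $\tau_n^{(i)} := t_{A, B_i}(\tau_n)$; the identity $t_{B_i, B} \circ t_{A, B_i} = t_{A, B}$ together with the fact that $t_{A, B}(\tau_n) = \mu$ for all $n$ (which follows from Proposition \ref{prop: dependence} and the choice $\varphi_n|B = \varphi_1|B$) ensures $\tau_n^{(i)} \in \Delta_i$. Another application of Proposition \ref{prop: dependence} gives $\tau_{n+1}^{(i)} = \sigma_{f, B_i}(\tau_n^{(i)})$, and by definition of $\pi_i$ we obtain $\pi_i(\tau_n^{(i)}) = \varphi_n(c_i) = x_{n, i}$.

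With these identifications in hand, the final step is to apply Theorem \ref{thm: iteration on unit disk} to $h := \sigma_{f, B_i}|\Delta_i$, the covering $\pi_i$, and the covering $g|W_i \colon W_i \to \widehat{\C} - P$, after identifying $\Delta_i$ biholomorphically with $\D$. The map $g|W_i$ is non-injective because $g$ has topological degree at least $2$ (as $f$ is a Thurston map). Since $\sigma_{f, B_i}$ has no fixed point in $\T_{B_i}$, its restriction $h$ has no fixed point in $\Delta_i$, so alternative~(1) of Theorem \ref{thm: iteration on unit disk} is ruled out. Therefore alternative~(2) must hold, which means the sequence $(\pi_i(h^{\circ(n - 1)}(\tau_1^{(i)}))) = (x_{n, i})$ converges to a repelling fixed point $x_i \in P$ of $g$, independently of the starting point $\tau_1^{(i)} \in \Delta_i$.

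The argument is essentially a bookkeeping exercise once the two heavy tools, Proposition \ref{prop: extra marked point} and Theorem \ref{thm: iteration on unit disk}, are invoked. The main technical point to verify carefully is that the projection $t_{A, B_i}$ intertwines $\sigma_{f, A}$ with $\sigma_{f, B_i}$ and lands inside the fiber $\Delta_i$, so that the chosen representatives $\varphi_n$ (which were fixed once and for all on $B$) give consistent representatives of the projected points in $\Delta_i$ and yield $\pi_i(\tau_n^{(i)}) = x_{n, i}$ on the nose. Once this compatibility is established, the dichotomy in Theorem \ref{thm: iteration on unit disk} immediately produces the desired limit and its repelling nature.
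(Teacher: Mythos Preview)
Your proposal is correct and follows essentially the same route as the paper's proof: both introduce the intermediate marked set $B \cup \{c_i\}$ (the paper calls it $A_i$), invoke Proposition~\ref{prop: extra marked point} to obtain the invariant disk $\Delta_i = t_{B_i, B}^{-1}(\mu)$ with its covering $\pi_i$, verify that the projected orbit $([\varphi_n]_{B_i})$ lies in $\Delta_i$ and satisfies $\pi_i([\varphi_n]_{B_i}) = x_{n,i}$, and then apply Theorem~\ref{thm: iteration on unit disk}, ruling out the fixed-point alternative via the obstruction hypothesis on $f\colon(S^2, B_i)\rto$. The only differences are cosmetic: your $W_i$ does not actually depend on $i$, and the paper works directly with $[\varphi_n]_{A_i}$ rather than naming the projection $t_{A,B_i}$ explicitly.
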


    \begin{subproof}
        Let $A_i := B \cup \{c_i\}$ for each $i = 1, 2, \dots, l$. Since $f(c_i) = c_i$, we can study the Thurston map $f \colon (S^2, A_i) \rto$. Specifically, we consider the $\sigma_{f, A_i}$-orbit of the point $[\varphi_1]_{A_i}$ in the Teichm\"uller space $\T_{A_i}$. By Proposition \ref{prop: dependence}, this orbit coincides with the sequence $([\varphi_n]_{A_i})$.

        Define $\Delta_i := t_{A_i, B}^{-1}(\mu)$, which, according to Proposition \ref{prop: dependence}, is a $\sigma_{f, A_i}$-invariant subset of $\T_{A_i}$. Also, let $\pi_i \colon \Delta_i \to \widehat{\C} - P$ be defined by $\pi_i([\varphi]_{A_i}) = \varphi(c_i)$, where the representative $\varphi \colon S^2 \to \widehat{\C}$ is chosen so that $\varphi|B = \varphi_1|B$, and let $\omega_i = \pi_i \circ \sigma_{f, A_i}|\Delta_i$. Recall that these objects were discussed in Section \ref{subsec: extra fixed point}. Additionally, we notice that $[\varphi_n]_{A_i} \in \Delta_i$ for all $n\in \N$ and $i = 1, 2, \dots, l$.

        According to item \eqref{it: disk} of Proposition \ref{prop: extra marked point}, $\Delta_i$ is biholomorphic to the unit disk, and by item~\eqref{it: diag} of the same proposition, we have $\pi_i(\sigma_{f, A_i}(\Delta_i)) = W$, where $W = \widehat{\C} - \overline{g^{-1}(P)}$, and $\pi_i = g \circ \pi_i \circ \sigma_{f, A_i}$. Since $g$ is not injective, items \eqref{it: coverings} and \eqref{it: non auto sigma map} of Proposition \ref{prop: extra marked point} allow us to apply Theorem~\ref{thm: iteration on unit disk} to the map $\sigma_{f, A_i}|\Delta_i$. Since the Thurston map $f \colon (S^2, A_i) \rto$ is obstructed, $\sigma_{f, A_i}|\Delta_i$ has no fixed points, as stated in Proposition \ref{prop: fixed point of sigma}. Therefore, by Theorem~\ref{thm: iteration on unit disk}, the sequence $x_{n, i} = \pi_i([\varphi_n]_{A_i}) = \pi_i(\sigma_{f, A_i}^{\circ(n - 1)}([\varphi_1]_{A_i}))$ converges to a point $x_i \in P$, which is a repelling fixed point of the map~$g$.
    \end{subproof}

    Let $x \in P$ be a point such that there exists a sequence $(x_{n, j})$ converging to $x$ as $n$ tends to~$\infty$. According to the previous claim, such a point $x$ always exists and must be a repelling fixed point of the map $g$. Given this, and the fact that every sequence $(x_{n, i})$ converges as $n \to \infty$, we can find an annulus $U \subset \widehat{\C}$ such that, for all sufficiently large $n$, the following conditions are satisfied:
    \begin{itemize}
        \item one of the connected components of $\widehat{\C} - U$ contains points $x$ and $x_n^{j}$, while the other connected component contains all points of $P - \{x\}$;
        \item $g^{\circ (|A| - 3)}$ is defined and injective on $U$.
    \end{itemize}
    It is clear that we can take the modulus of the annulus $U$ as large as desired, assuming that we consider $n$ large enough. Therefore, we can apply Proposition \ref{prop: finding a levy cycle} to derive the existence of a Levy cycle $\mathcal{G}$ for the Thurston map $f \colon (S^2, A) \rto$. Moreover, Proposition \ref{prop: finding a levy cycle} also shows that, by taking $\mod(U)$ sufficiently large, we can ensure that for some $n$ and any $\gamma \in \mathcal{G}$, $l_{\gamma}(\tau_n)$ is as small as we want. This completes the proof.
\end{proof}

\begin{example}
    Let $f \colon (S^2, A) \rto$ be a Thurston map, and assume that $B \subset A$ is an $f$-pseudo-invariant subset containing $S_f$.

    Now, suppose $|A| = 4$ and $|B| \leq 3$. In this case, by Proposition \ref{prop: small set}, the Thurston map $f \colon (S^2, B) \rto$ is clearly realized. Consequently, Main Theorem \ref{mainthm A} directly implies that $f \colon (S^2, A) \rto$ is realized if and only if it has no degenerate Levy cycles. Since any multicurve on the four-punctured sphere $S^2 - A$ consists of just one simple closed curve, $f \colon (S^2, A) \rto$ is obstructed if and only if it has a degenerate Levy fixed curve, confirming the result \cite[Main Theorem A]{small_postsingular_set} in this particular setting.
\end{example}

\begin{remark}
    One of the key tools of the proof of Theorem \ref{thm: iteration on unit disk} (see \cite[Theorem 3.10]{small_postsingular_set}) is the use of the hyperbolic metric on the unit disk. In the proof of Main Theorem \ref{mainthm A}, Theorem~3.10 was applied to the restrictions $\sigma_{f, A_i}|\Delta_i \colon \Delta_i \to \Delta_i$, where $\Delta_i = t_{A_i, B}^{-1}(\mu)$ is a properly and holomorphically embedded unit disk in the Teichm\"uller space $\T_{A_i}$, where $|A_i| \geq 4$ and $|A_i| - |A| = 1$. However, it is worth noting, that the hyperbolic distance on $\Delta_i$ does not coincide with the metric inherited from the Teichm\"uller metric $d_T$ on $\T_{A_i}$, except when $|A_i| = 4$ and $\Delta_i = \T_{A_i}$, due to \cite[Theorem 1]{non_isometric_disks}. The same observation applies to properly and holomorphically embedded unit disk $\Delta = t_{B_{j + 1}, B_j}^{-1}(\tau_j)$ in $\T_{B_{j + 1}}$ and the map $\sigma_{f, B_{j + 1}}|\Delta \colon \Delta \to \Delta$ from the proof of Proposition~\ref{prop: fixed points in fibers}.
\end{remark}

\bibliographystyle{alpha}
\bibliography{lib.bib}

\end{document}